\newtheoremstyle{slthm}% name
{9pt}%      Space above, empty = `usual value'
{5pt}%      Space below
{\slshape}% Body font
{}%         Indent amount (empty = no indent, \parindent = para indent)
{\bfseries}% Thm head font
{.}%        Punctuation after thm head
{.5em}%     Space after thm head: " " = normal interword space;
\newtheoremstyle{prcl}% name
{9pt}%      Space above, empty = `usual value'
{5pt}%      Space below
{\slshape}% Body font
{}%         Indent amount (empty = no indent, \parindent = para indent)
{\bfseries}% Thm head font
{.}%        Punctuation after thm head
{.5em}%     Space after thm head: " " = normal interword space;
\newtheoremstyle{prblm}% name
{9pt}%      Space above, empty = `usual value'
{5pt}%      Space below
{\rm}% Body font
{}%         Indent amount (empty = no indent, \parindent = para indent)
{\bfseries}% Thm head font
{.}%        Punctuation after thm head
{.5em}%     Space after thm head: " " = normal interword space;
\theoremstyle{slthm}
\newtheorem{thm}{Theorem}[section]
\newtheorem{lemma}[thm]{Lemma}
\newtheorem{prop}[thm]{Proposition}
\newtheorem{cor}[thm]{Corollary}
\theoremstyle{definition}
\newtheorem{df}[thm]{Definition}
\newtheorem{dfs}[thm]{Definitions}
\newtheorem{nrmk}[thm]{Remark}
\newtheorem{nrmks}[thm]{Remarks}
\newtheorem{expl}[thm]{Example}
\newtheorem{expls}[thm]{Examples}
\theoremstyle{remark}
\newtheorem*{rmk}{Remark}
\newtheorem*{rmks}{Remarks}
\theoremstyle{prcl}
\newtheorem*{prclaim}{Proclaim}
\newtheorem{nprclaim}[thm]{Proclaim}
\theoremstyle{prblm}
\newcounter{flexnummark}
\DeclareMathOperator{\mm}{\mathfrak{m}}
\DeclareMathOperator{\nn}{\mathfrak{n}}
\DeclareMathOperator{\cl}{cl}
\DeclareMathOperator{\im}{Im}
\DeclareMathOperator{\re}{Re}
\DeclareMathOperator{\supp}{supp}
\DeclareMathOperator{\ord}{ord}
\newcommand{\mdots}{\dots}
\newcommand{\rest}[1]{\!\!\upharpoonright_{#1}}
\newcommand{\into}{\longrightarrow}
\renewcommand{\hat}{\widehat}
\renewcommand{\tilde}{\widetilde}
\renewcommand{\bar}{\overline}
\def\Ind#1#2{#1\setbox0=\hbox{$#1x$}\kern\wd0\hbox to 0pt{\hss$#1\mid$\hss}
	\lower.9\ht0\hbox to 0pt{\hss$#1\smile$\hss}\kern\wd0}
\def\Notind#1#2{#1\setbox0=\hbox{$#1x$}\kern\wd0\hbox to 0pt{\mathchardef
		\nn=12854\hss$#1\nn$\kern1.4\wd0\hss}\hbox to
	0pt{\hss$#1\mid$\hss}\lower.9\ht0 \hbox to
	0pt{\hss$#1\smile$\hss}\kern\wd0}
\newcommand{\norm}[1]{\left\|#1\right\|}
\newcommand{\set}[1]{\left\{#1\right\}}
\newcommand{\NN}{\mathbb{N}}
\newcommand{\ZZ}{\mathbb{Z}}
\newcommand{\QQ}{\mathbb{Q}}
\newcommand{\RR}{\mathbb{R}}
\newcommand{\CC}{\mathbb{C}}
\newcommand{\curly}[1]{\mathcal{#1}}
\newcommand{\A}{\curly{A}}
\newcommand{\C}{\curly{C}}
\newcommand{\G}{\curly{G}}
\newcommand{\T}{\curly{T}}
\renewcommand{\mm}{\mathbf{m}}
\renewcommand{\nn}{\mathbf{n}}
\newcommand{\bo}{\curly{B}}
\newcommand{\la}{\curly{L}}
\DeclareMathOperator{\Ranexp}{\RR_{an,exp}}
\newcommand{\Ranstar}{\RR_{\text{an}^*}}
\newcommand{\Ps}[2]{\mathbb{#1}\left[\!\left[#2\right]\!\right]}
\newcommand{\Pc}[2]{\mathbb{#1}\left\{#2\right\}}
\newcommand{\realgamma}{\Gamma\rest{(0,+\infty)}}
\newcommand{\realzeta}{\zeta\rest{(1,+\infty)}}
\numberwithin{equation}{section}
\title {Multisummability for generalized power series}
\author {Jean-Philippe Rolin, Tamara Servi and Patrick Speissegger}
\address{ Institut de Math\'ematiques de Bourgogne \\
	Universit\'e de Bourgogne Franche-Comt{\'e}\\
	UMR 5584, CNRS\\
	B.P. 47870 \\
	21078 Dijon, France}
\email{jean-philippe.rolin@u-bourgogne.fr}
\address {Institut de Math\'ematiques de Jussieu -- Paris Rive Gauche \\
	Universit\'{e} Paris Cit\'{e} and Sorbonne Universit\'{e}, CNRS, IMJ-PRG, F-75013 Paris, France}
\email {servi@math.univ-paris-diderot.fr}
\address {Department of Mathematics and Statistics, McMaster University, 1280
Main Street West, Hamilton, Ontario L8S 4K1, Canada}
\email {speisseg@math.mcmaster.ca}
\date{\today\ at \currenttime}
\subjclass {Primary 40C10, 03C64, 26E10; Secondary 30D60}
\keywords {Gamma function, Zeta function, multisummability, quasianalyticity, \hbox{o-minimality}}
\thanks{The third author is supported by NSERC of Canada grant RGPIN 2018-06555. The authors would like to thank the Fields Institute for Research in Mathematical Sciences for its hospitality and financial support, as part of this work was done while at its Thematic Program on Tame Geometry and Applications. They would also like to thank the referee for many useful remarks.}
\begin{document}

\begin{abstract}
	We develop multisummability, in the positive real direction, for generalized power series with natural support, and we prove o-minimality of the expansion of the real field by all multisums of these series.  This resulting structure expands both $\RR_{\G}$ and the reduct of $\Ranstar$ generated by all convergent generalized power series with natural support; in particular, its expansion by the exponential function defines both the Gamma function on $(0,\infty)$ and the Zeta function on $(1,\infty)$.
\end{abstract}

\maketitle
\markboth{J.-P. Rolin, Tamara Servi and Patrick Speissegger}{Amalgamating Gamma and Zeta}

\section*{Introduction}

 We generalize the theory of multisummability in the positive real direction, as discussed in \cite{Balser:2000fk,Tougeron:1994fk,Dries:2000mx}, to certain non-convergent power series with real non-negative exponents (introduced in  \cite[p. 4377]{Dries:1998xr}). Examples of such series are Dirichlet series (after the change of variables $s=-\log x$), and asymptotic expansions of certain solutions of differential equations \cite{wasow:asymptotic} and of certain functions appearing in Dulac's problem \cite{ilyashenko:dulac}. 
 
 Our main motivation here comes from o-minimality: summation processes induce a quasianalyticity property which is usually needed to prove that a given structure is o-minimal. In their paper \cite{Dries:1997jl}, Van den Dries, Macintyre and Marker show that neither Euler's Gamma function $\Gamma$ restricted to $(0,+\infty)$, nor the Riemann Zeta function $\zeta$ restricted to $(1,+\infty)$, are definable in the o-minimal structure $\Ranexp$ \cite[Theorem 5.11 and Corollary 5.14]{Dries:1997jl}.  Subsequently, Van den Dries and Speissegger constructed the o-minimal expansions $(\Ranstar,\exp)$ \cite{Dries:1998xr,Dries:2000mx} and $(\RR_\G,\exp)$ \cite{Dries:2000mx}, and they proved that $\zeta\rest{(0,+\infty)}$ is definable in the former, but not in the latter \cite[Corollary 10.11]{Dries:2000mx}, while $\Gamma\rest{(0,+\infty)}$ is definable in the latter \cite[Example 8.1]{Dries:2000mx}.  At the time, it was unknown whether $\Gamma\rest{(0,+\infty)}$ was definable in the former.

This state of affairs thus left the following question unanswered: \textsl{is there an o-minimal expansion of the real field in which both $\realgamma$ and $\realzeta$ are definable?}  Based on additional information gained from Rolin and Servi's paper \cite{MR3349791} about the structures $(\Ranstar,\exp)$ and $(\RR_\G,\exp)$, we show in a separate paper (in preparation) that $\realgamma$ is not definable in $(\Ranstar,\exp)$ either.  So to answer the question in the affirmative, we need to come up with an o-minimal structure that properly expands both the expansion of the real field by $\realgamma$ and  the expansion of the real field by $\realzeta$.

Indeed, we construct here an o-minimal expansion of the real field that expands $(\RR_\G,\exp)$ and in which $\realzeta$ is definable (see the Main Corollary below). 

To recap, for an indeterminate $X = (X_1, \dots, X_n)$, we denote by $\Ps{C}{X^*}$ the set of all \textbf{generalized power series} of the form $F(X) = \sum_{\alpha \in [0,\infty)^n} a_\alpha X^\alpha$, where each $a_\alpha \in \CC$ and the \textbf{support} $$\supp(F):= \set{\alpha \in [0,\infty)^n:\  a_\alpha \ne 0}$$ is contained in a product $A_1 \times \cdots \times A_n$ of sets $A_i \subset [0,\infty)$ that are well-ordered with respect to the usual ordering of the real numbers (see \cite[Section 4]{Dries:1998xr} for details).  The series $F(X)$ \textbf{converges} if there exists $r>0$ such that $\|F\|_r:= \sum_\alpha |a_\alpha|r^\alpha < \infty$; we denote by $\Pc{C}{X^*}$ the set of all convergent generalized power series \cite[Section 5]{Dries:1998xr}.

The generalized power series that we extend the notion of multisummability to have special support: we call a set $A \subseteq \RR$ \textbf{natural} if $A \cap (-\infty,a)$ is finite, for every $a \in \RR$; and we call a set $A \subseteq \RR^n$ \textbf{natural} if $A \subseteq A_1 \times \cdots \times A_n$ with each $A_i \subseteq \RR$ natural. Restricting our
attention to generalized power series with natural support allows us to use such objects as
asymptotic expansions of germs (see Proposition \ref{gevrey_estimates}). This has already been expoited in \cite{KRS}, where the o-minimality of the expansion of the real field by certain Dulac germs is proven.

In \cref{one-variable_chapter,several-variable_chapter} below, we define a notion of multisummability in the positive real direction for generalized power series of natural support, appropriately named \textit{generalized multisummability in the positive real direction} (or simply \textit{generalized multisummability in the real direction} when working in the logarithmic chart of the Riemann surface of the logarithm, as we do throughout this paper).  We verify that the resulting system $\G^*$ of algebras (both of functions and of germs) satisfies the axioms in \cite{MR3349791}, leading to the following:  let the language $\la_{\G^*}$ and the structure $\RR_{\G^*}$ be as in \cite[Definition 1.21]{MR3349791} for our system $\G^*$ of algebras in place of $\A$ there.

\begin{prclaim}[Main Theorem]
	\begin{enumerate}
		\item The structure $\RR_{\G^*}$ is model complete, o-minimal and polynomially bounded and has field of exponents $\RR$.
		\item The structure $\RR_{\G^*}$ admits quantifier elimination in the language $\la_{\G^*} \cup \{(\cdot)^{-1}\}$.
	\end{enumerate}
\end{prclaim}

By construction, all functions defined on compact polydisks by convergent generalized power series with natural support are definable in $\RR_{\G^*}$; and we show in  \cref{one-variable_chapter} that the same holds for all functions defined on compact sets by standard power series that are multisummable in the positive real direction. Recall that, for $x\in\left[0,e^{-2}\right]$, $\zeta\left(-\log x\right)$ is the sum of the generalized power series $\sum_{n=1}^{\infty}x^{\log n}$,
which has natural support. In particular, both $\exp\rest{[0,1]}$ and $\zeta(-\log x)\rest{[0,e^{-2}]}$ are definable in $\RR_{\G^*}$, as is the function $\log\Gamma(x) - (x-\frac12)\log x$ on the interval $(1,+\infty)$ (see \cite[Example 8.1]{Dries:2000mx}).  Therefore, Theorem B of \cite{Dries:2000mx} gives:

\begin{prclaim}[Main Corollary]
	\begin{enumerate}
		\item The structure $(\RR_{\G^*},\exp)$ is model complete and o-minimal, and it admits quantifier elimination in the language $\la_{\G^*} \cup \{\exp,\log\}$. 
		\item The functions $\realgamma$ and $\realzeta$ are definable in $(\RR_{\G^*},\exp)$. \qed
	\end{enumerate}
\end{prclaim}

As we rely on \cite{MR3349791} for the proof of o-minimality of $\RR_{\G^*}$, the main contribution of this paper is the generalization of multisummability in the positive real direction to generalized power series of natural support and the establishment of the axioms in \cite{MR3349791} for the corresponding system $\G^*$ of algebras of functions and germs.   

As in \cite{Dries:2000mx}, our starting point here is a characterization, due to Tougeron \cite{Tougeron:1994fk}, of multisummable power series in terms of infinite sums of convergent power series of decreasing radii of convergence.  Thus, we move to the logarithmic chart of the Riemann surface of the logarithm, since we are working with arbitrary real exponents.  Then we define a multisummable \textit{generalized} power series (in the real direction) as the infinite sum of a sequence of convergent \textit{generalized} power series with decreasing radii of convergence and support contained in a fixed natural set (\cref{pre_gen_multi_section}).  

The corresponding theory of multisummability in one variable, developed in \cref{one-variable_chapter}, differs from the classical one in that there is no origin around which we can use contour integration.  One example of a classical result that we cannot generalize is the following: every classical multisummable power series can be decomposed into a sum of singly summable series; we do not know if this is the case in the generalized setting (see \cref{question_1} for details).  However, we do obtain the crucial quasianalyticity for our system of algebras (\cref{quasi}).

Also, as in \cite{Dries:2000mx}, this approach lends itself naturally to define generalized multisummability in the positive real direction in several variables, and we follow the corresponding steps in \cite{Dries:2000mx} as closely as possible (\cref{several-variable_chapter}).  In \cref{substitution,o-min_chapter}, we establish the axioms of \cite{MR3349791}.

\begin{rmks}
	(1) To our knowledge, this is the first time \cite{MR3349791} was used to prove the o-minimality of a structure that was previously unknown to be o-minimal.  The same procedure could be used to obtain the o-minimality (and related results) of the structures $\Ranstar$ \cite{Dries:1998xr}, $\RR_\G$ \cite{Dries:2000mx} and $\RR_\C$ \cite{MR1992825}.  The resulting quantifier elimination given by \cite[Theorem B]{MR3349791} is new in each of these cases, and it is used in our forthcoming paper to show that $\realgamma$ is not definable in $(\Ranstar,\exp)$.  
	
	(2) The only closure property needed in \cite{MR3349791} but not established in \cite{Dries:1998xr},  \cite{Dries:2000mx} or \cite{MR1992825} is closure under infinitesimal substitutions in the convergent variables (\cref{infinitesimal_prop}).  The proof of this in the structures discussed in the previous remark is similar to the proof given here for $\G^*$.
\end{rmks}

Finally, from the point of view of generalized multisummability, as in the classical theory, there is nothing special about the positive real direction.  Our generalized notion works in any other direction, and one could correspondingly come up with a notion of ``generalized multisummability'' as done in the classical situation.  This raises some interesting questions in their own right (see \cref{question_1}), which we do not address in this paper.

\section{Generalized multisummable functions of one variable} \label{one-variable_chapter}

\subsection{Preliminaries} \label{prelim1}

We denote by $$\bar\CC = \CC \cup \{-\infty\}$$ the logarithmic chart of the Riemann surface of the logarithm, with the additional ``origin'' of $\bar\CC$ represented by ``$-\infty$'', where we convene that $\re(-\infty) = -\infty$. For $\frak r \in \RR$, we let 
\begin{equation*}
	H(\frak r):= \set{u+iv \in \bar\CC:\ u < \frak r}
\end{equation*}
be the \textbf{log-disk} of \textbf{log-radius} $\frak r$.  For $d,\frak r \in \RR$, a \textbf{log-sector} is a set 
\begin{equation*}
	S(d,\frak r,\theta):= \begin{cases} \set{u+iv \in \CC:\ u < \frak r, |d-v| < \theta} \cup \{-\infty\} &\text{if } \theta \in (0,\infty), \\ H(\frak r) &\text{if } \theta = \infty, \end{cases}
\end{equation*}
and a \textbf{log-line} is a set $$T(d):= \set{u+iv \in \CC:\ v = d} \cup \{-\infty\}.$$  (We shall mainly focus on the direction $d=0$ in this paper.)  We extend the standard topology on $\CC$ to $\bar\CC$ by declaring the log-disks as basic open neighbourhoods of $-\infty$.  Note that the usual covering map of the Riemann surface of the logarithm is represented in the logarithmic chart by the exponential function, and we extend it to a continuous function on $\bar\CC$ by setting $e^{-\infty}:= 0$.  For each $d \in \RR$, the restriction of $e^w$ to $S(d,\infty,\pi) \setminus\{-\infty\}$ is injective; its inverse is the \textbf{branch of the logarithm $\log_d$ in the direction $d$}.

We are mostly interested in partial functions on $\bar\CC$ with values in $\CC$.  In this spirit, we call a set $D \subseteq \bar\CC$ a \textbf{log-domain} if $D \cap \CC$ is a domain (in particular, every domain in $\CC$ is a log-domain).  If $D \subseteq \bar\CC$ is a log-domain, a \textbf{log-holomorphic function on $D$} is a continuous function $f:D \into \CC$ such that the restriction of $f$ to $D \cap \CC$ is holomorphic.  For example, every holomorphic function on a domain in $\CC$ is log-holomorphic, and the exponential function is log-holomorphic on $\bar\CC$.

\subsection{The logarithmic Borel and Laplace transforms}\label{log_Borel_section}

\subsubsection{Logarithmic Borel transform} \label{lbt}
Let $d,\frak r \in \RR$ and $\theta > \pi/2$, and write $S = S(d,\frak r,\theta)$.  Let $f:S \into \CC$ be such that $f\rest{\bar S_0}$ is bounded and log-holomorphic, for every closed log-subsector $\bar S_0$ of $S$.
Given a closed log-subsector $\bar S_0 = \cl(S(d', \frak r',\theta'))$ of $S$ with $\theta'>\frac\pi2$, denote by $\partial \bar S_0$ the directed path following the boundary of $\bar S_0$ from the ``lower left end'' to the ``upper left end''.  We define the \textbf{logarithmic Borel transform} $\bo_{d'} f: T(d') \into \CC$ in the direction $d'$ of $f$ by
\begin{equation*}
	\bo_{d'} f(w) := \frac{e^w}{2\pi i} \int_{\partial \bar S_0} e^{e^{w-\eta}} f(\eta) \frac{d\eta}{e^\eta}.
\end{equation*}
We leave it as an exercise to check that $\bo_{d'} f$ only depends on $d'$, but not on the other parameters of $\bar S_0$ (as long as they are in the prescribed range).  More is true:

\begin{nrmk}\label{Loray_rmk}
	If $\theta' < \pi$ and $g(z):= f(\log_d z)$, for $z \in \exp(\bar S_0)$, then the change of variables $z = e^w$  gives that 
	\begin{equation*}
		\left(\bo_{d'} f\right)(\log_d z) = z \cdot \left(\bo_{d'}g\right) (z),
	\end{equation*}
	where $\bo_{d'} g$ denotes the Borel transform of $g$ in the direction $d'$ as defined in \cite{loraynotes} (see also Section 5.2 of \cite{Balser:2000fk}).  Thus, the following is obtained from Propri\'et\'es 1--3 on p. 38 of \cite{loraynotes}:
\end{nrmk}

\begin{prop}\label{Borel_prop}
	Set $S':= S\left(d,\infty,\theta-\frac\pi2\right)$.
	\begin{enumerate}
		\item The function $\bo f: S' \into \CC$ defined by $\bo f(w):= (\bo_{\im w} f)(w)$ is log-holomorphic on every closed log-subsector $\bar S_0$ of $S'$.
		\item For every closed log-subsector $\bar S_0$ of $S'$, there exist $C,D>0$ such that $$|\bo f(w)| \le C e^{De^{\re w}} \quad\text{for } w \in \bar S_0.$$
		\item Let $\alpha \ge 0$, and assume that for every closed log-subsector $\bar S_0$ of $S$, we have $|f(w)| = O\left(e^{\alpha\re w}\right)$ as $w \to -\infty$ in $\bar S_0$.  Then for every closed log-subsector $\bar S_0$ of $S'$, we have $|\bo f(w)| = O\left(e^{\alpha\re w}\right)$ as $w \to -\infty$ in $\bar S_0$.\qed
	\end{enumerate}
\end{prop}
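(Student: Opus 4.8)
The plan is to reduce everything to the classical statements on p.~38 of \cite{loraynotes} via the dictionary set up in \cref{Loray_rmk}, and then to handle the one genuinely new point --- the polynomial-type growth estimate in part (3) --- directly from the integral formula. For parts (1) and (2), I would proceed as follows. Fix a closed log-subsector $\bar S_0$ of $S'$ and pick a closed log-subsector $\bar S_1 = \cl(S(d,\frak r_1,\theta_1))$ of $S$ with $\theta_1 \in (\frac\pi2,\pi)$ whose opening still covers $\bar S_0$ after the $\frac\pi2$-shrinking; since the definition of $\bo_{d'}f$ is independent of the contour parameters, we may compute $\bo f$ on $\bar S_0$ using the single contour $\partial\bar S_1$. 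On $\exp(\bar S_1)$ set $g(z) := f(\log_d z)$, which is bounded and holomorphic there; then \cref{Loray_rmk} gives $(\bo f)(w) = (\bo_{\im w}f)(w) = e^w\,(\bo_{\im w}g)(e^w)$, so $\bo f = E^*\bigl(z\cdot(\bo g)(z)\bigr)$ where $E(w) = e^w$ and $\bo g$ is the classical Borel transform of $g$ in the appropriate family of directions. Log-holomorphy of $\bo f$ on $\bar S_0$ then follows from holomorphy of $\bo g$ on the corresponding classical sector (Propriété~1 of \cite{loraynotes}) together with log-holomorphy of $\exp$ on $\bar\CC$ and the fact that $E$ maps a neighbourhood of $-\infty$ into a neighbourhood of $0$. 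For part (2), Propriété~2 of \cite{loraynotes} furnishes constants so that $|(\bo g)(z)| \le C' e^{D'|z|}$ on the classical sector; substituting $z = e^w$ and using $|e^w| = e^{\re w}$ yields $|\bo f(w)| = |e^w|\cdot|(\bo g)(e^w)| \le C' e^{\re w} e^{D' e^{\re w}} \le C e^{D e^{\re w}}$ on $\bar S_0$ after absorbing the polynomial factor $e^{\re w}$ (bounded on $\bar S_0$ since $\re w$ is bounded above) into the constants.

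For part (3), the hypothesis is that $|f(w)| = O(e^{\alpha \re w})$ as $w\to-\infty$ in each closed log-subsector of $S$, and we want the same bound for $\bo f$. I would argue directly from
\begin{equation*}
	\bo f(w) = \frac{e^w}{2\pi i}\int_{\partial\bar S_1} e^{e^{w-\eta}} f(\eta)\,\frac{d\eta}{e^\eta},
\end{equation*}
splitting $\partial\bar S_1$ into the two rays $\{u+iv_\pm : u \le \frak r_1\}$ (with $v_\pm = d \mp \theta_1$) plus the left-hand cap near $-\infty$. On each ray parametrise by $u \in (-\infty,\frak r_1]$, so $\eta = u + iv_\pm$, $d\eta/e^\eta = e^{-u}e^{-iv_\pm}\,du$, and $|e^{w-\eta}| = e^{\re w - u}$. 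The key is the decay of the kernel $e^{e^{w-\eta}}$: writing $w = x + iy$ with $y = \im w$, one has $w - \eta = (x-u) + i(y - v_\pm)$ and $|y - v_\pm| = |\im w - d \pm \theta_1|$, which for $w$ in $\bar S_0$ (a sector of opening $< \theta_1 - \frac\pi2$ around direction $d$) is bounded below by some $\delta > \frac\pi2$; hence $\re(e^{w-\eta}) = e^{x-u}\cos(y-v_\pm) \le -c\, e^{x-u}$ for a constant $c = -\cos\delta > 0$ (with the minor caveat that $|y-v_\pm|$ might exceed $\pi$, in which case one first reduces mod $2\pi$ — but for $\theta_1 < \pi$ and $\bar S_0$ suitably chosen this does not occur). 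Thus $|e^{e^{w-\eta}}| \le e^{-c e^{x-u}}$. Combining with $|f(\eta)| \le K e^{\alpha u}$ (valid for $u \le \frak r_1$ along the ray, by hypothesis), each ray integral is bounded by
\begin{equation*}
	|e^w|\cdot K \int_{-\infty}^{\frak r_1} e^{-c e^{x-u}} e^{\alpha u} e^{-u}\,du = K\, e^{x}\int_{-\infty}^{\frak r_1} e^{-c e^{x-u}} e^{(\alpha-1)u}\,du.
\end{equation*}
Substituting $s = e^{x-u}$, i.e. $u = x - \log s$, $du = -ds/s$, turns this into $K\,e^{x} \cdot e^{(\alpha-1)x}\int_{e^{x-\frak r_1}}^{\infty} e^{-cs} s^{-\alpha}\,\frac{ds}{s} = K\, e^{\alpha x}\int_{e^{x-\frak r_1}}^{\infty} e^{-cs} s^{-\alpha-1}\,ds$; as $x\to-\infty$ the lower limit $e^{x-\frak r_1}\to 0$, and the integral $\int_0^\infty e^{-cs}s^{-\alpha-1}\,ds$ converges at $\infty$ but diverges at $0$ when $\alpha\ge 0$ — so this naive bound is not quite enough and must be refined: one keeps the lower cutoff and notes $\int_{e^{x-\frak r_1}}^\infty e^{-cs} s^{-\alpha-1}\,ds = O\bigl((e^{x-\frak r_1})^{-\alpha}\bigr) = O(e^{-\alpha x})$ as $x\to-\infty$ for $\alpha>0$ (and $O(1) = O(e^{-\alpha x})$ trivially for $\alpha=0$, using that $\re w$ is bounded above), which exactly cancels the $e^{\alpha x}$ prefactor up to a constant — wait, that gives $O(1)$, hence certainly $O(e^{\alpha\re w})$ since $\re w$ is bounded above on $\bar S_0$. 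The cap contribution near $-\infty$ is handled the same way and is in fact better behaved since the integration region is compact in $u$. Assembling the ray and cap bounds gives $|\bo f(w)| = O(e^{\alpha\re w})$ on $\bar S_0$, as required.

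The main obstacle I anticipate is precisely this growth bookkeeping in part (3): one must be careful that the "gain" from the kernel $e^{e^{w-\eta}}$ (which decays doubly-exponentially in $\re w - u$) is correctly balanced against the prefactor $e^w$, the Jacobian factor $e^{-\eta}$, and the polynomial-in-$e^{\re w}$ loss coming from integrating $s^{-\alpha-1}$ down to a small (not zero) cutoff. A clean way to organise this is to prove the elementary estimate: for $\alpha \ge 0$ and $c>0$ there is $M = M(\alpha,c)$ with $t^{\alpha}\int_{t}^{\infty} e^{-cs}s^{-\alpha-1}\,ds \le M$ for all $t \in (0,1]$ — which follows by splitting at $s=1$ — and then substitute $t = e^{x - \frak r_1}$. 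With that lemma in hand, parts (1) and (2) are essentially bookkeeping via \cref{Loray_rmk}, and the only real content is in (3); I would present (3) in full and dispatch (1)–(2) by citation to \cite{loraynotes} and the change of variables $z = e^w$.
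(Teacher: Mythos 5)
Parts (1) and (2) are fine and follow the same route as the paper: the paper offers no independent proof of \cref{Borel_prop} but cites Propri\'et\'es 1--3 of \cite{loraynotes} via the dictionary of \cref{Loray_rmk}, exactly as you do. Your decision to prove part (3) directly from the integral is a legitimate alternative, but the execution has a genuine gap.

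First, a small algebra slip: after substituting $s = e^{x-u}$ in $\int e^{-ce^{x-u}}e^{(\alpha-1)u}\,du$, the factor $e^{(\alpha-1)u}$ becomes $e^{(\alpha-1)x}s^{1-\alpha}$, so with $du = -ds/s$ the integrand is $e^{-cs}s^{-\alpha}\,ds$, not $e^{-cs}s^{-\alpha-1}\,ds$. But the more serious problem is the concluding sentence: you arrive at a bound of $O(1)$ and then assert that this is ``certainly $O(e^{\alpha\re w})$ since $\re w$ is bounded above on $\bar S_0$.'' This is backwards. The assertion of part (3) is a statement as $w\to-\infty$: one needs $|\bo f(w)|\le C e^{\alpha\re w}$ with $e^{\alpha\re w}\to 0$. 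Boundedness of $\re w$ from above does not make a bounded quantity $O(e^{\alpha\re w})$ when $\alpha>0$; indeed $O(1)$ is strictly \emph{weaker} than $O(e^{\alpha\re w})$ in this limit.

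The deeper reason the argument stalls is that you keep the contour $\partial\bar S_1$ fixed, with its vertical cap at $\re\eta=\frak r_1$ independent of $w$. On that fixed contour, the piece of the integral with $\re\eta$ near $\frak r_1$ contributes on the order of the prefactor $e^{\re w}$ (the kernel is $\asymp 1$ there and everything else is constant), and this cannot be improved to $O(e^{\alpha\re w})$ for $\alpha>1$ no matter how the ray tails are estimated; with the corrected exponent one gets, after the substitution, $e^{\alpha\re w}\int_{e^{\re w-\frak r_1}}^\infty e^{-cs}s^{-\alpha}\,ds$, which is $O(e^{\re w})$ for $\alpha>1$ and $O(|\re w|\,e^{\re w})$ for $\alpha=1$, neither of which is $O(e^{\alpha\re w})$. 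The fix is to let the contour track $w$: take $\delta = \partial S(\im w,\rho,\alpha')$ with $\rho := \min\{\re w,\frak r_1\}$, exactly as the paper does in the proof of \cref{lbt_sup_estimate}. Then along the rays the substitution $s=e^{\re w-u}$ runs from $s=1$ to $s=\infty$, giving $e^{\alpha\re w}\int_1^\infty e^{-cs}s^{-\alpha}\,ds = O(e^{\alpha\re w})$ with no divergence at $s=0$, and on the vertical cap $|e^{w-\eta}|=1$, $|f(\eta)|\lesssim e^{\alpha\re w}$, so the cap also contributes $O(e^{\alpha\re w})$. Alternatively, you could dispatch (3) by the same dictionary used for (1)--(2), invoking the corresponding Propri\'et\'e 3 of \cite{loraynotes} for the classical Borel transform and the relation $(\bo_{d'}f)(\log_d z)=z\cdot(\bo_{d'}g)(z)$, which is what the paper actually does.
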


Accordingly, we call the function $\bo f$ defined in the proposition above the \textbf{log-Borel transform} of $f$.

For $D \subseteq \bar\CC$ and $g:D \into \CC$, we set $$\|g\|_D := \sup\set{|g(z)|:\ z \in D}.$$
For later use, we make the bound in \cref{Borel_prop}(2) more precise:

\begin{lemma}\label{lbt_sup_estimate}
	Let $\bar S_0 = \cl(S(d', \frak r',\theta'))$ be a closed subsector of $S$ with $\theta' \in \left(\frac\pi2,\theta\right)$, and set $S':= S\left(d',r,\theta' - \frac\pi2\right)$ and $C:= \sin\left(\frac{\theta-\theta'}2\right)$.
	Then $$\left\|\bo f\right\|_{S'} \le \begin{cases} \frac{\left\| f\right\|_{\bar S_0}}{C} e &\text{if } r \le \frak r', \\ \frac{\left\| f\right\|_{\bar S_0}}{C} e^{e^{r-\frak r'}} e^{r - \frak r'} &\text{if } r \ge \frak r'. \end{cases}$$
\end{lemma}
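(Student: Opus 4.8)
The plan is to estimate $|\bo f(w)|$ directly from the contour integral defining the logarithmic Borel transform, choosing the closed log-subsector $\bar S_0$ in the definition judiciously. Recall that for $w \in S' = S(d', r, \theta' - \frac{\pi}{2})$ we have $\bo f(w) = (\bo_{\im w} f)(w)$, which is computed as $\frac{e^w}{2\pi i} \int_{\partial \bar S_0} e^{e^{w-\eta}} f(\eta)\, \frac{d\eta}{e^\eta}$. Since this value is independent of the parameters of $\bar S_0$ (within the prescribed range), I am free to deform the path. First I would parametrise $\partial \bar S_0$ as two rays emanating from $-\infty$: writing $d'' = \im w$, the rays are $\eta = t e^{i(d'' \pm \theta'')} + i d''$ for $t$ running appropriately, where $\theta''$ is chosen in $(\frac{\pi}{2}, \theta)$ — and crucially one picks the path so that $\theta - \theta'' = \frac{\theta - \theta'}{2}$ roughly, which is where the constant $C = \sin\left(\frac{\theta - \theta'}{2}\right)$ enters. (One should check the path can be taken through the point $w$ itself or just to its left, so that $\re(w - \eta) \le 0$ along the relevant portion; this is what makes the double-exponential factor manageable.)

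Next I would bound the integrand pointwise. On the path, $|f(\eta)| \le \|f\|_{\bar S_0}$ by hypothesis. The factor $\left|e^{e^{w-\eta}}\right| = e^{\re(e^{w-\eta})}$ is controlled by $e^{|e^{w-\eta}|}$, and writing $\eta = w - \zeta$ with $\zeta$ tracing the two rays from $0$ going into the left half-plane relative to the direction $\im w$, one gets $\re(e^{-\zeta}) = e^{-\re \zeta}\cos(\im\zeta)$; the angle condition $|\im \zeta|$ staying below $\theta''$ together with $\theta'' > \frac{\pi}{2}$ forces $\cos(\im\zeta)$ to be negative once $|\im\zeta| > \frac\pi2$, while the portion near $\zeta = 0$ contributes the $\cos$ being close to $1$. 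The decisive computation is the standard Laplace-type estimate $\left|\frac{e^w}{2\pi i}\int \cdots \right| \le \frac{\|f\|_{\bar S_0}}{2\pi} |e^w| \int_0^\infty e^{\re(e^{w-\eta(t)})} \frac{|d\eta|}{|e^\eta|}$; after the substitution the $|e^w|$ cancels against $|e^{-\eta}| = |e^{-w}| |e^{\zeta}|$ appropriately and one is left with $\frac{1}{\pi}\int_0^\infty e^{(\re e^{-\zeta}) } e^{\re\zeta}\,dt$ along each ray, which is a finite constant times $C^{-1}$; the factor $e$ comes from bounding the small-$t$ part where $\re e^{-\zeta} \le 1$.

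For the second case $r \ge \frak r'$, the point $w$ may lie outside the natural domain of convergence of the naive path, so I would instead shift the vertex of the integration sector to log-radius $\frak r'$ and accept that along the initial segment of the path (of length $\sim r - \frak r'$ in the real part) the quantity $\re(e^{w-\eta})$ can be as large as $e^{r - \frak r'}$, producing the extra factor $e^{e^{r-\frak r'}}$; the residual factor $e^{r-\frak r'}$ accounts for the $|e^w|/|e^{\frak r'}|$-type growth picked up by moving $w$ beyond log-radius $\frak r'$. In both cases the final step is simply to collect constants and check that $\sin\left(\frac{\theta-\theta'}{2}\right)$ is exactly the reciprocal of the geometric constant measuring how much room is left between the opening $\theta'$ of $\bar S_0$ and the opening $\theta$ of $S$.

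The main obstacle I anticipate is the careful bookkeeping of the contour: ensuring that the deformed path stays inside $\bar S_0 \subseteq S$, that it passes on the correct side of $w$ so that the double exponential $e^{\re(e^{w-\eta})}$ does not blow up (in the first case) and is controlled by $e^{e^{r-\frak r'}}$ (in the second), and that the bound $\theta' \in (\frac\pi2, \theta)$ is used to extract precisely the constant $C = \sin\left(\frac{\theta - \theta'}{2}\right)$ rather than some other trigonometric expression. Once the path is fixed, the remaining estimates are routine one-dimensional Laplace-integral bounds of the form $\int_0^\infty e^{-c t}\, dt = 1/c$, which I would not spell out in full.
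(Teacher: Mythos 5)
Your overall strategy matches the paper's: estimate $\bo f(w)$ directly from the contour integral over the boundary of a log-sector, split into the two horizontal rays and the vertical cap segment, use the fact that the rays have $|\im(w-\eta)|>\pi/2$ so that $\re(e^{w-\eta})<0$ there, and in the case $r\ge\frak r'$ anchor the cap at log-radius $\frak r'$ to harvest the factors $e^{e^{r-\frak r'}}e^{r-\frak r'}$. That is exactly what the paper does.

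However, your specific choice of contour half-angle is incorrect and would break the argument. You propose taking the contour half-angle $\theta''$ so that $\theta-\theta''=\frac{\theta-\theta'}{2}$, i.e.\ $\theta''=\frac{\theta+\theta'}{2}$, and you say $C$ ``enters'' via $\theta-\theta''$. But the constant arising from the ray estimate is $-\cos(\text{half-angle})$, and with your choice this gives $-\cos\left(\frac{\theta+\theta'}{2}\right)=\sin\left(\frac{\theta+\theta'-\pi}{2}\right)$, not $\sin\left(\frac{\theta-\theta'}{2}\right)$. More seriously, your contour can leave the domain of $f$: you need $|\im w|+\theta''<\theta$ for the rays to lie in $S$, i.e.\ $|\im w|<\frac{\theta-\theta'}{2}$, while $w\in S'$ only gives $|\im w|<\theta'-\frac\pi2$, and $\theta'-\frac\pi2$ can exceed $\frac{\theta-\theta'}{2}$. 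The paper's choice $\alpha=\frac{\theta-\theta'+\pi}{2}$ fixes both problems simultaneously: then $|\im w|+\alpha<\frac{\theta+\theta'}{2}<\theta$ for every $w\in S'$, and $-\cos\alpha=\sin\left(\frac{\theta-\theta'}{2}\right)=C$ as desired. Two smaller slips: you write $\re(w-\eta)\le 0$ along the rays, but with the cap at $\rho=\min\{\re w,\frak r'\}$ one actually has $\re(w-\eta)\ge 0$ there (the dangerous growth is tamed not by $\re(w-\eta)\le 0$ but by $\cos(\im(w-\eta))<0$); and you attribute the factor $e$ to ``the small-$t$ part'' of the ray integral, whereas in the paper it comes from the vertical cap at $\re\eta=\rho$, which is a separate piece of $\partial\bar S_0$ and must be estimated separately.
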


\begin{proof}
	Let $w \in S'$; we compute $\bo f(w)$ by computing $\bo_d f(w)$, where $d:= \im w$ and the integral is taken along the contour $\delta:= \partial S\left(d,\rho,\alpha\right)$, where $\alpha:= \frac{\theta-\theta'+\pi}2$ and $\rho:= \min\{\re w, \frak r'\}$.  For $\eta \in \delta$, we distinguish two cases.\smallskip
	
	\noindent\textbf{Case 1:} $|\im(w-\eta)| = \alpha$.  Then $\re(e^{w-\eta}) = \cos\alpha \cdot e^{\re w-\re\eta}$; since $C = -\cos\alpha$, we get 
	\begin{align*}
	\frac1{2\pi} \left|e^w\int_{|\im(w-\eta)| = \pi} e^{e^{w-\eta}} f(\eta) \frac{d\eta}{e^\eta}\right| &\le \frac{\left\|f\right\|_{\bar S_0}}{2\pi} \int_{|\im(w-\eta)| = \pi} e^{\cos\alpha \cdot e^{\re w-\re\eta}} e^{\re w -\re\eta} d\eta \\
	&= \frac{\left\|f\right\|_{\bar S_0}}{\pi} \int_{-\infty}^{\rho} e^{\cos\alpha \cdot e^{\re w -r}} e^{\re w-r} dr \\
	&= \frac{\left\|f\right\|_{\bar S_0}}{\pi C} e^{\cos\alpha\cdot e^{\re w-r}}\Big|_{-\infty}^{\rho} \\
	&\le \frac{\left\|f\right\|_{\bar S_0}}{C},
	\end{align*}
	because $\cos\alpha < 0$.
	\smallskip
	
	\noindent\textbf{Case 2:} $\re\eta = \rho$.  Then we have $$\re(e^{w-\eta}) \le |e^{w-\eta}| = e^{\re w-\re\eta} = e^{\re w-\rho},$$ so that
	\begin{equation*}
	\frac1{2\pi} \left|e^w\int_{\re\eta = \rho} e^{e^{w-\eta}} f(\eta) \frac{d\eta}{e^\eta}\right| 
	\le \left\|f\right\|_{\bar S_0}  e^{e^{\re w-\rho}} e^{\re w - \rho}  \le \begin{cases} \|f\|_{\bar S_0} e &\text{if } \re w \le \frak r', \\ \|f\|_{\bar S_0} e^{e^{\re w-\frak r'}} e^{\re w - \frak r'} &\text{if } \re w \ge \frak r'. \end{cases}
	\end{equation*}
	Combining the two cases we obtain the lemma.
\end{proof}

\subsubsection{Logarithmic Laplace transform} \label{llt}

We fix an arbitrary direction $d \in \RR$.  Let $f:T(d) \into \CC$ be continuous, and assume that there exist $C,D>0$ such that $$|f(w)| \le Ce^{De^{\re w}} \quad\text{for all } w \in T(d).$$  We let $$U(d,D) := \set{z \in \CC\setminus\{0\}:\ \cos(\arg z - d) > D|z|} \cup \{0\}$$ be the \textbf{Borel disk} of diameter $\frac1D$ touching the origin and centered on the ray in direction $d$.  Correspondingly, we let $$V(d,D):= \set{w \in \CC:\ \cos(\im w - d) > De^{\re w}} \cup \{-\infty\}$$ the \textbf{log-Borel disk in the direction $d$ of extent} $-\log D$; note indeed that $U(d,D) = \exp(V(d,D))$.  We define the \textbf{log-Laplace transform} $\la_df:V(d,D) \into \CC$ in the direction $d$ of $f$ by
$$\la_d f(w):= \int_{T(d)} e^{-e^{\eta-w}} f(\eta) d\eta.$$

\begin{nrmk}\label{Loray_rmk2}
	If $g(z):= f(\log_d z)$, for $z \in \CC$ such that $\arg z = d$, then the change of variables $z = e^w$  gives that 
	\begin{equation*}
	\left(\la_{d} f\right)(\log_d z) = \frac{\left(\la_{d}g\right) (z)}z,
	\end{equation*}
	where $\la_{d} g$ denotes the Laplace transform of $g$ in the direction $d$ as defined in \cite{loraynotes} (see also Section 5.1 of \cite{Balser:2000fk}).  Thus, the following is obtained from Propri\'et\'es 1--2 on pp. 41--42 of \cite{loraynotes}:
\end{nrmk}

\begin{prop}\label{Laplace_prop}
	Let $\varphi > 0$ and set $S:= S(d,\infty,\varphi)$.  Let $f:S \into \CC$, and assume that for every closed log-subsector $\bar S_0$ of $S$, the restriction $f\rest{\bar S_0}$ is log-holomorphic and there exist $C,D>0$ such that $|f(w)| \le Ce^{De^{\re w}}$ for $w \in \bar S_0$.  Then:
	\begin{enumerate}
		\item For each $\theta \in (0,\varphi)$, there exists $0 < R(\theta) \le \frac1D$ such that $\la_d f$ has a log-holomorphic extension $\la f: V(d,R(\theta)) \into \CC$.
		\item Let $\alpha \ge 0$, and assume that for every closed log-subsector $\bar S_0$ of $S$, we have $|f(w)| = O\left(e^{\alpha\re w}\right)$ as $w \to -\infty$ in $\bar S_0$.  Then, in the situation of part (1), for every closed log-subsector $\bar S_0$ of $V(d,R(\theta))$, we have $|\la f(w)| = O\left(e^{\alpha\re w}\right)$ as $w \to -\infty$ in $\bar S_0$.\qed
	\end{enumerate}
\end{prop}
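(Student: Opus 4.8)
The plan is to reduce everything to the classical statements on pp.~41--42 of \cite{loraynotes} via the change of variables $z = e^w$ recorded in \cref{Loray_rmk2}.  First I would fix $\varphi > 0$ and a function $f$ on $S = S(d,\infty,\varphi)$ satisfying the hypotheses, and set $g(z) := f(\log_d z)$ for $z$ in the image $\exp(S)$, which is an (honest, non-logarithmic) sector at the origin in $\CC$ of opening $2\varphi$ (or all of a punctured disk, if $\varphi \ge \pi$; in that range one works on closed subsectors anyway).  The exponential hypothesis $|f(w)| \le C e^{D e^{\re w}}$ on a closed log-subsector $\bar S_0$ translates, under $|z| = e^{\re w}$, to $|g(z)| \le C e^{D|z|}$ on the corresponding closed (honest) subsector --- i.e.\ $g$ is of exponential growth of order $1$ and type $D$ there.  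This is exactly the hypothesis under which the classical Laplace transform $\la_d g$ in direction $d$ is defined.

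For part (1), the classical result (Propri\'et\'e~1 on p.~41 of \cite{loraynotes}) says that for each $\theta \in (0,\varphi)$, after shrinking the radius one gets a holomorphic extension of $\la_d g$ to a Borel disk $U(d,R(\theta))$ for some $0 < R(\theta) \le \tfrac1D$; here the opening $\varphi$ of the source sector governs how large the disk of the target can be taken (this is the usual Watson-type statement).  Pulling this back through $z = e^w$, and using $U(d,R(\theta)) = \exp(V(d,R(\theta)))$ together with the identity $(\la_d f)(\log_d z) = (\la_d g)(z)/z$ from \cref{Loray_rmk2}, we obtain that $\la_d f$ extends to a function $\la f$ on the log-Borel disk $V(d,R(\theta))$.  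It remains only to check that $\la f$ is log-holomorphic there, i.e.\ continuous on $V(d,R(\theta))$ and holomorphic on its intersection with $\CC$: holomorphy away from $-\infty$ is immediate from holomorphy of $\la_d g$ and of $z\mapsto 1/z$ on $U(d,R(\theta))\setminus\{0\}$, and continuity at $-\infty$ (with value the constant term) follows from the growth estimate in part (2) applied with $\alpha = 0$, or directly from the standard fact that $\la_d g$ is holomorphic (hence continuous) at $0$ with $\la_d g(z) = O(|z|)$ as $z \to 0$, so that $(\la_d g)(z)/z$ has a finite limit.

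For part (2), I would again transfer the extra hypothesis: $|f(w)| = O(e^{\alpha \re w})$ as $w \to -\infty$ in $\bar S_0$ means $|g(z)| = O(|z|^{\alpha})$ as $z \to 0$ in the corresponding subsector.  Propri\'et\'e~2 on p.~42 of \cite{loraynotes} then gives $|(\la_d g)(z)| = O(|z|^{\alpha+1})$ as $z \to 0$ in subsectors of $U(d,R(\theta))$ --- the Laplace transform raises the vanishing order by one, matching the shift already visible in the factor $1/z$ of \cref{Loray_rmk2}.  Dividing by $z$ and translating back via $|z| = e^{\re w}$ yields $|\la f(w)| = O(e^{\alpha \re w})$ as $w \to -\infty$ in closed log-subsectors of $V(d,R(\theta))$, which is the claim.

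The only genuine point requiring care --- the ``main obstacle,'' though it is minor --- is bookkeeping the geometry of the change of variables: verifying that $\exp$ carries closed log-subsectors of $S$ (resp.\ of $V(d,R(\theta))$) onto closed honest subsectors of $\exp(S)$ (resp.\ of $U(d,R(\theta))$) and conversely, so that the phrases ``for every closed log-subsector'' on our side correspond exactly to ``for every closed subsector'' in \cite{loraynotes}, and that the $-\infty$ point of $\bar\CC$ corresponds to the origin $0\in\CC$ in a way compatible with the stated topology on $\bar\CC$.  Once this dictionary is set up --- which is the same dictionary already used to justify \cref{Loray_rmk,Loray_rmk2} --- both parts are immediate consequences of the cited classical properties, and no new analytic estimate is needed beyond what \cref{Loray_rmk2} supplies.
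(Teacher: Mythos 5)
Your proposal is correct and follows exactly the route the paper intends: Remark \ref{Loray_rmk2} is stated precisely so that Proposition \ref{Laplace_prop} reads off, via the dictionary $z = e^w$, $g = f\circ\log_d$, from Propri\'et\'es 1--2 on pp.~41--42 of \cite{loraynotes}, which is why the paper marks the proposition \qed. The one small point worth tightening: appealing to ``part (2) with $\alpha = 0$'' only yields boundedness of $\la f$ near $-\infty$, not existence of the limit; your alternative observation---that $\la_d g$ is holomorphic at $0$ with $\la_d g(0) = 0$, so $(\la_d g)(z)/z$ has a finite limit as $z\to 0$---is the right argument for continuity at $-\infty$.
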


In view of the previous proposition, we call the union $V:=\bigcup_{\theta\in(0,\varphi)}V(d,R(\theta))$ a \textbf{log-sectorial domain}, and we refer to the common extension $\la f:V \into \CC$ of $\la_0 f$ of $\la_0 f$ given by \cref{Laplace_prop} as the \textbf{log-Laplace transform} of $f$.  Note that, in practice, we shall usually restrict the domain of $\la f$ to a sector $S\left(d,\log R,\theta+\frac\pi2\right)$ for suitable $\theta \in (0,\varphi)$ and $R>0$ on which it is log-holomorphic.

For $f:S(d,\frak r,\theta) \into \CC$ as in \cref{lbt}, \cref{Borel_prop} implies that $\la(\bo f)$ is defined and log-holomorphic on every closed log-subsector $\bar S_0$ of $S(d,\frak r,\theta)\cap V$.  Indeed, $\la$ is the inverse operator to $\bo$ (see page 44 of \cite{loraynotes}):

\begin{prop} \label{llt-lbt_prop}
	For $f:S(d,\frak r,\theta) \into \CC$ as in \cref{lbt}, we have $\la(\bo f) = f$ on $S(d,\frak r,\theta)\cap V$. \qed
\end{prop}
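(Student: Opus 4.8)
The plan is to reduce this to the classical inversion formula $\la \circ \bo = \mathrm{id}$ (the "Propriété" on p.~44 of \cite{loraynotes}, cf.\ Section~5 of \cite{Balser:2000fk}) via the exponential change of variables $z = e^w$ that was already set up in \cref{Loray_rmk} and \cref{Loray_rmk2}. Fix $w_0 \in S(d,\frak r,\theta)\cap V$ and write $d' := \im w_0$; I want to show $\la(\bo f)(w_0) = f(w_0)$. Since both $\bo$ and $\la$ are taken in the direction $d'$ along the log-line $T(d')$ through $w_0$ (the function $\bo f$ restricted to $T(d')$ is $\bo_{d'} f$, and $\la f(w_0) = \la_{d'}(\bo f)(w_0)$), everything takes place in the single direction $d'$, and it suffices to treat the case $d' = d$ after an obvious rotation; so assume $\im w_0 = d$.

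First I would pick a closed log-subsector $\bar S_0 = \cl(S(d,\frak r',\theta'))$ of $S(d,\frak r,\theta)$ with $\theta' \in (\frac\pi2,\theta)$ containing $w_0$ in its interior, so that $f\rest{\bar S_0}$ is bounded and log-holomorphic by hypothesis. Set $g(z) := f(\log_d z)$ for $z \in \exp(\bar S_0)$; this is a bounded holomorphic function on the genuine sector $\exp(\bar S_0) = S(d,e^{\frak r'},\theta')$ in $\CC$ (here "$S$" on the right is an honest sector, opening $2\theta'$, with vertex at $0$), to which the classical Borel--Laplace theory applies. By \cref{Loray_rmk}, $(\bo_d f)(\log_d z) = z\cdot(\bo_d g)(z)$, i.e.\ the log-Borel transform of $f$ corresponds under $z = e^w$ to $z$ times the classical Borel transform of $g$; and \cref{Borel_prop}(2) guarantees the exponential-type bound $|\bo g(z)| \le C e^{D|z|}$ needed to form the classical Laplace transform. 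Applying \cref{Loray_rmk2} with $\bo_d g$ in place of "$f$" and $z\cdot \bo_d g(z)$ (which is exactly $(\bo_d f)\circ\log_d$) in place of "$g$", we get $(\la_d(\bo_d f))(\log_d z) = \frac{1}{z}\,(\la_d h)(z)$ where $h(z) := z\cdot \bo_d g(z)$. A direct manipulation of the classical Laplace integral shows $\la_d h(z) = \la_d(\zeta\cdot\bo_d g(\zeta))(z) = z\cdot \la_d(\bo_d g)(z)$ — this is the elementary identity $\la_d(\zeta\, \psi(\zeta)) = z\,\la_d\psi(z)$, which one checks by writing out $\int_0^{\infty e^{id}} e^{-\zeta/z}\zeta\psi(\zeta)\,d\zeta$ and recognizing $\zeta e^{-\zeta/z} = -z\,\frac{d}{d\zeta}(e^{-\zeta/z})\cdot$ (up to the factor) or simply by differentiating the Laplace transform; alternatively it is immediate from the standard dictionary between multiplication by the variable and differentiation under Borel/Laplace. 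Hence $(\la_d(\bo_d f))(\log_d z) = \frac1z\cdot z\cdot \la_d(\bo_d g)(z) = (\la_d(\bo_d g))(z)$, and by the classical inversion theorem this equals $g(z) = f(\log_d z)$. Evaluating at $z = e^{w_0}$ gives $\la(\bo f)(w_0) = f(w_0)$.

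Finally I would note that, since $w_0$ was an arbitrary point of $S(d,\frak r,\theta)\cap V$ and both sides are log-holomorphic there (the left side by \cref{Borel_prop} and \cref{Laplace_prop}, the right side by hypothesis), the pointwise equality on the union of the log-lines $T(d')$ through such points already gives equality of functions on all of $S(d,\frak r,\theta)\cap V$; no separate analytic-continuation argument is needed, though one could invoke the identity principle to be safe. I expect the only genuinely delicate point to be bookkeeping: making sure that the various subsectors $\bar S_0$ can be chosen so that $w_0$ lies in the region where both \cref{Borel_prop} and \cref{Laplace_prop} produce the extensions $\bo f$ and $\la f$ (i.e.\ that $R(\theta')$ and the log-radii line up so that $w_0 \in V(d,R(\theta'))$), and that the classical statement in \cite{loraynotes} is being applied on a sector of sufficient opening — here the hypotheses $\theta > \pi/2$ on the domain of $f$ and $\theta' > \pi/2$ are exactly what make the classical Borel transform holomorphic on a sector of positive opening, so the classical inversion applies. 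Everything else is the routine translation between the "$z$" picture and the "$w = \log z$" picture already established in the two preceding remarks.
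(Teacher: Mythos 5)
Your high-level plan — transport everything through $z = e^w$ and invoke the classical Borel--Laplace inversion — is the right idea, and indeed mirrors the paper, which just cites Loray p.~44. But the one genuine computation in your argument, the ``elementary identity'' $\la_d(\zeta\,\psi(\zeta)) = z\,\la_d\psi(z)$, is false, and neither of your sketched justifications works. With the kernel $e^{-\zeta/z}\,d\zeta$ one has $\zeta\,e^{-\zeta/z} = z^2\,\partial_z\bigl(e^{-\zeta/z}\bigr)$, not $z\cdot e^{-\zeta/z}$, so multiplication by $\zeta$ corresponds to $z^2\tfrac{d}{dz}$, not to multiplication by $z$; on monomials, $\la_d(\zeta\cdot\zeta^{\alpha})$ carries a $\Gamma(\alpha+2)$ where $z\,\la_d(\zeta^{\alpha})$ carries a $\Gamma(\alpha+1)$. (You can also test your identity against \cref{power_expl}: it would force $\Gamma(\alpha+1)=\Gamma(\alpha)$.) So the cancellation $\tfrac1z\cdot z$ you perform at the end is illusory, and the chain ``$(\la_d(\bo_d f))(\log_d z) = \tfrac1z\la_d h(z) = \la_d(\bo_d g)(z) = g(z)$'' breaks at the middle step.

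The fix is to bypass \cref{Loray_rmk2} and do the change of variables directly on $\la_d(\bo_d f)$. Substituting $\zeta = e^\eta$ (so $d\eta = d\zeta/\zeta$) into the definition of the log-Laplace gives
\begin{equation*}
\bigl(\la_d(\bo_d f)\bigr)(\log_d z) \;=\; \int_0^{\infty e^{id}} e^{-\zeta/z}\,(\bo_d f)(\log_d\zeta)\,\frac{d\zeta}{\zeta},
\end{equation*}
and now \cref{Loray_rmk} supplies $(\bo_d f)(\log_d \zeta) = \zeta\cdot(\bo_d g)(\zeta)$ with $g = f\circ\log_d$, so the two factors of $\zeta$ cancel \emph{exactly}, leaving the bona fide classical Laplace integral $\int_0^{\infty e^{id}} e^{-\zeta/z}\,(\bo_d g)(\zeta)\,d\zeta$. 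That equals $g(z) = f(\log_d z)$ by the classical inversion theorem. The point is that the $\tfrac1z$ in \cref{Loray_rmk2} and the $z$ in \cref{Loray_rmk} do not cancel against \emph{each other} via a false Laplace identity; rather, the $z$ from \cref{Loray_rmk} cancels the $\tfrac1\zeta$ that the logarithmic change of measure produces, and \cref{Loray_rmk2} should not be invoked here at all. The remaining bookkeeping in your proposal (reduction to a single direction, choice of subsector, log-holomorphy of both sides) is fine.
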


\begin{expl}\label{power_expl}
	For $\alpha \in \RR$, we set $p_\alpha(w):= e^{\alpha w}$.  Then for $w \in \RR$, we have
	\begin{align*}
	\la_0(p_\alpha)(w) &= \int_{-\infty}^{\infty} e^{-e^{\eta-w}} e^{\alpha\eta} d\eta & \\ &=\int_0^\infty e^{-\zeta/e^w} \zeta^{\alpha-1} d\zeta &\text{(taking } \zeta = e^\eta) \\ &= e^{\alpha w} \int_0^\infty e^{-\xi} \xi^{\alpha-1} d\xi &\text{(taking } \zeta = e^w\eta) \\ &= \Gamma(\alpha) e^{\alpha w}.
	\end{align*}
	It follows, by analytic continuation and \cref{Laplace_prop}, that $\la(p_\alpha) = \Gamma(\alpha) p_\alpha$, and hence by \cref{llt-lbt_prop} that $\bo(p_\alpha) = \frac{p_\alpha}{\Gamma(\alpha)}$.
\end{expl}

\subsection{Generalized power series with complex coefficients}\label{lbt_convergent}

Let now $F(X) = \sum_{\alpha \ge 0} a_\alpha X^\alpha$ $\in \Pc{C}{X^*}$ be such that $\|F\|_r < \infty$, for some $r>0$. We explain here how such a series defines a log-holomorphic function on some log-disk.
Denoting by $\log$ the principle branch of the logarithm on $\CC \setminus (-\infty,0]$, we set $$z^\alpha := e^{\alpha\log z} \quad\text{for } z \in \CC \setminus (-\infty,0].$$  Then, for $w \in S(0,\infty,\pi)$, we have that $$p_\alpha(w) = (e^w)^\alpha;$$  in other words, the entire function $p_\alpha$ extends the function $w \mapsto (e^w)^\alpha: S(0,\infty,\pi) \into \CC$.  Since $|p_\alpha(w)| = e^{\alpha\re w}$, it follows that the series $$\bar F(w) := \sum a_\alpha e^{\alpha w}$$ converges absolutely and uniformly, for $w \in H(\log r)$.  By Weierstrass's Theorem, the function $\bar F:H(\log r)\setminus\{-\infty\} \into \CC$ is holomorphic, and by the previous remarks, we have $$\bar F(w) = F(e^w) \quad\text{for } w \in S(0,\log r,\pi).$$  It follows, in particular, from \cite[Lemma 5.5]{Dries:1998xr} that $\bar F$ extends continuously to $-\infty$ and satisfies $\bar F(-\infty) = F(0)$.  Below, we refer to the log-holomorphic function $\bar F$ thus defined on $H(\log r)$ as the \textbf{log-sum} of $F(X)$.

\subsubsection{Logarithmic Borel transform of convergent generalized power series with natural support}

We again fix $F(X) = \sum a_\alpha X^\alpha \in \Pc{C}{X^*}$ and $r > 0$ such that $\|F\|_r < \infty.$  In addition, we assume that the support of $F(X)$ --- a subset of $[0,\infty)$ by definition --- is natural.  Since $\bar F$ is defined on $H(\log r) = S(0,\log r,\infty)$, we obtain from  \cref{Borel_prop} that its log-Borel transform $\bo\bar F$ is log-holomorphic on $\bar\CC$.

In view of \cref{power_expl}, we set $$\bo F(X):= \sum \frac{a_\alpha}{\Gamma(\alpha)} X^\alpha,$$ called the \textbf{formal Borel transform} of $F(X)$.  Note that, for $\sigma >0$, we have by Binet's second formula (see \cite{MR0178117}) that $$C(\sigma):= \max_{\alpha \ge 0} \frac{\sigma^{\alpha}}{\Gamma(\alpha)} < \infty.$$  Thus, for any $\sigma > 0$, we have that 
\begin{equation}\label{lbt_norm_eq}
	\|\bo F\|_{\sigma} = \sum \frac{(\sigma/r)^\alpha}{\Gamma(\alpha)} |a_\alpha| r^{\alpha} \le C(\sigma/r) \|F\|_{r}.
\end{equation}
Since $F$ has natural support, the sum is finite for all $\sigma$; so the series $\bo F(X)$ has infinite radius of convergence, and its log-sum $\bar{\bo F}$ is also log-holomorphic on $\bar\CC$.  In summary:

\begin{prop}\label{lbt_asymptotic_prop}
	Let $F(X)$ be a convergent generalized power series with natural support.  Then both $\bo\bar{F}$ and $\bar{\bo F}$ are log-holomorphic on $\bar\CC$, and we have $\bar{\bo F} = \bo\bar F$.
\end{prop}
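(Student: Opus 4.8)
The plan is to prove the equality $\bar{\bo F} = \bo \bar F$ of two log-holomorphic functions on $\bar\CC$ by checking it first on the negative real axis (a set with a limit point, namely $-\infty$, inside $\bar\CC$) and then invoking the identity theorem, using that $\bar\CC \cap \CC = \CC$ is connected. The log-holomorphy claims for $\bo\bar F$ and $\bar{\bo F}$ have already been established in the paragraph preceding the statement: $\bar F$ is log-holomorphic on $H(\log r) = S(0,\log r,\infty)$, so \cref{Borel_prop}(1) gives that $\bo\bar F$ is log-holomorphic on $\bar\CC$; and the estimate \eqref{lbt_norm_eq} together with $C(\sigma) < \infty$ from Binet's second formula shows $\|\bo F\|_\sigma < \infty$ for every $\sigma$ (here naturalness of $\supp F$ is what makes each such sum finite), so $\bo F(X)$ has infinite radius of convergence and its log-sum $\bar{\bo F}$ is log-holomorphic on $\bar\CC$. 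So only the identity $\bar{\bo F} = \bo\bar F$ remains.

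First I would reduce to the case of a monomial. Both sides are $\RR$-linear in the coefficient sequence $(a_\alpha)$, and for fixed $r' < r$ the partial sums $\sum_{\alpha \le N} a_\alpha X^\alpha$ converge to $F(X)$ in $\|\cdot\|_{r'}$; by \eqref{lbt_norm_eq} the formal Borel transforms of these partial sums converge to $\bo F$ in $\|\cdot\|_\sigma$ for every $\sigma$, hence their log-sums converge locally uniformly on $\bar\CC$ to $\bar{\bo F}$. On the other side, the log-sums $\bar{F}_N$ of the partial sums converge locally uniformly on $H(\log r)$ to $\bar F$, and \cref{lbt_sup_estimate} shows that $\bo$ is continuous for local uniform convergence, so $\bo\bar{F}_N \to \bo\bar F$ locally uniformly on $\bar\CC$. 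Thus it suffices to prove $\bar{\bo{(a_\alpha X^\alpha)}} = \bo\,\overline{a_\alpha X^\alpha}$, i.e. the case $F(X) = X^\alpha$.

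For $F(X) = X^\alpha$ we have $\bar F = p_\alpha$ on $S(0,\infty,\pi)$ (with log-radius $+\infty$ since $X^\alpha$ converges everywhere), so $\bo\bar F = \bo p_\alpha = \frac{p_\alpha}{\Gamma(\alpha)}$ by \cref{power_expl} when $\alpha > 0$; and $\bo F(X) = \frac{1}{\Gamma(\alpha)} X^\alpha$ by definition of the formal Borel transform, whose log-sum is exactly $\frac{1}{\Gamma(\alpha)} p_\alpha$. Hence the two agree. The case $\alpha = 0$ (where $\frac{1}{\Gamma(0)} = 0$) is handled by noting that $\bo$ applied to the constant function $p_0 \equiv 1$ vanishes identically — a direct computation of the contour integral, or continuity in $\alpha$ from the case $\alpha > 0$ together with the local-uniform continuity of $\bo$ already used above. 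Combining the monomial case with the limiting argument of the previous paragraph yields $\bar{\bo F} = \bo\bar F$ on $\bar\CC$, completing the proof.

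The main obstacle I anticipate is the interchange-of-limits step: one must be careful that the local uniform convergence of the partial-sum log-sums on $H(\log r)$ really does pass through the log-Borel transform to give local uniform convergence of $\bo\bar F_N$ on all of $\bar\CC$, uniformly on closed log-subsectors. This is exactly what \cref{lbt_sup_estimate} is designed to provide — the explicit bound $\|\bo f\|_{S'} \le \frac{\|f\|_{\bar S_0}}{C} e^{e^{r-\frak r'}} e^{r-\frak r'}$ controls the tail $\bo(\bar F - \bar F_N)$ on any chosen subsector in terms of $\|\bar F - \bar F_N\|$ on a slightly larger subsector of $H(\log r)$ — so the step should go through cleanly, but it is where the quantitative work resides.
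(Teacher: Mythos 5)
Your proof is correct and essentially the same as the paper's: both ultimately rest on the monomial identity $\bo p_\alpha = p_\alpha/\Gamma(\alpha)$ from \cref{power_expl} and on controlling the infinite sum via the estimate established in (the proof of) \cref{lbt_sup_estimate}, together with $\|F\|_r<\infty$. The only difference is organizational --- the paper fixes $w$, writes $\bar F = \sum_n a_n p_{\alpha_n}$, and directly interchanges the sum over the (countable, increasing) support with the Borel contour integral by checking $\sum_n\int_{\partial\bar S}|u_n|<\infty$, whereas you recast the same ingredients as ``prove it for finite partial sums, then pass to the limit using \cref{lbt_sup_estimate} for continuity of $\bo$ under locally uniform convergence,'' which is the same argument in density-plus-continuity form.
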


\begin{proof}
	Since $F(X)$ has natural support, we write $F(X)= \sum_{n=0}^N a_n r^{\alpha_n}$ with either $N \in \NN$, or $N=\infty$ and $\lim_{n \to \infty} \alpha_n = +\infty$.  For $w \in \CC$, let $\bar S$ be the closure of the $\log$-sector $S(\im w,\log r,\pi)$, and define $K:\partial \bar S \into \CC$ by $$K(\eta):= \frac1{2\pi i} e^{w-\eta} e^{e^{w-\eta}}.$$  For $n \in \NN$ with $n \le N$, let $u_n: \partial \bar S \into \CC$ be defined by $$u_n(\eta):= a_n e^{\alpha_n \eta} K(\eta) = a_n p_{\alpha_n}(\eta) K(\eta),$$  where $p_{\alpha_n}$ is defined as in \cref{power_expl}.  Proceeding as in the proof of \cref{lbt_sup_estimate}, we obtain a $C>0$ such that $$\int_{\partial \bar S} |u_n(\eta)| d\eta \le C|a_n|r^{\alpha_n}, \quad\text{for each } n.$$  Since $\|F\|_r < \infty$, it follows that $\sum_n \int_{\partial \bar S} |u_n(\eta)| d\eta < \infty$.  If follows from analysis that the functions $u_n$, for each $n$, as well as $\sum_n u_n$ and $\sum_n |u_n|$ are integrable on $\partial \bar S$ and that
	\begin{align*}
		\bar{\bo F}(w) &= \sum_n \frac{a_n}{\Gamma(\alpha_n)} e^{\alpha_n w}= \sum_n a_n (\bo p_{\alpha_n})(w) \\
		&= \sum_n \int_{\partial \bar S} u_n(\eta) d\eta = \int_{\partial \bar S} \left(\sum_n u_n(\eta)\right) d\eta \\
		&= \int_{\partial \bar S} \bar F(\eta) K(\eta) d\eta = (\bo \bar F)(w),
	\end{align*}
	as claimed.
\end{proof}

\subsection{Generalized multisummable functions} \label{pre_gen_multi_section}

We now define generalized multisummable functions inspired by Tougeron's characterization of multisummable functions \cite{Tougeron:1994fk} and by their presentation in \cite{Dries:2000mx}.  However, while it was possible in \cite{Dries:2000mx} to refer to the existing literature for summability, it is not the case in our setting. More precisely, our aim is to show a quasianalyticity result for our functions analogous to that in \cite[Proposition 2.18]{Dries:2000mx}. To this end, we need to introduce suitable Borel and Laplace transforms adapted to the generalized multisummable framework (see \cref{ramified_section}). The presentation turns out to be more readable in this setting by replacing the usual ``Gevrey order" $k$ by $1/k$. This leads to the following definitions.

For $R,k \ge 0$, $\theta>\pi/2$ and $p \in \NN$, we set $$\rho^{R,k}_p:= \frac R{(1+p)^{k}} \quad\text{and}\quad S^{R,k}_p:= \cl\left(S\left(0,\log R, {\theta k}\right) \cup H\left(\log\rho^{R,k}_p\right)\right).$$ 

Let $K \subseteq [0,\infty)$ be a nonempty finite set and $r > 1$ (note that the situation studied  in \cite{Dries:2000mx} corresponds, in the current notation, to $\pi/2<\theta<\pi$ and $K\subseteq [0,1]$, in order to avoid dealing with the logarithmic chart), and set \begin{equation}\label{M_K}M_K:= \max K,\ \mu_K:= \min K.\end{equation}  Moreover, we fix a natural set $\Delta \subseteq [0,\infty)$ and set $\tau:= (K,R,r,\theta,\Delta)$ (note that $\Delta = \NN$ in \cite{Dries:2000mx}). We define $$S^\tau:= \bigcap_{k \in K \setminus \{0\}} S\left(0,\log R, {\theta k}\right) \text{ if } K \ne \{0\},\  \quad\text{and}\quad  S^\tau:= H(\log R) \text{ if } K=\{0\}$$ and, for $p \in \NN$, $$\rho^\tau_p:= \min_{k \in K}\rho^{R,k}_p = \rho^{R,M_K}_p$$ and  $$S^\tau_{p}:= \bigcap_{k \in K} S^{R,k}_p.$$
%(Note that if $k=0$, then $S^\tau = H(\log R)$ and $S^\tau_p = S^\tau$ for each $p$; in particular, the angle $\theta$ plays no role in this case.)  

\begin{nrmk} \label{ms_domain_rmk}
	If $0 \in K$ and $K' := K \setminus\{0\}$ is nonempty, then $S^\tau = S^{\tau'}$ and $S^\tau_p = S^{\tau'}_p$ for all $p$, where $\tau' = (K',R,r,\theta,\Delta)$.
\end{nrmk}

\begin{df}\label{tougeron_sum}
	For each $p \in \NN$, let $f_p:S^\tau_p \into \CC$ be log-holomorphic, that is, there exists a log-domain $D_p \supseteq S^\tau_p$ and a log-holomorphic $g_p:D_p \into \CC$ such that $f_p = g_p\rest{S^\tau_p}$.  Moreover, we assume that there are  generalized power series $F_p(X) \in \Pc{C}{X^*}$ with support contained in $\Delta$ such that $\|F_p\|_{\rho^\tau_p} < \infty$ and $$f_p(w) = \bar{F_p}(w) \quad\text{for } w \in H(\log\rho^\tau_p).$$  Assume also that $$\sum_p \|F_p\|_{\rho^\tau_p}r^p < \infty \quad\text{and}\quad \sum_p \|f_p\|_{S^\tau_p} r^p < \infty,$$ where $\|f_p\|_{S^\tau_p}:= \sup_{w \in S^\tau_p} |f_p(w)|$ denotes the sup norm of $f_p$ on $S^\tau_p$.  The second of these finiteness assumptions implies that $\sum_p f_p$ converges uniformly on $S^\tau \setminus \{-\infty\}$ to a holomorphic function $g:S^\tau \setminus\{-\infty\} \into \CC$, while the first implies that this $g$ extends continuously to $-\infty$, so that the resulting $g:S^\tau \into \CC$ is log-holomorphic.  From now on, we abbreviate this situation by writing $$g =_\tau \sum_p f_p.$$
	Thus, for a log-holomorphic function $f:S^\tau \into \CC$, we set $$\|f\|_\tau:= \inf\set{\max\set{\sum_p \|F_p\|_{\rho^\tau_p}r^p,\sum_p \|f_p\|_{S^\tau_p} r^p}:\ f =_\tau \sum_p f_p} \in [0,\infty];$$  note that $\|f\|_\tau < \infty$ if and only if there exists a sequence $f_p$ such that $f =_\tau \sum_p f_p$.
\end{df}

We set $$\G_\tau:= \set{f: S^\tau \into \CC:\ f \text{ is log-holomorphic and } \|f\|_\tau < \infty}.$$
It is immediate from this definition that $\G_\tau$ is a $\CC$-vector space under pointwise addition; moreover, if $\Delta$ is closed under addition, then $\G_\tau$ is closed under multiplication of functions, making $\G_\tau$ a $\CC$-algebra.  

\begin{nprclaim}[Convention]
	If $\Delta$ is natural, then so is its closure under addition; so we assume from now on that $\Delta$ is closed under addition.
\end{nprclaim} 

\begin{expl} \label{classic_expl}
	Tougeron's characterization implies that, if $M_K < 2$ (where $M_K$ is as in \eqref{M_K}) and $f:S\left(0,R,{\theta M_K}\right) \into \CC$ is such that $f \circ \log$ is $K$-summable, then $f \in \G_\tau$, where $\tau = (K,R, r, \theta,\NN)$ for some $r > 1$. 
\end{expl}

\begin{dfs} \label{k-summable_def}
	\begin{enumerate}
		\item We call a function $f$ \textbf{generalized multisummable in the real direction} if $f \in \G_\tau$ for some $\tau$ as above. 
		\item We call a function $f$ \textbf{generalized $K$-summable in the real direction} if there exist $R'>0$, $r'>1$, $\theta' > \pi/2$ and a natural $\Delta' \subseteq [0,\infty)$ such that $f \in \G_{\tau'}$ with $\tau' = (K,R',r',\theta',\Delta')$. 
	\end{enumerate}
\end{dfs}

\begin{expl}\label{gevrey_2_to_gevrey_1}
	In terms of \cref{classic_expl}, Tougeron's characterization of multisum\-mable functions implies that if $f$ is $K$-summable in the positive real direction, then $f \circ \exp$ is generalized $K$-summable in the real direction.  
\end{expl}

Let $f \in \G_\tau$ with associated functions $f_p$ and series $F_p$ be as in \cref{tougeron_sum} such that $\sum_p \|F_p\|_{\rho^\tau_p}r^p < 2\|f\|_\tau$ and $\sum_p \|f_p\|_{S^\tau_p} r^p < 2\|f\|_\tau$.
In \cref{k-sum_asymp_lemma} below, we show that $f$ has asymptotic expansion $F(e^w)$ at $-\infty$, for the generalized power series $F(X)$ with support contained in $\Delta$ (and hence natural), defined in \eqref{F} below.  
To do so, say $F_p(X) = \sum a_{p,\alpha} X^\alpha$ for each $p$, where each $a_{p,\alpha} \in \RR$, and write $\rho_p$ for $\rho_p^\tau$.  Then for each $p$ and $\alpha$, and for arbitrary $s \in (1,r)$, we have $$|a_{p,\alpha}| \le \frac{\|F_p\|_{\rho_p}}{\rho_p^\alpha} = \frac{\|F_p\|_{\rho_p}}{R^\alpha} (p+1)^{\alpha M_K} \le \frac{\|F_p\|_{\rho_p}}{R^\alpha} s^p (p+1)^{\alpha M_K}.$$  Therefore, for each $\alpha$ and arbitrary $s \in (1,r)$, we get 
\begin{align*}
	\sum_p |a_{p,\alpha}| &\le \frac1{R^\alpha} \sum_p (p+1)^{\alpha M_K} \|F_p\|_{\rho_p} s^p \\
	&= \frac1{R^\alpha} \sum_p \|F_p\|_{\rho_p} r^p (p+1)^{\alpha M_K} \left(\frac sr\right)^p \\
	&\le \frac{C(s,\alpha)}{R^\alpha} \sum_p \|F_p\|_{\rho_p} r^p < \infty,
\end{align*}
where $C(s,\alpha):= \max_p (p+1)^{\alpha M_K} (s/r)^p < \infty$.  So we set $$a_\alpha := \sum_p a_{p,\alpha},$$ for each $\alpha$, and \begin{equation}\label{F}
	F(X) := \sum a_\alpha X^\alpha,
\end{equation} which has support contained in $\Delta$.

\begin{lemma} \label{p-remainder}
	There exist $D,E> 0$ such that for all $p \in \NN$ and all $\beta \ge 0$, we have $$\left|f_p(w) - \sum_{\alpha < \beta} a_{\alpha,p} e^{\alpha w}\right| \le CD^\beta  \frac{(p+1)^{\beta M_K}}{r^p} \left|e^{\beta w}\right| \quad\text{for } w \in S_p^\tau.$$
\end{lemma}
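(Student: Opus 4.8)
The plan is to bound, for fixed $p\in\NN$ and $\beta\ge 0$, the auxiliary function $h_{p,\beta}(w):=e^{-\beta w}\bigl(f_p(w)-\sum_{\alpha<\beta}a_{p,\alpha}e^{\alpha w}\bigr)$; since $|e^{-\beta w}|=e^{-\beta\re w}=|e^{\beta w}|^{-1}$, a bound $|h_{p,\beta}(w)|\le CD^\beta(p+1)^{\beta M_K}/r^p$ on $S^\tau_p$ is exactly what is asked. I would split $S^\tau_p$ into the log-disk $H(\log\rho_p)\subseteq S^\tau_p$ (recall $\rho_p=\rho^\tau_p=R/(1+p)^{M_K}$) and the ``sectorial part'' $S^\tau_p\setminus H(\log\rho_p)=\{w\in S^\tau_p:\ \re w\ge\log\rho_p\}$, and use throughout that $|e^{\gamma w}|=e^{\gamma\re w}$ for real $\gamma$, the identity $\rho_p^{-\beta}=(1+p)^{\beta M_K}/R^\beta$, and the two standing estimates $\|F_p\|_{\rho_p}r^p<2\|f\|_\tau$ and $\|f_p\|_{S^\tau_p}r^p<2\|f\|_\tau$ recalled just before the lemma. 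No holomorphy or maximum-modulus argument will be needed; the constants will come out as $D=1/R$ and $C$ a fixed multiple of $\|f\|_\tau$.

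On $H(\log\rho_p)$ we have $f_p=\bar{F_p}$, hence $h_{p,\beta}(w)=\sum_{\alpha\ge\beta}a_{p,\alpha}e^{(\alpha-\beta)w}$, an absolutely convergent generalized power series whose exponents $\alpha-\beta$ are $\ge 0$; since $\re w<\log\rho_p$, this gives $|h_{p,\beta}(w)|\le\sum_{\alpha\ge\beta}|a_{p,\alpha}|\rho_p^{\alpha-\beta}\le\rho_p^{-\beta}\|F_p\|_{\rho_p}$. On the sectorial part I would estimate $|h_{p,\beta}(w)|\le\bigl|e^{-\beta w}f_p(w)\bigr|+\bigl|\sum_{\alpha<\beta}a_{p,\alpha}e^{(\alpha-\beta)w}\bigr|$: the first term is $\le e^{-\beta\re w}\|f_p\|_{S^\tau_p}\le\rho_p^{-\beta}\|f_p\|_{S^\tau_p}$ because $\re w\ge\log\rho_p$ (and $\beta\ge 0$), while in the second term the exponents $\alpha-\beta$ are now negative, so $\re w\ge\log\rho_p$ again yields $e^{(\alpha-\beta)\re w}\le\rho_p^{\alpha-\beta}$ and therefore $\bigl|\sum_{\alpha<\beta}a_{p,\alpha}e^{(\alpha-\beta)w}\bigr|\le\sum_{\alpha<\beta}|a_{p,\alpha}|\rho_p^{\alpha-\beta}\le\rho_p^{-\beta}\|F_p\|_{\rho_p}$. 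Adding the two, $|h_{p,\beta}(w)|\le\rho_p^{-\beta}\bigl(\|f_p\|_{S^\tau_p}+\|F_p\|_{\rho_p}\bigr)$ holds on all of $S^\tau_p$, the disk case being subsumed.

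To finish I would substitute $\rho_p^{-\beta}=(1+p)^{\beta M_K}/R^\beta$ and $\|f_p\|_{S^\tau_p}+\|F_p\|_{\rho_p}<4\|f\|_\tau/r^p$, obtaining $|h_{p,\beta}(w)|\le 4\|f\|_\tau R^{-\beta}(1+p)^{\beta M_K}/r^p$ for all $w\in S^\tau_p$, i.e.\ the claimed inequality with $C=4\|f\|_\tau$ and $D=1/R$ (the case $w=-\infty$ following by continuity when $\beta>0$, and for $\beta=0$ directly from $|f_p(-\infty)|=|F_p(0)|\le\|F_p\|_{\rho_p}$). The only subtle point — and the sole genuine idea — is the sectorial part: one must treat $e^{-\beta w}\sum_{\alpha<\beta}a_{p,\alpha}e^{\alpha w}$ as a single sum of \emph{negative} powers of $e^w$ and exploit $\re w\ge\log\rho_p$ there; bounding $\sum_{\alpha<\beta}a_{p,\alpha}e^{\alpha w}$ crudely by its size at $\re w=\log R$ and then multiplying by $e^{-\beta\re w}\le\rho_p^{-\beta}$ would cost an extra factor $(1+p)^{\beta M_K}$ and produce the wrong exponent $(1+p)^{2\beta M_K}$.
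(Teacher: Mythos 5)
Your proof is correct and follows essentially the same route as the paper's: the same split of $S^\tau_p$ according to $\re w<\log\rho_p$ versus $\re w\ge\log\rho_p$, the same use of $\|F_p\|_{\rho_p}$ on the disk and of $\|f_p\|_{S^\tau_p}$ plus the tail-sum estimate on the sectorial part, and the same substitution $\rho_p^{-\beta}=(1+p)^{\beta M_K}/R^\beta$ with $C=4\|f\|_\tau$, $D=1/R$. Passing through the auxiliary function $h_{p,\beta}=e^{-\beta w}(f_p-\sum_{\alpha<\beta}a_{p,\alpha}e^{\alpha w})$ is only a cosmetic reformulation of the paper's direct estimates, and your added remark about $w=-\infty$ is a harmless extra detail the paper leaves implicit.
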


\begin{proof}
	Fix $p \in \NN$ and $\beta \ge 0$, and let $w \in S_p^\tau$.  We distinguish two cases:\medskip
	
	\noindent\textbf{Case 1:} $\re w < \log\rho_p$. Then 
	\begin{align*}
		\left|f_p(w) - \sum_{\alpha < \beta} a_{p,\alpha} e^{\alpha w}\right| &= \left|\sum_{\alpha \ge \beta} a_{p,\alpha} e^{\alpha w}\right| \\
		&\le \left|e^{\beta w}\right| \sum_{\alpha \ge \beta} |a_{p,\alpha}| \left|e^{(\alpha-\beta) w}\right| \\
		&\le \left|e^{\beta w}\right| \sum_{\alpha \ge \beta} |a_{p,\alpha}| (\rho_p)^{\alpha-\beta} \\
		&\le \frac{\left|e^{\beta w}\right|}{(\rho_p)^\beta} \|F_p\|_{\rho_p} \\
		&\le 2\left|e^{\beta w}\right| \frac{(p+1)^{\beta M_K}}{R^\beta r^p} \|f\|_\tau,
	\end{align*} which proves the estimate in this case. \medskip
	
	\noindent\textbf{Case 2:} $\re w \ge \log\rho_p$.  Then 
	\begin{equation*}
	\left|f_p(w) - \sum_{\alpha < \beta} a_{p,\alpha} e^{\alpha w}\right| \le \left|f_p(w)\right| + \sum_{\alpha < \beta} |a_{p,\alpha}|\left|e^{\alpha w}\right|,
	\end{equation*}
	so we further split up the estimate:
	\begin{align*}
		|f_p(w)| &\le \|f_p\|_{S_p^\tau} \\
		&\le \|f_p\|_{S_p^\tau} \frac{|e^{\beta w}|}{(\rho_p)^\beta} &\text{as } |e^{w}| \ge \rho_p \\
		&= \frac{(p+1)^{\beta M_K}}{R^\beta} \|f_p\|_{S_p^\tau} \left|e^{\beta w}\right| \\
		&\le \frac{(p+1)^{\beta M_K}}{R^\beta r^p} 2\|f\|_{\tau} \left|e^{\beta w}\right|,
	\end{align*}
	while
	\begin{align*}
		\sum_{\alpha < \beta} |a_{p,\alpha}| \left|e^{\alpha w}\right| &\le \left|e^{\beta w}\right| \sum_{\alpha < \beta} |a_{p,\alpha}| (\rho_p)^{\alpha-\beta} &\text{as } \alpha-\beta < 0 \text{ and } |e^{ w}| \ge \rho_p \\
		&= \frac{\left|e^{\beta w}\right|}{(\rho_p)^\beta} \sum_{\alpha < \beta} |a_{p,\alpha}|(\rho_p)^\alpha \\
		&\le \left|e^{\beta w}\right| \frac{(p+1)^{\beta M_K}}{R^\beta} \|F_p\|_{\rho_p} \\
		&\le \left|e^{\beta w}\right| \frac{(p+1)^{\beta M_K}}{R^\beta r^p} \cdot 2\|f\|_\tau.
	\end{align*}
	This completes the proof of Case 2 and therefore of the lemma.
\end{proof}

\begin{prop}[Gevrey estimates]\label{gevrey_estimates} \label{k-sum_asymp_lemma}
	For every closed log-subsector $\bar S_0$ of $S^\tau$, there exist $D,E>0$ such that, for each $\beta \ge 0$, $$\left|f(w) - \sum_{\alpha < \beta} a_\alpha e^{\alpha w}\right| \le DE^\beta \Gamma\left(\beta M_K\right) \left|e^{\beta w}\right| \quad\text{for } w \in \bar S_0.$$
\end{prop}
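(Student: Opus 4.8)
The plan is to obtain the Gevrey estimate for $f$ by summing, over $p\in\NN$, the uniform estimates for the approximants $f_p$ furnished by \cref{p-remainder}. Throughout, I would work with the representation $f=_\tau\sum_p f_p$ and the associated series $F_p(X)=\sum_\alpha a_{p,\alpha}X^\alpha$ fixed just before \cref{p-remainder} (so that $\sum_p\|F_p\|_{\rho^\tau_p}r^p$ and $\sum_p\|f_p\|_{S^\tau_p}r^p$ are both $<2\|f\|_\tau$), and with $a_\alpha=\sum_p a_{p,\alpha}$ and $F(X)=\sum_\alpha a_\alpha X^\alpha$ as in \eqref{F}. Since $\Gamma(\beta M_K)=+\infty$ when $\beta=0$ or $M_K=0$, I may assume $\beta>0$ and $M_K>0$.

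First I would record that $\bar S_0\subseteq S^\tau\subseteq S^\tau_p$ for every $p$ — a short check from the definitions: each defining log-sector $S(0,\log R,\theta k)$ with $k\in K\setminus\{0\}$ is contained in $\cl(S(0,\log R,\theta k))\subseteq S^{R,k}_p$, while $S^\tau\subseteq H(\log R)=H(\log\rho^{R,0}_p)\subseteq S^{R,0}_p$; intersecting over $k\in K$ (and invoking \cref{ms_domain_rmk} to dispose of the index $k=0$) gives the inclusion. Hence \cref{p-remainder} applies at every $w\in\bar S_0$. Next, because $\Delta$ is natural the set $E_\beta:=\Delta\cap[0,\beta)$ is finite, and it contains $\supp(F_p)\cap[0,\beta)$ for every $p$ as well as $\supp(F)\cap[0,\beta)$; combined with the absolute convergence $\sum_p|a_{p,\alpha}|<\infty$ established before \cref{p-remainder}, this lets me interchange a finite with an infinite sum:
\begin{equation*}
	\sum_{\alpha<\beta}a_\alpha e^{\alpha w}=\sum_{\alpha\in E_\beta}\Bigl(\sum_p a_{p,\alpha}\Bigr)e^{\alpha w}=\sum_p\;\sum_{\alpha<\beta}a_{p,\alpha}e^{\alpha w}.
\end{equation*}
Using this together with $f=\sum_p f_p$ on $S^\tau\supseteq\bar S_0$ and applying \cref{p-remainder} termwise, I obtain for $w\in\bar S_0$
\begin{equation*}
	\Bigl|f(w)-\sum_{\alpha<\beta}a_\alpha e^{\alpha w}\Bigr|\le\sum_p\Bigl|f_p(w)-\sum_{\alpha<\beta}a_{p,\alpha}e^{\alpha w}\Bigr|\le C D^\beta\bigl|e^{\beta w}\bigr|\sum_{p\ge0}\frac{(p+1)^{\beta M_K}}{r^p},
\end{equation*}
where $C,D>0$ are the constants from \cref{p-remainder}; the series on the left converges absolutely since $r>1$.

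The only genuine computation left is the elementary bound
\begin{equation*}
	\sum_{p\ge0}\frac{(p+1)^{\gamma}}{r^p}\le\frac{r^2}{\log r}\cdot\frac{\Gamma(\gamma+1)}{(\log r)^{\gamma}}\qquad(\gamma>0),
\end{equation*}
which I would prove by bounding each summand by $r$ times $\int_p^{p+1}(u+1)^\gamma r^{-u}\,du$ and then substituting $v=(u+1)\log r$ to identify $\int_0^\infty(u+1)^\gamma r^{-u}\,du$ with $\tfrac{r}{(\log r)^{\gamma+1}}\int_{\log r}^\infty v^\gamma e^{-v}\,dv\le\tfrac{r}{(\log r)^{\gamma+1}}\Gamma(\gamma+1)$. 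Applying this with $\gamma=\beta M_K$, using $\Gamma(\beta M_K+1)=\beta M_K\,\Gamma(\beta M_K)$ together with $\beta M_K\le M_K e^{\beta}$, and absorbing all $\beta$-independent constants and the $M_K$-th powers of $\log r$ into new constants, the penultimate display becomes
\begin{equation*}
	\Bigl|f(w)-\sum_{\alpha<\beta}a_\alpha e^{\alpha w}\Bigr|\le D E^\beta\,\Gamma(\beta M_K)\,\bigl|e^{\beta w}\bigr|\qquad(w\in\bar S_0)
\end{equation*}
for suitable $D,E>0$ (which in fact can be taken independent of $\bar S_0$), as required. I do not anticipate a real obstacle: \cref{p-remainder} already carries the analytic content, and what remains is the interchange of sums, justified by naturality of $\Delta$, together with the standard integral comparison for $\Gamma$; the only point requiring mild care is the bookkeeping of the various constants.
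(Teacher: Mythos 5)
Your argument follows the same strategy as the paper's proof: fix a representation $f=_\tau\sum_p f_p$, interchange the $p$-sum with the finite sum over $\alpha\in\Delta\cap[0,\beta)$, apply \cref{p-remainder} termwise, and then bound $\sum_p(p+1)^{\beta M_K}r^{-p}$. The interchange is justified exactly as you say (naturality of $\Delta$ plus the absolute summability $\sum_p|a_{p,\alpha}|<\infty$ established just before \cref{p-remainder}), and the inclusion $\bar S_0\subseteq S^\tau\subseteq S^\tau_p$ that you check explicitly is used implicitly in the paper. The one place where you diverge is the final elementary estimate: the paper quotes from \cite[Lemma~2.6]{Dries:2000mx} the bound $\sum_p(p+1)^{\gamma}r^{-p}\le C'(D')^\beta\beta^{\beta M_K}$ and then appeals to Stirling's formula to convert $\beta^{\beta M_K}$ into $\Gamma(\beta M_K)$, whereas you obtain $\Gamma(\gamma+1)/(\log r)^\gamma$ directly by the integral comparison $\sum_p(p+1)^\gamma r^{-p}\le r\int_0^\infty(u+1)^\gamma r^{-u}\,du$ and the substitution $v=(u+1)\log r$, then reduce via $\Gamma(\gamma+1)=\gamma\Gamma(\gamma)$ and $\beta M_K\le M_K e^\beta$. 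Your route is a little more self-contained, avoiding both the external citation and Stirling, and lands on the same conclusion. (Two tiny slips of no mathematical consequence: "the series on the left converges" should read "on the right," and one should note that when $\beta=0$ or $M_K=0$ the right-hand side is $+\infty$, so the restriction to $\beta,M_K>0$ is indeed harmless.)
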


\begin{proof}
	Let $D,E>0$ be obtained from \cref{p-remainder}, and let $\beta \ge 0$.  Since $\Delta \cap [0,\beta)$ is finite we have, for $w \in \bar S_0$,
	\begin{align*}
		\left|f(w) - \sum_{\alpha < \beta} a_\alpha e^{\alpha w}\right| &= \left|\left(\sum_p f_p(w)\right) - \sum_{\alpha < \beta} \left(\sum_p a_{\alpha,p}\right) e^{\alpha w}\right| \\
		&\le \sum_p \left|f_p(w) - \sum_{\alpha < \beta} a_{\alpha,p} e^{\alpha w}\right| \\
		&\le CD^\beta  \left(\sum_p \frac{(p+1)^{\beta M_K}}{r^p}\right) \left|e^{\beta w}\right| &\text{by \cref{p-remainder}}.
	\end{align*}
	Since $\sum_p \frac{(p+1)^{\beta M_K}}{r^p} \le C'(D')^\beta \beta^{\beta M_K}$ for some $C',D'>0$  (see, for instance, the proof of \cite[Lemma 2.6]{Dries:2000mx}), the proposition now follows from Stirling's formula for $\Gamma$ (see \cite{MR0165148}).
\end{proof}

\cref{k-sum_asymp_lemma} implies that $F(e^w)$ is an asymptotic expansion of $f$ at $-\infty$; hence, it is uniquely determined by $f$ (and is, in particular, independent of the particular sequence $\{f_p\}$), and we write $Tf(X) := F(X)$.  The map $T:\G_\tau \into \Ps{C}{X^*}$ is a $\CC$-algebra homomorphism.

\begin{nrmk}\label{complete_rmk}
	Standard methods for proving topological completeness of function spaces (see e.g. Rudin's Real and Complex Analysis) show that the normed algebra $\left(\G_\tau,\|\cdot\|_\tau\right)$ is complete; we leave the details to the reader.
\end{nrmk}
	
\begin{expl} \label{convergent_expl}
	Assume that $K=\{0\}$. If $f \in \G_\tau$, then $Tf$ converges, $\|Tf\|_R < \infty$ and $f = \bar{Tf}$.  To see this, let $f =_\tau \sum_p f_p$ with $Tf_p = \sum_{\alpha} a_{p,\alpha} X^\alpha$ for each $p$; then $Tf = \sum_{\alpha} a_{\alpha}X^\alpha$ with $a_\alpha = \sum_p a_{p,\alpha}$ as above.  Since $\rho_p^\tau = R$ for each $p$, the assumption $\sum_p \|Tf_p\|_R r^p < \infty$ implies that $\sum_{p,\alpha} |a_{p,\alpha}| R^\alpha < \infty$.  This implies that the family $\{a_{p,\alpha} e^{\alpha w}\}$ is summable on $\cl H(\log R)$; in particular, the order of summation can be changed.  Thus, we have for $w \in \cl H(\log R)$ that
	\begin{equation*}
		f(w) = \sum_p f_p(w) = \sum_p \sum_\alpha a_{p,\alpha} e^{\alpha w} = \sum_\alpha \left(\sum_p a_{p,\alpha}\right) e^{\alpha w} = \sum_\alpha a_\alpha e^{\alpha w} = \bar{Tf}(w),
	\end{equation*}
	as claimed.
	
	Conversely, if $F \in \Ps{R}{X^*}$ has natural support and satisfies $\|F\|_R < \infty$, then (the appropriate restriction of) the function $\bar{F}$ belongs to $\G_{\tau}$, where $\tau = \left(K,R, r, \theta,\Delta\right)$ with $K$, $r$, $\theta$ and $\Delta \supseteq \supp(F)$ arbitrary.  To see this, simply take $f_0 := \bar F$ and $f_p:= 0$, for $p > 0$.
\end{expl}

\begin{nrmk} \label{ms_domain_rmk 2}
	Assume that $0 \in K$ and $|K|>1$, and set $K':= K \setminus\{0\}$ and $\tau':= (K',R,r,\theta,\Delta)$.  Then by \cref{ms_domain_rmk}, we have $\G_\tau = \G_{\tau'}$.
\end{nrmk}

	Recall from \ref{M_K} that $\mu_K= \min K \in [0,\infty)$.  If $\mu_K \ge 1$, then ${\theta \mu_K} > \frac\pi2$, so by \cref{Borel_prop}, the function $\bo f$ is defined on the sector $S\left(0, \infty, {\theta \mu_K} - \frac\pi2\right)$.  
	%Moreover, if $k < 1$, then $\frac1k-1 = \frac1{k/(1-k)}$, so that $\bo f$ is defined on $S\left(0,\infty,\frac{\theta}{2k/(1-k)}\right)$.  
	The next proposition explains in more detail what happens when we apply the Borel transform to $f$.

\begin{prop} \label{lbt_on_pre-summable}
	Let $f \in \G_\tau$, and assume that $\mu_K \ge 1$ (in particular, $\bo f$ is well defined).  Let $R' \in (0,R)$ and $r' \in (1,r)$ be such that $R' \le \frac Re \log(r/r')$, and set $K':= \set{k-1:\ k \in K}$ and $\tau':= \left(K',R',r',\theta,\Delta\right)$.  Then $\bo f$ belongs to $\G_{\tau'}$ and satisfies $T(\bo f) = \bo(Tf)$.
\end{prop}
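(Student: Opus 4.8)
The plan is to log-Borel-transform $f$ term by term. First I would fix a representation $f =_\tau \sum_p f_p$ with series $F_p(X) = \sum_\alpha a_{p,\alpha}X^\alpha \in \Pc{C}{X^*}$ supported in $\Delta$ and with $\sum_p\|F_p\|_{\rho^\tau_p}r^p$ and $\sum_p\|f_p\|_{S^\tau_p}r^p$ both finite, and set $G_p := \bo F_p$, the formal Borel transform (still supported in $\Delta$, hence natural). Using \cref{ms_domain_rmk 2} I may replace $\tau'$ by $(K'\setminus\{0\},R',r',\theta,\Delta)$ without changing $\G_{\tau'}$, so I will assume $0\notin K'$; the degenerate case $K=\{1\}$, $K'=\{0\}$ I would treat at the very end.

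Next I would identify the term-wise transforms. Since $f_p$ coincides with $\bar{F_p}$ on the log-disk $H(\log\rho^\tau_p)$, and $\bo_{d'}f_p$ can be computed along $\partial\bar S_0$ for any closed log-subsector $\bar S_0$ of the domain with opening $>\pi/2$ — in particular one of log-radius $<\log\rho^\tau_p$, which therefore sits inside $H(\log\rho^\tau_p)$ — it follows that $\bo f_p = \bo\bar{F_p}$, which by \cref{lbt_asymptotic_prop} equals $\bar{G_p}$ and is log-holomorphic on all of $\bar\CC$. Moreover $S^\tau = S(0,\log R,\theta\mu_K)$ is a sector of opening $\theta\mu_K>\pi/2$ (here $\mu_K\ge1$ is used) contained in every $S^\tau_p$; using a fixed closed log-subsector of $S^\tau$ of opening $>\pi/2$ as contour and $\sum_p\|f_p\|_{S^\tau_p}<\infty$ (true because $r>1$), I would interchange sum and integral to get $\bo f = \sum_p\bar{G_p}$ on the entire domain $S(0,\infty,\theta\mu_K-\tfrac\pi2)$ of $\bo f$, which — as $\theta>\tfrac\pi2$ — contains $S^{\tau'}$.

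It then remains to verify that the sequence $\bigl(\bar{G_p}\rest{S^{\tau'}_p},\,G_p\bigr)_p$ witnesses $\bo f\in\G_{\tau'}$, i.e.\ the two finiteness conditions of \cref{tougeron_sum}. For $\sum_p\|G_p\|_{\rho^{\tau'}_p}(r')^p<\infty$, I would use \eqref{lbt_norm_eq} to get $\|G_p\|_{\rho^{\tau'}_p}\le C\!\bigl(\rho^{\tau'}_p/\rho^\tau_p\bigr)\|F_p\|_{\rho^\tau_p}$ with $\rho^{\tau'}_p/\rho^\tau_p=\tfrac{R'}{R}(1+p)$; the Binet/Stirling bound $C(\sigma)\le e^{e\sigma}$ together with the hypothesis $R'\le\tfrac Re\log(r/r')$ then bounds $C\!\bigl(\rho^{\tau'}_p/\rho^\tau_p\bigr)$ by $(r/r')^{1+p}$, so the series is dominated by $\tfrac r{r'}\sum_p\|F_p\|_{\rho^\tau_p}r^p<\infty$. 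For $\sum_p\|\bar{G_p}\|_{S^{\tau'}_p}(r')^p<\infty$ I would split $S^{\tau'}_p$ into the central log-disk $H(\log\rho^{\tau'}_p)$ — on which $|\bar{G_p}|\le\|G_p\|_{\rho^{\tau'}_p}$, already covered — and, for each $k\in K$, the ``step'' of opening $\theta(k-1)$, which extends to log-radius $\log\rho^{R',\pre(k)-1}_p$ ($\pre(k)$ the predecessor of $k$ in $K$, read as $\log R$ resp.\ $\log R'$ when $k=\mu_K$). On such a step I would bound $\bar{G_p}=\bo f_p$ by applying \cref{lbt_sup_estimate} to $f_p$ restricted to a subsector of $S^\tau_p$ of opening just below $\theta k$ (available because $S^\tau_p$ has opening $\ge\theta k$ for log-radii $<\log\rho^{R,\pre(k)}_p$) and of log-radius tending to $\log\rho^{R,\pre(k)}_p$; since $\theta>\tfrac\pi2$, the output Borel domain has opening $>\theta(k-1)$ and covers the step. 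The crucial point is that for every $k$ the relevant log-radius difference is at most $\log\rho^{R',\pre(k)-1}_p-\log\rho^{R,\pre(k)}_p=\log(R'/R)+\log(1+p)$, so \cref{lbt_sup_estimate} bounds $\bar{G_p}$ on the step by a uniform constant times $(1+p)\,e^{\frac{R'}{R}(1+p)}\|f_p\|_{S^\tau_p}$; and since $\tfrac Re\log(r/r')<R\log(r/r')$, the hypothesis yields $r'e^{R'/R}<r$, making $\sum_p(1+p)e^{\frac{R'}{R}(1+p)}\|f_p\|_{S^\tau_p}(r')^p$ converge.

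From $\bo f=_{\tau'}\sum_p\bar{G_p}$ I conclude $\bo f\in\G_{\tau'}$; and since $T$ only reads off Taylor coefficients, $T(\bo f)=\sum_p T(\bar{G_p})=\sum_p\bo F_p=\bo\bigl(\sum_p F_p\bigr)=\bo(Tf)$, using $a_\alpha=\sum_p a_{p,\alpha}$ and that $\bo$ divides the $X^\alpha$-coefficient by $\Gamma(\alpha)$. In the degenerate case $K=\{1\}$ one has $\rho^{\tau'}_p=R'$, and the same bound on $C$ gives $\sum_p\|\bo F_p\|_{R'}<\infty$, so $\bo(Tf)$ converges with $\|\bo(Tf)\|_{R'}<\infty$ and its log-sum lies in $\G_{\tau'}$ by the complex analogue of \cref{convergent_expl}, again with $T(\bo f)=\bo(Tf)$. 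I expect the main obstacle to be the bookkeeping in the second finiteness condition: the step-structure of $S^{\tau'}_p$ has to be dovetailed with suitable subsectors of $S^\tau_p$ so that each use of \cref{lbt_sup_estimate} contributes only a factor of the shape $(1+p)\,e^{O(1)\cdot p}$ rather than $e^{p^{2}}$ (which a single coarse estimate would produce), and it is exactly here that $\theta>\tfrac\pi2$ and the normalisation $R'\le\tfrac Re\log(r/r')$ come into play.
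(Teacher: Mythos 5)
Your proposal is correct and follows essentially the same route as the paper: term-wise log-Borel transform of the $f_p$ (justified by interchanging sum and contour integral), the Binet/Stirling estimate on $C(\sigma)=\max_\alpha \sigma^\alpha/\Gamma(\alpha)$ combined with the normalisation $R'\le \frac Re\log(r/r')$ for the formal-norm finiteness, and a step-by-step covering of $S^{\tau'}_p$ with \cref{lbt_sup_estimate} applied on each step to get the sup-norm finiteness, the hypothesis again ensuring $r'e^{R'/R}<r$. The one imprecision is the parenthetical claim that $S^{\tau'}$ is contained in $S(0,\infty,\theta\mu_K-\tfrac\pi2)$: after you reduce via \cref{ms_domain_rmk 2} to $K'\setminus\{0\}$ when $\mu_K=1$, the opening of $S^{\tau'}$ is $\theta\min(K'\setminus\{0\})$, which need not be $\le\theta\mu_K-\tfrac\pi2$; but this is harmless, since what is actually needed (and what your estimates deliver) is that $\sum_p\bar{G_p}$ defines an element of $\G_{\tau'}$ agreeing with $\bo f$ on the common domain, exactly as the paper concludes via integration theory.
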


\begin{proof}
	For $p \in \NN$, we set $\rho_p:= \rho_p^\tau$ and  $\sigma_p := \rho_p^{\tau'}$.  Then for $\alpha \ge 0$, we have $$\frac{(\sigma_p/\rho_p)^\alpha}{\Gamma(\alpha)} = \left(\frac {R'}R\right)^\alpha \frac{(p+1)^\alpha}{\Gamma(\alpha)}.$$  
	
	\noindent\textbf{Claim:} There exists $C = C(r,r',R,R') > 0$ such that $$\left(\frac {R'}R\right)^\alpha\frac{(p+1)^\alpha}{\Gamma(\alpha)}\left(\frac {r'}r\right)^p \le C,$$ for all $p \in \NN$ and $\alpha \ge 0$. \smallskip
	
	To see the claim, note that the function $x \mapsto f_\alpha(x):= (x+1)^\alpha \left(\frac {r'}r\right)^x$ attains its maximum at $x_\alpha = \frac\alpha{\log(r/r')}-1$; so this maximum is $$f_\alpha(x_\alpha) = \frac r{r'} A^\alpha \alpha^\alpha,$$ with $A:= \frac{(r'/r)^{1/\log(r/r')}}{\log(r/r')} = (e\log(r/r'))^{-1}$ independent of $\alpha$.  From Binet's second formula we get a constant $C''>0$ such that, for all $\alpha \ge 0$,
	\begin{equation*}
		\alpha^\alpha \le \frac{\sqrt{\alpha}e^\alpha}{C'' e^{\phi(\alpha)}}\Gamma(\alpha),
	\end{equation*}
	where $\phi(x)$ is the Stirling function.  Since the latter is bounded at $\infty$, there is a constant $C'>0$ such that 
	\begin{equation*}
		\alpha^\alpha \le C'\sqrt{\alpha}e^\alpha\Gamma(\alpha) \le C' e^{2\alpha} \Gamma(\alpha).
	\end{equation*}
	Therefore,
	\begin{equation*}
		\left(\frac {R'}R\right)^\alpha (p+1)^\alpha\left(\frac {r'}r\right)^p \le \frac r{r'} \left(\frac {R'}R\right)^\alpha A^\alpha \alpha^\alpha \le C' \frac r{r'} \left(\frac {R'}R K e^2\right)^\alpha \Gamma(\alpha) \le C \Gamma(\alpha),
	\end{equation*}
	where $C:= C'\frac r{r'}$, because our assumptions on $r'$ and $R'$ imply that $Ae^2 \le \frac R{R'}$.  This proves the claim.
	\smallskip
	
	It follows from the claim that 
	\begin{equation}\label{sup_norm_eq}
		\|\bo F_p\|_{\sigma_p}\left(\frac {r'}r\right)^p = \sum \frac{(\sigma_p/\rho_p)^\alpha}{\Gamma(\alpha)}\left(\frac {r'}r\right)^p |a_{p,\alpha}|\rho_p^\alpha \le C\|F_p\|_{\rho_p}
	\end{equation} for each $p \in \NN$, so that 
	\begin{equation} \label{first_norm_ineq}
		\sum_p \|\bo F_p\|_{\sigma_p} (r')^p \le C \sum_p \|F_p\|_{\rho_p} r^p < \infty.
	\end{equation} 
		
	Next, we show $\sum \|\bo f_p\|_{S^{\tau'}_p} (r')^p < \infty$.  If $|K| = \mu_K = 1$, then \eqref{first_norm_ineq} also proves that $\bo F$ is convergent and that $\sum \|\bo f_p\|_{H(\log R')} (r')^p < \infty$; this settles the assertion in this case.
	 
	So assume that $\mu_K > 1$ or $|K| > 1$.  By \eqref{first_norm_ineq}, it suffices to show that $\sum \|\bo f_p\|_{S^{\tau'}} (r')^p < \infty$ and, for $k \in K\setminus\{M_K\}$, that $\sum \|\bo f_p\|_{S^k_p} (r')^p < \infty$, where we set  $$\sigma^k_p:= \rho_p^{R',k-1} \quad\text{and}\quad S^k_p := S\left(0, \log\sigma^k_p, {\theta(k-1)}\right).$$
	
	For the first estimate: since ${\theta(\mu_K-1)}< {\theta \mu_K} - \frac\pi2$, \cref{lbt_sup_estimate} shows that $\left\|\bo f_p\right\|_{S^{\tau'}} \le  C\left\|f_p\right\|_{S^\tau}$, for some constant $C>0$ that only depends on $\theta$ and $\theta'$.
	
	For the second estimate: fix $k \in K\setminus\{M_K\}$ and set $\rho^k_p:= \rho_p^{R,k}$, $k' := \min(K \setminus [\mu_K,k])$ and $T^k_p:= S\left(0,\log\rho^k_p,{\theta k'}\right)$.  Since ${\theta(k'-1)} < {\theta k'} - \frac\pi2$, Lemma 1.3 shows that $$\left\|\bo f_p\right\|_{S^k_p} \le C \left\|f_p\right\|_{T^k_p} e^{\sigma^k_p / \rho^k_p} \frac{\sigma^k_p}{\rho^k_p} = C\|f_p\|_{T^k_p} \left(e^{R'/R}\right)^{1+p} \frac{R'}R (1+p),$$ for some constant $C>0$ independent of $p$.  
	Now note that our assumptions on $r'$ and $R'$ imply that $e^{R'/R}\cdot \frac{r'}r < 1$; therefore, $D:= \max_p (1+p) \left(e^{R'/R}\right)^{(1+p)}\left(\frac {r'}r\right)^p < \infty$.  It follows that
	\begin{equation*}
	\sum_p \|\bo f_p\|_{S^k_p} (r')^p \le D \sum_p \left\|f_p\right\|_{T^k_p} r^p < \infty,
	\end{equation*}
	as claimed.

	Finally, it remains to show that $\bo f =_{\tau'} \sum_p \bo f_p$ and $T(\bo f) = \bo(Tf)$: set $g:= \sum_p \bo f_p$.  The above estimates show that $g \in \G_{\tau'}$ with $Tg = \sum_p \bo F_p = \bo(Tf)$, so we need to show that $\bo f = g$.  Since $f = \sum_p f_p$ uniformly in $S^\tau$, it follows from integration theory that $\bo f = \bo\left(\sum_p f_p\right) = \sum_p \bo f_p = g$, as required.
\end{proof}

For later purposes, we note the following special case of \cref{lbt_on_pre-summable}:

\begin{cor} \label{multisum_cor}
	Let $f \in \G_\tau$, and assume that $|K| \ge 1$ and $\mu_K = 1$.  Let $R' \in (0,R)$ and $r' \in (1,r)$ be such that $R' \le \frac Re \log(r/r')$, and set $K':= \set{k-1:\ k \in K}$ and $\tau':= \left(K',R',r',\theta,\Delta\right)$.  Then $\bo f$ belongs to $\G_{\tau'}$ and satisfies $T(\bo f) = \bo(Tf)$. 
\end{cor}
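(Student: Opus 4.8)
The plan is essentially to observe that the statement is a direct specialization of \cref{lbt_on_pre-summable}. Since $K$ is nonempty by convention, the hypothesis $|K| \ge 1$ is automatic, and $\mu_K = 1$ is a particular case of the hypothesis $\mu_K \ge 1$ of \cref{lbt_on_pre-summable}; moreover the auxiliary data $R'$, $r'$, $K'$ and $\tau'$ are defined here by exactly the same formulas. Hence the conclusions that $\bo f \in \G_{\tau'}$ and $T(\bo f) = \bo(Tf)$ follow at once by applying that proposition. No step presents any real difficulty, and there is no obstacle to speak of: the whole content has already been established in the proof of \cref{lbt_on_pre-summable} (including the degenerate instance, treated there under the heading ``$|K| = \mu_K = 1$'', where $\bo f$ is shown to be the log-sum of a convergent series).

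The reason for recording this case separately is the effect on the index set: when $\mu_K = 1$ one has $0 = \mu_K - 1 \in K'$. If $|K| > 1$, then by \cref{ms_domain_rmk 2} we may replace $\tau'$ by the datum with index set $K' \setminus \{0\}$ without changing the algebra, so one summability level has been removed; and in the extreme case $K = \{1\}$ we get $K' = \{0\}$, whence by \cref{convergent_expl} $\bo f$ is simply the log-sum of a \emph{convergent} generalized power series with natural support. Thus applying the log-Borel transform to an element of $\G_\tau$ with $\mu_K = 1$ lowers the number of levels by one; this is precisely the reduction step that will be iterated later to analyze generalized multisummable functions one level at a time, which is why it is convenient to have it stated as a corollary.
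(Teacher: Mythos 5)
Your proposal is correct and matches the paper's own proof in both substance and route: the paper likewise deduces the corollary directly from \cref{lbt_on_pre-summable}, noting that $\mu_K - 1 = 0$ and invoking \cref{ms_domain_rmk 2} to address the fact that $0 \in K'$. Your added explanation of the role of $K' \setminus \{0\}$ and the degenerate case $K = \{1\}$ is a faithful reading of what the remark buys here.
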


\begin{proof}
	Since $\mu_K = 1$, we have $\mu_K-1 = 0$.  So the corollary follows from \cref{lbt_on_pre-summable} and \cref{ms_domain_rmk 2}.
\end{proof}

\subsection{Ramified logarithmic transforms} \label{ramified_section}

Let $\lambda>0$.  In the classical situation, the ramified Borel transform $\bo^\lambda f$ of a function $f$ is obtained from $\bo f$ by the change of variables $z \mapsto z^\lambda$.  In the logarithmic chart, this means that $\bo^\lambda f$ is obtained from $\bo f$ by the change of variables $w \mapsto \lambda w$.  

\subsubsection{Ramified logarithmic Borel transform} \label{k-lbt}
Let $d,\frak r \in \RR$ and $\theta > \pi$, and write $S = S(d,\frak r,\theta\lambda)$.  We denote by $\mm_\lambda:\CC \into \CC$ the \textbf{logarithmic ramification} map defined by $\mm_\lambda(w):= \lambda w$.

Let $f:S \into \CC$ be such that $f\rest{\tilde S}$ is bounded and log-holomorphic, for every closed log-subsector $\tilde S$ of $S$.  Then the map $f \circ \mm_{\lambda}: S(d/\lambda,\frak r/\lambda,\theta) \into \CC$ has logarithmic Borel transform $\bo(f \circ \mm_{\lambda}):S\left(d/\lambda,\infty,\theta-\frac\pi2\right) \into \CC$.  We define the \textbf{log-$\lambda$-Borel transform} $\bo^\lambda f: S\left(d,\infty,{\theta\lambda} - \frac{\pi\lambda}{2}\right) \into \CC$ of $f$ by
\begin{equation*}
\bo^\lambda f := \left(\bo (f \circ \mm_{\lambda})\right) \circ \mm_{1/\lambda}.
\end{equation*}

%\begin{rmk}
%	This way of defining the $k$-Borel transform is a bit tricky in the situation of classical multisummability: while the function $f$ in this case has an asymptotic power series expansion, the function $g$ has in general only a generalized power series expansion (so is not in the classical multisummable class).  I suppose this is why one usually defines $\bo^k f$ by its integral formula instead, which does not involve the function $g$.  However, in our situation, this is not an issue, because generalized power series (with natural support) are closed under ramifications.  The same remark applies to the $k$-Laplace transform defined below.
%\end{rmk}

We immediately obtain the following from \cref{Borel_prop} and \cref{power_expl}.

\begin{cor}\label{k-Borel_cor}
	Let $f:S \into \CC$ be such that $f\rest{\tilde S}$ is bounded and log-holomorphic, for every closed log-subsector $\tilde S$ of $S$, and set $S':= S\left(d,\infty,{\theta\lambda} -\frac{\pi\lambda}{2}\right)$.
	\begin{enumerate}
		\item For every closed log-subsector $\tilde S$ of $S'$, the function $(\bo^\lambda f)\rest{\tilde S}$ is log-holomorphic and there exist $C,M>0$ such that $$|(\bo^\lambda f)(w)| \le C e^{De^{(\re w)/\lambda}} \quad\text{for } w \in \tilde S.$$
		\item Let $\alpha \ge 0$, and assume that for every closed log-subsector $\tilde S$ of $S$, we have $|f(w)| = O\left(e^{\alpha\re w}\right)$ as $w \to -\infty$ in $\tilde S$.  Then, for every closed log-subsector $\tilde S$ of $S'$, we have $|(\bo^\lambda f)(w)| = O\left(e^{\alpha\re w}\right)$ as $w \to -\infty$ in $\tilde S$.
		\item For $\alpha \ge 0$, we have $\bo^\lambda p_\alpha =  \frac{p_\alpha}{\Gamma(\alpha\lambda)}$.\qed
	\end{enumerate}
\end{cor}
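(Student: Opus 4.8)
The plan is to deduce \cref{k-Borel_cor} directly from the definition $\bo^\lambda f = (\bo(f \circ \mm_\lambda)) \circ \mm_{1/\lambda}$ by transporting the three statements of \cref{Borel_prop} through the conjugation by the ramification map $\mm_\lambda$, together with the computation in \cref{power_expl}.

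First I would set $h := f \circ \mm_\lambda$, which is defined on $S(d/\lambda, \frak r/\lambda, \theta)$ (since $\mm_\lambda$ multiplies real parts, imaginary parts, and aperture all by $\lambda$), and observe that $\tilde S \mapsto \mm_\lambda(\tilde S)$ is a bijection between closed log-subsectors of $S(d/\lambda,\frak r/\lambda,\theta)$ and closed log-subsectors of $S$, so the hypotheses on $f$ translate verbatim into the hypotheses on $h$ required by \cref{Borel_prop}. Since $\theta > \pi$ gives $\theta > \pi/2$, \cref{Borel_prop} applies to $h$, yielding a log-holomorphic $\bo h$ on $S(d/\lambda,\infty,\theta-\frac\pi2)$. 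Then $\bo^\lambda f = (\bo h) \circ \mm_{1/\lambda}$ is log-holomorphic on $\mm_\lambda(S(d/\lambda,\infty,\theta-\frac\pi2)) = S(d,\infty,\theta\lambda - \frac{\pi\lambda}2) = S'$, and restriction to a closed log-subsector $\tilde S$ of $S'$ corresponds under $\mm_{1/\lambda}$ to a closed log-subsector of $S(d/\lambda,\infty,\theta-\frac\pi2)$. For part (1), \cref{Borel_prop}(2) gives $C, D > 0$ with $|\bo h(v)| \le C e^{D e^{\re v}}$ on that subsector; substituting $v = \mm_{1/\lambda}(w) = w/\lambda$, so $\re v = (\re w)/\lambda$, gives exactly $|\bo^\lambda f(w)| \le C e^{D e^{(\re w)/\lambda}}$ (I would note the statement's $M$ should read $D$). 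For part (2), the hypothesis $|f(w)| = O(e^{\alpha \re w})$ on closed log-subsectors of $S$ translates (via $w = \lambda v$, $\re w = \lambda \re v$) into $|h(v)| = O(e^{\alpha\lambda \re v})$ on closed log-subsectors of $S(d/\lambda,\frak r/\lambda,\theta)$; applying \cref{Borel_prop}(3) with $\alpha\lambda$ in place of $\alpha$ gives $|\bo h(v)| = O(e^{\alpha\lambda\re v})$, and substituting $v = w/\lambda$ recovers $|\bo^\lambda f(w)| = O(e^{\alpha\re w})$.

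For part (3), I would compute directly: $p_\alpha \circ \mm_\lambda(w) = e^{\alpha\lambda w} = p_{\alpha\lambda}(w)$, and by \cref{power_expl} (via the relation $\bo(p_\beta) = p_\beta/\Gamma(\beta)$ derived there) we have $\bo(p_{\alpha\lambda}) = p_{\alpha\lambda}/\Gamma(\alpha\lambda)$; hence $\bo^\lambda p_\alpha = (p_{\alpha\lambda}/\Gamma(\alpha\lambda)) \circ \mm_{1/\lambda}$, and $p_{\alpha\lambda}(w/\lambda) = e^{\alpha\lambda \cdot w/\lambda} = e^{\alpha w} = p_\alpha(w)$, so $\bo^\lambda p_\alpha = p_\alpha/\Gamma(\alpha\lambda)$ as claimed. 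Strictly, \cref{power_expl} is stated for $p_\beta$ with $\beta \ge 0$ and \cref{Borel_prop} requires the $O$-growth bound, but $|p_\beta(w)| = e^{\beta\re w} \to 0$ as $\re w \to -\infty$ when $\beta > 0$, covering the relevant case $\alpha\lambda > 0$; the case $\alpha = 0$ is handled separately by noting $p_0 \equiv 1$ and $\Gamma$ has a pole, so both sides should be interpreted as $0$, consistent with the convention used elsewhere.

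The main obstacle is essentially bookkeeping rather than a genuine difficulty: one must be careful that the aperture condition $\theta > \pi$ (rather than just $\theta > \pi/2$) is exactly what is needed so that, after dividing by $\lambda$, the aperture $\theta$ still exceeds $\pi/2$ for \emph{all} $\lambda$ — wait, this only works for $\lambda$ bounded; in fact here $\theta > \pi$ is imposed in \cref{k-lbt} precisely to guarantee $\theta\lambda - \frac{\pi\lambda}2 = \lambda(\theta - \frac\pi2) > 0$ and that $\bo(f\circ\mm_\lambda)$ is defined, i.e. $\theta > \frac\pi2$, which holds. I would double-check the direction shift: $\mm_\lambda$ sends the direction-$d/\lambda$ data to direction-$d$ data, so all the sectors are centered correctly, and the formula $\bo^\lambda f: S(d,\infty,\theta\lambda-\frac{\pi\lambda}2)\into\CC$ in the definition matches $\mm_\lambda$ applied to the domain of $\bo(f\circ\mm_\lambda)$. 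No step requires new analytic input beyond \cref{Borel_prop} and \cref{power_expl}, so the proof is a short chain of substitutions.
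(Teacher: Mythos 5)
Your proof is correct and takes essentially the same route as the paper, which states the corollary as immediate from \cref{Borel_prop} and \cref{power_expl}; you have simply written out the conjugation-by-$\mm_\lambda$ bookkeeping that makes it immediate. The observations that the statement's $M$ should read $D$, and that the $\alpha=0$ case is handled by the convention $1/\Gamma(0)=0$ built into the formal Borel transform, are both apt.
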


\subsubsection{Ramified logarithmic Laplace transform} \label{k-llt}

Let $\varphi > 0$ and set $S:= S(d,\infty,\varphi\lambda)$.  Let $f:S \into \CC$ be a function, and assume that for every closed log-subsector $\tilde S$ of $S$, the restriction $f\rest{\tilde S}$ is log-holomorphic and there exist $C,D>0$ such that $|f(w)| \le Ce^{De^{(\re w)/\lambda}}$ for $w \in \tilde S$.  
%We let $$U^k(d,D) := \set{z \in \CC:\ \cos(k\arg z - d) > D|z|^k}$$ be the \textbf{$k$-Borel disk} of diameter $\frac1D$ touching the origin and centered on the ray in direction $d$.  Correspondingly, we let $$V^k(d,D):= \set{w \in \CC:\ \cos(k\im w - d) > De^{k\re w}}$$ the \textbf{logarithmic $k$-Borel disk in the direction $d$ of extent} $-\log D$.  Note that $U^k(d,D) = \exp(V^k(d,D))$.  

Let also $\theta \in (0,\varphi)$; then the map $f \circ \mm_{\lambda}:S\left(d/\lambda,\infty,\theta\right) \into \CC$ has logarithmic Laplace transform $\la(f \circ \mm_{\lambda}): S\left(d/\lambda,\frak r/\lambda,\theta + \frac\pi2\right) \into \CC$, for some $\frak r \le \log(D)\lambda$.  We define the \textbf{log-$\lambda$-Laplace transform} $\la^\lambda f:S\left(d,\frak r,{\theta\lambda} + \frac{\pi\lambda}{2}\right) \into \CC$ of $f$ by
$$\la^\lambda f:= \left(\la(f \circ \mm_{\lambda})\right) \circ \mm_{1/\lambda}.$$  We immediately obtain the following from \cref{Laplace_prop} and \cref{power_expl}.

\begin{cor}\label{k-Laplace_cor}
	Let $f:S \into \CC$ be a function, and assume that for every closed log-subsector $\tilde S$ of $S$, the restriction $f\rest{\tilde S}$ is log-holomorphic and there exist $C,D>0$ such that $|f(w)| \le Ce^{De^{(\re w)/\lambda}}$ for $w \in \tilde S$.  Let also $\theta \in (0,\varphi)$, and let $\frak r \le \log(D)\lambda$ be as above and set $S':= S\left(d,\frak r,{\theta\lambda} + \frac{\pi\lambda}{2}\right)$.
	\begin{enumerate}
		\item Let $\alpha \ge 0$, and assume that for every closed log-subsector $\tilde S$ contained in $S$, we have $|f(w)| = O\left(e^{\alpha\re w}\right)$ as $w \to -\infty$ in $\tilde S$.  Then, for every closed log-subsector $\tilde S$ contained in $S'$, we have $|(\la^\lambda f)(w)| = O\left(e^{\alpha\re w}\right)$ as $w \to -\infty$ in $\tilde S$.
		\item For $\alpha \ge 0$, we have $\la^\lambda p_\alpha =  \Gamma(\alpha\lambda)p_\alpha$.\qed
	\end{enumerate}
\end{cor}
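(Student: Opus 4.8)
The plan is to obtain both parts by unraveling the definition $\la^\lambda f = \left(\la(f\circ\mm_\lambda)\right)\circ\mm_{1/\lambda}$ and applying \cref{Laplace_prop} and \cref{power_expl} to the function $f\circ\mm_\lambda$. The two elementary facts that make this work are that $\re\!\left(\mm_{1/\lambda}(w)\right) = (\re w)/\lambda$ and that $\mm_{1/\lambda}$, being multiplication by the positive scalar $1/\lambda$, is an $\RR$-linear bijection of $\bar\CC$ that carries closed log-subsectors of $S' = S\!\left(d,\frak r,\theta\lambda+\frac{\pi\lambda}2\right)$ bijectively onto closed log-subsectors of $S\!\left(d/\lambda,\frak r/\lambda,\theta+\frac\pi2\right)$, the domain on which $\la(f\circ\mm_\lambda)$ was produced just before the statement. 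As was noted there, $f\circ\mm_\lambda$ satisfies the hypotheses of \cref{Laplace_prop} on $S(d/\lambda,\infty,\varphi)$ with the same constants $C,D$, since $\left|(f\circ\mm_\lambda)(w)\right| = |f(\lambda w)| \le Ce^{De^{\re(\lambda w)/\lambda}} = Ce^{De^{\re w}}$.

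For part (1), I would first note that the hypothesis $|f(\zeta)| = O\!\left(e^{\alpha\re\zeta}\right)$ as $\zeta\to-\infty$ in closed log-subsectors of $S$ becomes, after the substitution $\zeta = \lambda w$, the estimate $\left|(f\circ\mm_\lambda)(w)\right| = O\!\left(e^{\alpha\lambda\re w}\right)$ as $w\to-\infty$ in closed log-subsectors of $S(d/\lambda,\infty,\varphi)$. Then \cref{Laplace_prop}(2), applied with $\alpha\lambda$ in place of $\alpha$, yields $\left|\la(f\circ\mm_\lambda)(w)\right| = O\!\left(e^{\alpha\lambda\re w}\right)$ as $w\to-\infty$ in closed log-subsectors of $S(d/\lambda,\frak r/\lambda,\theta+\frac\pi2)$. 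Finally, for a closed log-subsector $\tilde S$ of $S'$ and $w\in\tilde S$ we have $(\la^\lambda f)(w) = \la(f\circ\mm_\lambda)(w/\lambda)$, and $w/\lambda$ ranges over a closed log-subsector of $S(d/\lambda,\frak r/\lambda,\theta+\frac\pi2)$ and tends to $-\infty$ exactly when $w$ does; hence $\left|(\la^\lambda f)(w)\right| = O\!\left(e^{\alpha\lambda\re(w/\lambda)}\right) = O\!\left(e^{\alpha\re w}\right)$, as claimed.

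For part (2), since $(p_\alpha\circ\mm_\lambda)(w) = e^{\alpha\lambda w} = p_{\alpha\lambda}(w)$ we have $p_\alpha\circ\mm_\lambda = p_{\alpha\lambda}$, so \cref{power_expl} (valid for $\alpha>0$, whence $\alpha\lambda>0$; the degenerate case $\alpha=0$ being handled by the same convention as there) gives $\la(p_\alpha\circ\mm_\lambda) = \la p_{\alpha\lambda} = \Gamma(\alpha\lambda)\,p_{\alpha\lambda}$. Composing with $\mm_{1/\lambda}$ and using $(p_{\alpha\lambda}\circ\mm_{1/\lambda})(w) = e^{\alpha\lambda\cdot w/\lambda} = e^{\alpha w} = p_\alpha(w)$ then yields $\la^\lambda p_\alpha = \Gamma(\alpha\lambda)\,p_\alpha$.

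I do not expect a genuine obstacle: the argument is entirely bookkeeping, and the only points needing care are keeping track of how $\re$ and closed log-subsectors transform under the positive scalings $\mm_\lambda$ and $\mm_{1/\lambda}$, together with the harmless degenerate case $\alpha=0$ in part (2).
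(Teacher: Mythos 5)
Your proof is correct and is exactly what the paper has in mind: the text just before the corollary says the result is "immediately obtained" from \cref{Laplace_prop} and \cref{power_expl} by unwinding the definition $\la^\lambda f = (\la(f\circ\mm_\lambda))\circ\mm_{1/\lambda}$, and your bookkeeping with $\mm_\lambda$ and $\mm_{1/\lambda}$ is precisely that unwinding.
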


For $f:S(d,\frak r,\theta\lambda) \into \CC$ as in \cref{k-lbt}, \cref{k-Borel_cor} implies that $\la^\lambda(\bo^\lambda f)$ is defined and log-holomorphic on every closed log-subsector $\tilde S$ contained in $S(d,\frak r,\theta\lambda)$.  From \cref{llt-lbt_prop}, we therefore obtain:

\begin{cor} \label{k-llt-lbt_prop}
	For $f:S(d,\frak r,\theta\lambda) \into \CC$ as in \cref{k-lbt}, we have $\la^\lambda(\bo^\lambda f) = f$. \qed
\end{cor}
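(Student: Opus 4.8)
The plan is to deduce this directly from \cref{llt-lbt_prop} by conjugating with the logarithmic ramification map $\mm_\lambda$. Recall the definitions $\bo^\lambda f = \bigl(\bo(f\circ\mm_\lambda)\bigr)\circ\mm_{1/\lambda}$ and, for an eligible $h$, $\la^\lambda h = \bigl(\la(h\circ\mm_\lambda)\bigr)\circ\mm_{1/\lambda}$. Since $\mm_{1/\lambda}\circ\mm_\lambda = \id$, composing $\bo^\lambda f$ on the right with $\mm_\lambda$ recovers $\bo(f\circ\mm_\lambda)$, so
\[
	\la^\lambda(\bo^\lambda f) = \bigl(\la\bigl((\bo^\lambda f)\circ\mm_\lambda\bigr)\bigr)\circ\mm_{1/\lambda} = \bigl(\la\bigl(\bo(f\circ\mm_\lambda)\bigr)\bigr)\circ\mm_{1/\lambda}.
\]

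First I would check that $g := f\circ\mm_\lambda$ satisfies the hypotheses of \cref{lbt}: since $f$ is defined on $S(d,\frak r,\theta\lambda)$ with $\theta>\pi$, the function $g$ is defined on $S(d/\lambda,\frak r/\lambda,\theta)$ with $\theta>\pi>\frac\pi2$, and $g\rest{\tilde S}$ is bounded and log-holomorphic for each closed log-subsector $\tilde S$ because $\mm_\lambda$ carries closed log-subsectors of $S(d/\lambda,\frak r/\lambda,\theta)$ to closed log-subsectors of $S$. Thus \cref{llt-lbt_prop} applies to $g$ and yields $\la(\bo g) = g$ on $S(d/\lambda,\frak r/\lambda,\theta)\cap V$, with $V$ the log-sectorial domain supplied by \cref{Laplace_prop}. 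Substituting into the display gives $\la^\lambda(\bo^\lambda f) = g\circ\mm_{1/\lambda} = (f\circ\mm_\lambda)\circ\mm_{1/\lambda} = f$.

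The only point requiring care is the bookkeeping of domains: I would verify that the set on which the right-hand side of the display is defined is precisely the image under $\mm_\lambda$ of the set $S(d/\lambda,\frak r/\lambda,\theta)\cap V$ on which the unramified identity holds for $g$, so that the chain of equalities is valid pointwise on the stated domain $S(d,\frak r,\theta\lambda)$ (intersected with the appropriate $\mm_\lambda$-image of a log-sectorial domain) of $\la^\lambda(\bo^\lambda f)$. This is immediate from the definitions of $\bo^\lambda$ and $\la^\lambda$ together with the fact that $\mm_\lambda$ is a log-biholomorphism of $\bar\CC$ that simply rescales angular width by $\lambda$; no new estimate beyond the already-established unramified case is needed, so there is no substantial obstacle here.
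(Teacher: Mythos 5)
Your argument is correct and is exactly what the paper has in mind: the corollary follows by conjugating the unramified identity $\la(\bo g) = g$ of \cref{llt-lbt_prop} with $\mm_\lambda$, using that $(\bo^\lambda f)\circ\mm_\lambda = \bo(f\circ\mm_\lambda)$ and $\mm_{1/\lambda}\circ\mm_\lambda = \id$. The paper leaves this unwinding of definitions to the reader (hence the immediate $\qed$), and your version simply makes it explicit, including the routine check that $f\circ\mm_\lambda$ satisfies the hypotheses of \cref{lbt}.
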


\subsubsection{Formal Borel and Laplace transforms}
Let now $F(X) = \sum a_\alpha X^\alpha \in \Pc{C}{X^*}$.  In view of the above, we define the \textbf{formal $\lambda$-Borel transform} $$(\bo^\lambda F)(X) := \sum \frac{a_\alpha}{\Gamma(\alpha\lambda)} X^\alpha$$ and the \textbf{formal $\lambda$-Laplace transform} $$(\la^\lambda F)(X):= \sum \Gamma(\alpha\lambda) a_\alpha X^\alpha.$$  We get the following from  \cref{lbt_asymptotic_prop}:

\begin{cor} \label{k-lbt_conv}
	Let $F$ be a convergent generalized power series with natural support.  Then both $\bo^\lambda\bar F$ and $\bar{\bo^\lambda F}$ are log-holomorphic on $\bar\CC$, and we have $\bo^\lambda \bar F = \bar{\bo^\lambda F}$. \qed
\end{cor}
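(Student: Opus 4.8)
The plan is to reduce the claim to \cref{lbt_asymptotic_prop} by absorbing the logarithmic ramification $\mm_\lambda$ into a rescaling of the exponents. Write $F(X) = \sum_{\alpha} a_\alpha X^\alpha$ and introduce the reindexed series $G(X) := \sum_\alpha a_\alpha X^{\lambda\alpha}$, whose support is $\lambda\cdot\supp(F)$. First I would check that $G$ is again a convergent generalized power series with natural support: multiplication by the positive constant $\lambda$ carries a well-ordered (resp.\ natural) subset of $[0,\infty)$ to another such set, and $\|G\|_s = \sum_\alpha |a_\alpha| s^{\lambda\alpha} = \|F\|_{s^\lambda}$ is finite for $s$ small, so $G \in \Pc{C}{X^*}$. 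In particular, \cref{lbt_asymptotic_prop} applies to $G$.

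The key identity is $\bar F \circ \mm_\lambda = \bar G$ on $\bar\CC$: for $w$ in the relevant log-disk one has $\bar F(\lambda w) = \sum_\alpha a_\alpha e^{\lambda\alpha w} = \bar G(w)$, and both sides extend log-holomorphically to $-\infty$ with common value $F(0) = G(0)$. Feeding this into \cref{lbt_asymptotic_prop} for $G$, I obtain that $\bo(\bar F\circ\mm_\lambda) = \bo\bar G$ is log-holomorphic on $\bar\CC$, that $\bar{\bo G}$ is log-holomorphic on $\bar\CC$, and that $\bo\bar G = \bar{\bo G}$, where $\bo G(X) = \sum_\alpha \tfrac{a_\alpha}{\Gamma(\lambda\alpha)} X^{\lambda\alpha}$ and hence $\bar{\bo G}(w) = \sum_\alpha \tfrac{a_\alpha}{\Gamma(\lambda\alpha)} e^{\lambda\alpha w}$.

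Now unwind the definition (\cref{k-lbt}): $\bo^\lambda\bar F = \bigl(\bo(\bar F\circ\mm_\lambda)\bigr)\circ\mm_{1/\lambda} = (\bo\bar G)\circ\mm_{1/\lambda}$. Since $\mm_{1/\lambda}$ is a log-holomorphic self-map of $\bar\CC$ (linear, fixing $-\infty$) and $\bo\bar G$ is log-holomorphic on $\bar\CC$, the composite $\bo^\lambda\bar F$ is log-holomorphic on $\bar\CC$. Finally, using $\bo\bar G = \bar{\bo G}$ and evaluating, $(\bar{\bo G}\circ\mm_{1/\lambda})(w) = \sum_\alpha \tfrac{a_\alpha}{\Gamma(\lambda\alpha)} e^{\lambda\alpha\cdot(w/\lambda)} = \sum_\alpha \tfrac{a_\alpha}{\Gamma(\lambda\alpha)} e^{\alpha w}$, which is exactly the log-sum $\bar{\bo^\lambda F}(w)$ of the formal $\lambda$-Borel transform. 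This yields $\bo^\lambda\bar F = \bar{\bo^\lambda F}$ and, along the way, shows $\bar{\bo^\lambda F}$ is log-holomorphic on $\bar\CC$.

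I do not expect a genuine obstacle: the statement is engineered so that the ramification disappears after rescaling of exponents. The only point requiring a little care is the bookkeeping at the origin $-\infty$ — confirming that $\mm_\lambda$ and $\mm_{1/\lambda}$ are honest log-holomorphic self-maps of $\bar\CC$ fixing $-\infty$, so that composites remain log-holomorphic there, and that $G$ still has natural support, which is precisely what makes \cref{lbt_asymptotic_prop} applicable and guarantees the infinite radius of convergence of $\bo G$. An equally short alternative would be to copy the contour-integral computation in the proof of \cref{lbt_asymptotic_prop} verbatim, replacing $p_{\alpha_n}$ by $p_{\lambda\alpha_n}$ and $\bo$ by $\bo^\lambda$ and invoking \cref{k-Borel_cor}(3); the reduction above avoids redoing that analysis.
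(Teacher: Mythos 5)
Your proof is correct and matches the approach the paper intends: Corollary \ref{k-lbt_conv} is stated with a terminal \qed precisely because it is meant to follow from \cref{lbt_asymptotic_prop} by unwinding the definition $\bo^\lambda f = \bigl(\bo(f\circ\mm_\lambda)\bigr)\circ\mm_{1/\lambda}$, which is exactly the reindexing $G(X)=\sum a_\alpha X^{\lambda\alpha}$ you carry out. The bookkeeping you flag (naturality and convergence of $G$, $\mm_\lambda$ and $\mm_{1/\lambda}$ being log-holomorphic self-maps of $\bar\CC$ fixing $-\infty$) is exactly what needs checking and you check it.
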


\subsubsection{Ramified Borel transforms of generalized multisummable functions}
Let $K \subseteq (0,\infty)$ be finite and nonempty and $\lambda \le \mu_K$, and define $$K(\lambda):= \set{k\lambda:\ k \in K} \quad\text{and}\quad \tau(\lambda):= \left(K(\lambda),R^{1/\lambda}, r, \theta \right);$$  note that $f \in \G_\tau$ if and only if $f \circ \mm_{\lambda} \in \G_{\tau(\lambda)}$.

Let $f \in \G_\tau$; by \cref{k-Borel_cor}, taking $\theta$ there equal to $\theta \cdot \frac{\mu_K}\lambda$ here, the function $\bo^\lambda f$ is defined on the log-sector $S\left(0, \infty, {\theta \mu_K} - \frac{\pi\lambda}{2}\right)$.  Thus, we obtain the following from  \cref{lbt_on_pre-summable} and \cref{multisum_cor}:

\begin{cor} \label{k-lbt_on_pre-summable}
	Let $f \in \G_\tau$, and assume that $\mu_K \ge \lambda$ (in particular, $\bo^\lambda f$ is well defined).  Let $R' \in (0,R)$ and $r' \in (1,r)$ be such that $(R')^{1/\lambda} \le \frac {R^{1/\lambda}} e \log(r/r')$, and set $$K':= \set{k-\lambda:\ k \in K,\ k > \lambda} $$ and $\tau':= \left(K',R',r',\theta,\Delta\right)$.  Then $\bo^\lambda f$ belongs to $\G_{\tau'}$ and satisfies $T(\bo^\lambda f) = \bo^\lambda(Tf)$. \qed
\end{cor}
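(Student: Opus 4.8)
The plan is to obtain this corollary by conjugating \cref{lbt_on_pre-summable} (and, in one boundary case, \cref{multisum_cor}) through the logarithmic ramification map $\mm_\lambda$. Two facts do the work. The first is the equivalence recorded just before the statement: $f \in \G_\tau$ if and only if $g := f\circ\mm_\lambda \in \G_{\tau(\lambda)}$, where $\tau(\lambda)$ is obtained from $\tau$ by replacing $K$ by $K(\lambda)$ (so that $\mu_{K(\lambda)} = \mu_K/\lambda$), $R$ by $R^{1/\lambda}$, and the support set $\Delta$ by $\lambda\Delta$ --- which is again natural and closed under addition, since $\lambda>0$ --- the parameters $r$ and $\theta$ being unchanged. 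The second is the defining identity $\bo^\lambda f = \bo(f\circ\mm_\lambda)\circ\mm_{1/\lambda} = \bo g\circ\mm_{1/\lambda}$. I would also note, from \cref{gevrey_estimates}, that this equivalence respects the formal transform $T$: if $Tf = \sum a_\alpha X^\alpha$, then $Tg = \sum a_\alpha X^{\alpha\lambda}$, because $g(w) = f(\lambda w)$ and $e^{\alpha\lambda w} = (e^w)^{\alpha\lambda}$.

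First I would verify the hypotheses needed to push $g$ through \cref{lbt_on_pre-summable}: one has $\mu_{K(\lambda)} = \mu_K/\lambda \ge 1$ since $\mu_K \ge \lambda$. Set $R'' := (R')^{1/\lambda}$ and $r'' := r'$. Then $R'' \in (0, R^{1/\lambda})$ because $R' \in (0,R)$, and the corollary's hypothesis $(R')^{1/\lambda} \le \frac{R^{1/\lambda}}{e}\log(r/r')$ is exactly the inequality $R'' \le \frac{R^{1/\lambda}}{e}\log(r/r'')$ required to apply \cref{lbt_on_pre-summable} to $g$ (with $R^{1/\lambda}$ and $r$ in the roles of $R$ and $r$ there). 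That proposition then yields $\bo g \in \G_{\tau''}$ with $T(\bo g) = \bo(Tg)$, where $\tau'' = \bigl(\{k'-1 : k'\in K(\lambda),\ k'>1\},\ R'',\ r'',\ \theta,\ \lambda\Delta\bigr)$. In the boundary case $\mu_K = \lambda$, so that $K(\lambda)$ has minimum $1$ and \cref{lbt_on_pre-summable} would produce a $0$ in the index set, one uses \cref{multisum_cor} instead, together with \cref{ms_domain_rmk 2} (or \cref{convergent_expl} when $K = \{\lambda\}$) to delete that $0$.

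Next I would conjugate back. Applying the equivalence of the first paragraph to $\bo g$ and the map $\mm_{1/\lambda}$ gives $\bo^\lambda f = \bo g\circ\mm_{1/\lambda} \in \G_{\tau'}$, and I would check that the parameters come out as stated: the index set becomes $\lambda\cdot\bigl(\{k'-1 : k'\in K(\lambda),\ k'>1\}\bigr) = \{k-\lambda : k\in K,\ k>\lambda\} = K'$; the log-radius parameter becomes $(R'')^\lambda = R'$; the support set becomes $\frac{1}{\lambda}(\lambda\Delta) = \Delta$; and $r$, $\theta$ are unchanged. Finally, for the formal transform, $\bo(Tg) = \sum \frac{a_\alpha}{\Gamma(\alpha\lambda)}X^{\alpha\lambda}$ by definition of the formal Borel transform, and unramifying gives $T(\bo^\lambda f) = \sum \frac{a_\alpha}{\Gamma(\alpha\lambda)}X^\alpha = \bo^\lambda(Tf)$ by definition of the formal $\lambda$-Borel transform, which is the remaining assertion.

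Since every step is a literal transcription through $\mm_\lambda$ of something already established, I do not expect a genuine obstacle. The points that demand care are purely bookkeeping: tracking how $K$, $R$ and $\Delta$ transform under $\mm_\lambda$ and $\mm_{1/\lambda}$; confirming that $\lambda\Delta$ remains an admissible (natural, addition-closed) support set; checking that the corollary's radius inequality converts exactly into the hypothesis of \cref{lbt_on_pre-summable} for $g$; and isolating the boundary case $\mu_K = \lambda$, where \cref{multisum_cor} rather than the full proposition is what applies.
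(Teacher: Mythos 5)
Your argument is the one the paper intends: the corollary is stated with a \qed{} precisely because it is meant to follow by conjugating \cref{lbt_on_pre-summable} (or \cref{multisum_cor} at the boundary) through $\mm_\lambda$, using the equivalence $f\in\G_\tau \Leftrightarrow f\circ\mm_\lambda\in\G_{\tau(\lambda)}$ set up just before the statement, and your bookkeeping of $K$, $R$, $\Delta$, the radius inequality, and the formal transform is all correct. One remark worth recording: the paper's displayed definition $K(\lambda):=\{k\lambda : k\in K\}$ appears to be a sign error in the exponent --- a direct check of $\frac{1}{\lambda}S^\tau = S^{\tau(\lambda)}$ and $\frac1\lambda S_p^\tau=S_p^{\tau(\lambda)}$ forces $K(\lambda)=\{k/\lambda : k\in K\}$, and this is what you in fact use when you write $\mu_{K(\lambda)}=\mu_K/\lambda$ and when you unramify $\{k'-1 : k'\in K(\lambda),\,k'>1\}$ back to $\{k-\lambda : k\in K,\,k>\lambda\}$; so you have silently corrected a typo rather than introduced a gap.
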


\subsection{Summation and quasianalyticity} \label{quasi}
Let $K \subseteq [0,\infty)$ be nonempty and finite, and let $R>0$, $r > 1$, $\theta > \pi/2$, and $\Delta \subseteq [0,\infty)$ be natural and closed under addition, and set $\tau= (K,R,r,\theta,\Delta)$.  The goal of this section is to establish the quasianalyticity of the algebra $\G_\tau$ (\cref{quasianalyticity} below).  The key ingredient is the following summation method:

\begin{prop}[Summation] \label{watson_cor}
	Assume that $K = \{k_1, \dots, k_l\}$ with $0 < k_1 < \cdots < k_l < \infty$ and $l \ge 1$.  Let $f \in \G_\tau$, and set $\kappa_1:= k_1$ and $\kappa_i:= k_i - k_{i-1}$ for $i=2, \dots, l$.  Then the series $\left(\bo^{\kappa_1} \circ \cdots \circ \bo^{\kappa_{l}}\right)(Tf)$ converges, and we have $$f = \left(\la^{\kappa_1} \circ \cdots \circ \la^{\kappa_l}\right) \left(\bar{\left(\bo^{\kappa_l} \circ \cdots \circ  \bo^{\kappa_1}\right) (Tf)}\right).$$  
\end{prop}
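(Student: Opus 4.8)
The plan is to induct on $l=|K|$, removing one ramified log-Borel transform per Gevrey level by means of \cref{k-lbt_on_pre-summable}, which carries out exactly one such step. Two preliminary observations. First, the formal $\lambda$-Borel transforms commute with one another, since each merely divides the coefficient of $X^{\alpha}$ by $\Gamma(\alpha\lambda)$; hence the two compositions appearing in the statement, $(\bo^{\kappa_{1}}\circ\cdots\circ\bo^{\kappa_{l}})(Tf)$ and $(\bo^{\kappa_{l}}\circ\cdots\circ\bo^{\kappa_{1}})(Tf)$, denote the same formal series, and I will reorder factors freely. Second, with $\lambda:=\mu_{K}=k_{1}=\kappa_{1}$: since $S^{\tau}=S(0,\log R,\theta\mu_{K})$, the function $f\circ\mm_{\lambda}$ is log-holomorphic on $S(0,\tfrac{\log R}{\lambda},\theta)$ with $\theta>\pi/2$, so \cref{llt-lbt_prop} gives $\la(\bo(f\circ\mm_{\lambda}))=f\circ\mm_{\lambda}$; composing with $\mm_{1/\lambda}$ yields the inversion $\la^{\kappa_{1}}(\bo^{\kappa_{1}}f)=f$ (the relevant instance of \cref{k-llt-lbt_prop}), and in particular $\bo^{\kappa_{1}}f$ is defined.

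Base case $l=1$: here $K=\{k_{1}\}$, $\kappa_{1}=\mu_{K}$. Applying \cref{k-lbt_on_pre-summable} with $\lambda=\kappa_{1}$ lands in the degenerate case $K'=\emptyset$, i.e.\ the convergent regime; unwinding its derivation (via \cref{multisum_cor}, then \cref{convergent_expl} applied to the index set $\{0\}$) shows that $\bo^{\kappa_{1}}(Tf)$ converges and that $\bo^{\kappa_{1}}f=\bar{\bo^{\kappa_{1}}(Tf)}$. With the inversion above, $f=\la^{\kappa_{1}}(\bo^{\kappa_{1}}f)=\la^{\kappa_{1}}\bigl(\bar{\bo^{\kappa_{1}}(Tf)}\bigr)$, which is the $l=1$ case.

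Inductive step: let $l\ge2$, $K=\{k_{1}<\cdots<k_{l}\}$, $f\in\G_{\tau}$. Since $\mu_{K}=\kappa_{1}$, pick $R'\in(0,R)$ small and $r'\in(1,r)$ with $(R')^{1/\kappa_{1}}\le\tfrac{R^{1/\kappa_{1}}}{e}\log(r/r')$, and set $g:=\bo^{\kappa_{1}}f$. By \cref{k-lbt_on_pre-summable}, $g\in\G_{\tau_{1}}$ with $\tau_{1}=(K_{1},R',r',\theta,\Delta)$, $K_{1}=\{k_{2}-k_{1},\dots,k_{l}-k_{1}\}$, and $Tg=\bo^{\kappa_{1}}(Tf)$. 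Now $|K_{1}|=l-1$, and the successive gaps of $K_{1}$ are precisely $\kappa_{2},\dots,\kappa_{l}$ (its least element is $k_{2}-k_{1}=\kappa_{2}$ and its consecutive differences are $k_{i+1}-k_{i}=\kappa_{i+1}$), so the induction hypothesis applied to $g$ gives that $(\bo^{\kappa_{2}}\circ\cdots\circ\bo^{\kappa_{l}})(Tg)$ converges and
\[
g=(\la^{\kappa_{2}}\circ\cdots\circ\la^{\kappa_{l}})\bigl(\bar{(\bo^{\kappa_{l}}\circ\cdots\circ\bo^{\kappa_{2}})(Tg)}\bigr).
\]
Substituting $Tg=\bo^{\kappa_{1}}(Tf)$ and reordering the commuting formal transforms gives $(\bo^{\kappa_{2}}\circ\cdots\circ\bo^{\kappa_{l}})(Tg)=(\bo^{\kappa_{1}}\circ\cdots\circ\bo^{\kappa_{l}})(Tf)$, which proves the convergence claim; and applying $\la^{\kappa_{1}}$ to the displayed identity and invoking $f=\la^{\kappa_{1}}(g)$ from the first paragraph yields
\[
f=(\la^{\kappa_{1}}\circ\la^{\kappa_{2}}\circ\cdots\circ\la^{\kappa_{l}})\bigl(\bar{(\bo^{\kappa_{l}}\circ\cdots\circ\bo^{\kappa_{1}})(Tf)}\bigr),
\]
as required.

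The genuine analytic input is already packaged in \cref{k-lbt_on_pre-summable} and \cref{llt-lbt_prop}, so the remaining work is bookkeeping, and that is where the main effort lies: one verifies at each level that the constraint of \cref{k-lbt_on_pre-summable} is satisfiable while keeping $r'>1$ (routine, by shrinking $R'$) and that $\theta$ and $\Delta$ carry through unchanged so the induction hypothesis genuinely applies to $\tau_{1}$; one checks that the domain $S^{\tau_{j}}=S(0,\log R_{j},\theta\mu_{K_{j}})$ of each intermediate function contains the log-sector required to invert the outermost $\la^{\kappa_{i}}$, which holds because after the ramification $\mm_{\mu_{K_{j}}}$ this domain becomes a log-sector whose angular parameter exceeds $\pi/2$; and one pins down the degenerate base case, i.e.\ that ``$K'=\emptyset$'' in \cref{k-lbt_on_pre-summable} signifies the convergent regime, by tracing \cref{multisum_cor} and \cref{convergent_expl}. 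None of these points is deep; the only fiddly ones are that last identification and keeping straight which ramification $\mm_{\lambda}$ is in force at each stage.
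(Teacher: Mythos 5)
Your proof is correct and follows essentially the same inductive strategy as the paper: peel off the minimal Gevrey level via \cref{k-lbt_on_pre-summable}, apply the induction hypothesis to $\bo^{\kappa_1}f$, invert with $\la^{\kappa_1}$, and resolve the base case through \cref{convergent_expl}. Your preliminary remarks (commutativity of the formal Borel transforms, the derivation of $\la^{\kappa_1}(\bo^{\kappa_1}f)=f$ from \cref{llt-lbt_prop} via ramification) simply make explicit what the paper uses without comment, and your identification of the degenerate $K'=\emptyset$ case with the convergent regime matches how the paper routes the base case through \cref{multisum_cor} and \cref{convergent_expl}.
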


\begin{proof}
	By induction on $l$.  If $l=1$, then 
	\begin{align*}
		\la^{\kappa_1} \left(\bar{\bo^{\kappa_1}(Tf)}\right) &= \la^{\kappa_1} \left(\bar{T(\bo^{\kappa_1} f)}\right) &\text{by \cref{k-lbt_on_pre-summable}} \\
		&= \la^{\kappa_1} \left(\bo^{\kappa_1} f\right) &\text{by \cref{convergent_expl}} \\
		&= f &\text{by \cref{k-llt-lbt_prop}}.
	\end{align*}
	
	So we assume that $l>1$ and the proposition holds for lower values of $l$.  Then by \cref{k-lbt_on_pre-summable}, the function $\bo^{\kappa_1} f$ belongs to $\G_{\tau'}$ and satisfies $T\left(\bo^{\kappa_1} f\right) = \bo^{\kappa_1}(Tf)$, where $\tau' = (K',R',r',\theta,\Delta)$ for $K' := \left(k_2-k_1, \dots, k_l-k_1\right)$ and some appropriate $R'>0$ and $r'>1$.  From the inductive hypothesis applied to $\bo^{k_1} f$, we get that $\left(\bo^{\kappa_l} \circ \cdots \circ \bo^{\kappa_{2}}\right) \left(T\left(\bo^{\kappa_1}f\right)\right)$ converges, so that
	\begin{align*}
	f &= \la^{\kappa_1}(\bo^{\kappa_1} f) &\text{(\cref{llt-lbt_prop})}\\
	&= \la^{\kappa_1}\left[\left(\la^{\kappa_{2}} \circ \cdots \circ \la^{\kappa_l}\right) \left(\bar{\left(\bo^{\kappa_l} \circ \cdots \circ \bo^{\kappa_{2}}\right) (T(\bo^{\kappa_1}f))}\right)\right] &\text{(ind. case applied to $\bo^{\kappa_1} f$)} \\
	&= \la^{\kappa_1}\left[\left(\la^{\kappa_{2}} \circ \cdots \circ \la^{\kappa_l}\right)\left(\bar{\left(\bo^{\kappa_l} \circ \cdots \circ \bo^{\kappa_{2}} \circ \bo^{\kappa_1}\right)(Tf)}\right)\right] \\
	&= \left(\la^{\kappa_1} \circ \la^{\kappa_{2}} \circ \cdots \circ \la^{\kappa_l}\right) \left(\bar{\left(\bo^{\kappa_l} \circ \cdots \circ \bo^{\kappa_{2}}\circ  \bo^{\kappa_1}\right) (Tf)}\right),
	\end{align*}
	as claimed.
\end{proof}

\begin{thm}[Quasianalyticity] \label{quasianalyticity}
	The map $T:\G_\tau \into \Ps{C}{X^*}$ is injective.
\end{thm}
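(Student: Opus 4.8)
The plan is to derive injectivity of $T$ directly from the inversion formula of \cref{watson_cor}. Since $T\colon \G_\tau \to \Ps{C}{X^*}$ is a $\CC$-algebra homomorphism, in particular $\CC$-linear, it suffices to prove $\ker T = \{0\}$. So I would fix $f \in \G_\tau$ with $Tf = 0$ and show $f = 0$.

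First I would strip a possible $0$ from $K$. If $K = \{0\}$, then \cref{convergent_expl} gives $f = \bar{Tf}$, so $Tf = 0$ forces $f = 0$ at once. If $0 \in K$ and $|K| > 1$, then \cref{ms_domain_rmk 2} identifies $\G_\tau$ with $\G_{\tau'}$, where $\tau' = (K \setminus \{0\}, R, r, \theta, \Delta)$; this identification is compatible with $T$ (which is defined via the asymptotic expansion, independently of $\tau$), so we may assume $0 \notin K$, say $K = \{k_1, \dots, k_l\}$ with $0 < k_1 < \cdots < k_l$ and $l \ge 1$.

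Now \cref{watson_cor} applies: writing $\kappa_1 := k_1$ and $\kappa_i := k_i - k_{i-1}$, it yields
$$f = \left(\la^{\kappa_1} \circ \cdots \circ \la^{\kappa_l}\right)\left(\bar{\left(\bo^{\kappa_l} \circ \cdots \circ \bo^{\kappa_1}\right)(Tf)}\right).$$
The key observation is that every operation on the right-hand side annihilates a zero input: the formal $\lambda$-Borel transform $(\bo^\lambda F)(X) = \sum_\alpha \frac{a_\alpha}{\Gamma(\alpha\lambda)} X^\alpha$ sends the zero series to the zero series; the log-sum of the zero series is the zero function; and each log-$\lambda$-Laplace transform, being the integral of \cref{llt} conjugated by the ramification maps $\mm_\lambda$ and $\mm_{1/\lambda}$, sends the zero function to the zero function. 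Substituting $Tf = 0$ into the displayed formula therefore gives $f = 0$, which completes the argument.

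I do not expect any genuine obstacle at this stage — all the substance has already been invested in \cref{watson_cor} and the results it rests on (the Gevrey estimates of \cref{gevrey_estimates} that make $T$ well defined and an algebra homomorphism, the inversion identity $\la^\lambda(\bo^\lambda f) = f$ of \cref{k-llt-lbt_prop}, and the intertwining $T(\bo^\lambda f) = \bo^\lambda(Tf)$ of \cref{k-lbt_on_pre-summable}). What is left for this theorem is only the elementary remark that the formal Borel transform, the log-sum, and the log-Laplace transform are honest maps fixing $0$, together with the minor bookkeeping needed to reduce to the case $0 \notin K$.
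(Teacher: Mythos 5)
Your proposal is correct and follows the same route as the paper: reduce to $K \subseteq (0,\infty)$ via \cref{convergent_expl} and \cref{ms_domain_rmk 2}, then apply the inversion formula of \cref{watson_cor} to a germ with vanishing asymptotic expansion. You merely spell out the final step (that $\bo^\lambda$, the log-sum, and $\la^\lambda$ all fix zero), which the paper leaves implicit.
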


\begin{proof}
	By \cref{convergent_expl} and \cref{ms_domain_rmk 2}, we may assume that $K \subseteq (0,\infty)$; so the theorem follows from \cref{watson_cor}.
\end{proof}

\subsection{Open questions} \label{question_1}
\begin{enumerate}
	\item Let $f$ be generalized $K$-summable in the real direction, where $K = (k_1, \dots, k_l)$.  Do there exist generalized $(k_i)$-summable functions $g_i$, for $i=1, \dots, l$, such that $f = g_1 + \cdots + g_l$?
	
	This question is motivated by the following: let $f$ be a $K$-summable function in the positive real direction (in the classical sense, at the origin; to avoid branching, let's assume $k_1 > \frac12$).  Then by  \cref{gevrey_2_to_gevrey_1}, the function $f \circ \exp$ belongs to $\G_\tau$ for some $\tau = (K,R,r,\theta,\NN)$; indeed, Tougeron's characterization implies that $f \circ \exp =_{\tau} \sum_p f_p \circ \exp$ for functions $f_p$ that are holomorphic at the origin.  This property of being holomorphic at the origin can be used, via Cauchy integration, to show that there exist $(k_i)$-summable functions $g_i$, for $i=1, \dots, l$, such that $f = g_1 + \cdots + g_l$.  However, for general $g \in \G_\tau$ with $g =_{\tau} \sum_p g_p$, the functions $g_p \circ \log$ have essential singularities at the origin, so the Cauchy integration argument used for $f$ does not work for $g \circ \log$ to write $g$ as a sum of a generalized $(k_i)$-summable functions.
	
	\item From the point of view of multisummability, there is nothing special about the real direction chosen here.  (We are only interested in the real direction here, because we are aiming to construct algebras of real functions.)  Indeed, one can similarly define generalized $K$-summable functions in any direction $d$.  However, it is not clear to us what the right generalization of ``generalized multisummable'' (without specified direction) should be: in the classical case, all multisummable functions are $2\pi i$-periodic in the logarithmic chart, and they are defined to be multisummable if they are multisummable in all but finitely many directions in $\RR/2\pi \ZZ$.  In contrast, the logarithmic sums of generalized convergent power series are not $ai$-periodic for any $a>0$ in general (take, for instance, the function $e^{\alpha w} + e^{\beta w}$ with $\alpha$ and $\beta$ linearly independent over $\QQ$), so generalized multisummable functions in the real direction aren't either.  Possibly, the right way to define ``generalized multisummable'' would be to look for something like Stokes phenomena in differential equations over (quotients of) convergent generalized power series.  
	
	\item Is there a Ramis-Sibuya theorem (see \cite{MR1303885}) for ordinary differential equations involving quotients of convergent generalized power series?  As hinted at in Question 2, such a theorem might inform the correct definition of the term ``generalized multisummable''.
\end{enumerate}

\section{Generalized multisummable functions in several variables} \label{several-variable_chapter}

The extension of the notion ``generalized multisummable in the real direction'' to several variables roughly follows the treatment in \cite[Section 2]{Dries:2000mx} of the notion ``multisummable in the positive real direction'', keeping in mind that we work in the logarithmic chart.  
Since we will not need to work with the ramified Borel and Laplace operators any more, we will revert here to the classical notation for Gevrey orders used in \cite{Dries:2000mx}.

It will be useful for the definitions below to set $\im(-\infty) = \arg 0 = 0$. 

\begin{prclaim}[Notation] \rm
	For $k = (k_1, \mdots,  k_m) \in [0, \infty)^m$ and $w = (w_1, \mdots,
	w_m) \in \bar\CC^m$ we put
	\begin{gather*}
	\Sigma k := k_1 + \cdots + k_m, \\
	\re w:= (\re w_1, \dots, \re w_m) \text{ and } \im w:= (\im w_1, \dots, \im w_m), \\
	kw:= (k_1w_1, \dots, k_mw_m) \text{ and } k \cdot w := k_1w_1 + \cdots + k_mw_m, \\
	|w| := \sup\set{|w_i|:\ i=1,\mdots, m} \text{ and } \|w\|:= (|w_1|, \dots, |w_m|), \\
	e^w := (e^{w_1}, \dots, e^{w_m}).
	\end{gather*}
	Moreover, if $z = (z_1, \dots, z_m) \in \CC^m$ is such that $\arg z_i \in (-\pi, \pi)$ for each $i$, we also set $$\log z:= (\log z_1, \dots, \log z_m),$$ where $\log$ denotes the standard branch of the logarithm.  Finally, if $\alpha \in [0,\infty)^m$ and $r \in (0,\infty)^m$, we put $$r^\alpha := r_1^{\alpha_1} \cdots r_m^{\alpha_m} \quad\text{and}\quad \Gamma(\alpha):= \Gamma(\alpha_1) \cdots \Gamma(\alpha_m).$$

	If $X \subseteq \bar\CC^m$ and $1\le \nu < m$, and if $a \in \CC^{\nu}$ and $b \in \CC^{m-\nu}$, then we let $$X_a:= \set{w \in \bar\CC^{m-\nu}:\ (a,z) \in X} \quad\text{and}\quad X^b:= \set{w \in \bar\CC^{\nu}:\ (z,b) \in X}$$ be the \textbf{fibers} of $X$ over $a$ and $b$, respectively.
	
%	For a nonempty set $S \subseteq \CC^m$ and $z \in \CC^m$, we let
%	$$d(z, S):= \inf\set{|z-u|:\ u \in S}.$$
	
	If $\frak r, \tilde{\frak r} \in \RR^m$,
	we write $\frak r \leq \tilde{\frak r}$ if $\frak r_i \leq \tilde{\frak r_i}$ for each
	$i$ (and similarly with ``$<$'' in place of ``$\leq$'').
%	For any $\frak r \in \RR^m$, we put
%	\begin{align*}
%	(-\infty, \frak r) &:= (-\infty, \frak r_1) \times \cdots \times (-\infty, \frak r_m), \\
%	(-\infty,\frak r] &:= (-\infty,\frak r_1] \times \cdots \times (-\infty,\frak r_m].
%	\end{align*}
\end{prclaim}

\subsection{Convergent generalized power series} \label{multi_cgps}
Let $X = (X_1, \dots, X_m)$.  Similar to the one-variable case, we denote by $\Ps{C}{X^*}$ the set of all \textbf{generalized power series} of the form $F(X) = \sum_{\alpha \in [0,\infty)^m} a_\alpha X^\alpha$, where each $a_\alpha \in \CC$ and the \textbf{support} $$\supp(F):= \set{\alpha \in [0,\infty)^m:\  a_\alpha \ne 0}$$ is contained in a cartesian product of  well-ordered subsets of $[0,\infty)$ (see \cite[Section 4]{Dries:1998xr} for details).  The series $F(X)$ \textbf{converges} if there exists a \textbf{polyradius} $r \in (0,\infty)^m$ such that $\|F\|_r:= \sum_\alpha |a_\alpha|r^\alpha < \infty$; we denote by $\Pc{C}{X^*}$ the set of all convergent generalized power series \cite[Section 5]{Dries:1998xr}.

For $\frak r \in \RR^m$, we let $$H(\frak r) := \set{w \in \bar\CC^m:\ \re w< \frak r} = H(\frak r_1) \times \cdots \times H(\frak r_m)$$ be the \textbf{log-disk} of \textbf{log-polyradius} $\frak r$.  For an set $D \subseteq \bar\CC^m$, we set $$D^\infty:= D \setminus \CC^m.$$  We call set $D \subseteq \bar\CC^m$ a \textbf{log-domain} if $D \cap \CC^m$ is a domain.  If $D \subseteq \bar\CC^m$ is a log-domain, a \textbf{log-holomorphic} function on $D$ is a continuous function $f:D \into \CC$ such that the restriction of $f$ to $D \cap \CC^m$ is holomoprhic.

Let $F(X) = \sum_{\alpha \in [0,\infty)^m} a_\alpha X^\alpha \in \Ps{C}{X^*}$ such that $\|F\|_r < \infty$.  Similar to  \cref{lbt_convergent}, there is a log-holomorphic function $\bar F: H(\log r) \into \CC$, called the \textbf{log-sum of $F$}, such that $\bar F(w) := F(e^w)$ whenever $|\im w| < \pi/2$.

\subsection{Logarithmic polydomains} \label{poly_section}
 We define here the logarithmic versions of the domains discussed in \cite[Section 2]{Dries:2000mx}.
Let $\frak r \in \RR^m$, $\theta >\pi/2 $ and $k \in
[0,\infty)^m$, and put
\begin{align*}
S^k(\frak r, \theta) &:= \set{w \in H(\frak r):\ k \cdot \|\im w\| <
	\theta} &\text{ (\bf log-$k$-polysector\rm)}.
\end{align*}
Note that if $m=1$, then $S^k(\frak r, \theta) = S(0,\frak r, \theta/k)$, where the latter is the sector defined in  \cref{prelim1}.
The reason for allowing $k_i=0$ is that we need our class of
log-polysectors to be closed under taking cartesian products with log-disks;
for instance, if $m>1$ and $k = (k', 0)$ with $k' \in [0,\infty)^{m-1}$, then
$S^k(\frak r, \theta) = S^{k'}(\frak r', \theta) \times H(\frak r_m)$.  Finally, our polystrips are ``in the real multidirection''; they can easily be defined in any multidirection\footnote{We would change our convention $\im(-\infty) = 0$, for a given multidirection $d \in \RR^m$, to $\im(-\infty_i) = d_i$ for each $i$, where $\infty_i$ denotes the logarithmic origin in the $i$-th coordinate.}, but we shall not do this here as we do not need to for our purposes.

Next, for $p \in \NN$ we put, by adapting \cite[Section 2]{Dries:2000mx} to the logarithmic chart, 
\begin{align*}
H_p^k(\frak r) &:= \set{w \in H(\frak r):\ k \cdot \re w < k \cdot \frak r - \log(1+p))}  &\text{(\textbf {log-$k$-polydisk}),} \\
S_p^{k}(\frak r, \theta) &:= S^k(\frak r, \theta) \cup H_p^k(\frak r).
\end{align*}
Note that $$\exp\left(H^k_p(\frak r)\right) = D^k_p\left(e^{\frak r}\right) = \set{z \in D\left(e^{\frak r}\right):\ \|z\|^k < \frac{e^{k\cdot \frak r}}{1+p}},$$  corresponding to \cite[Definition 2.1]{Dries:2000mx}.  Thus,
for a non-empty finite $K \subseteq [0,\infty)^m$, we set $$S^K(\frak r,\theta):= \bigcap_{k \in K} S^k(\frak r,\theta),$$ and for $p \in \NN$, $$S^K_p(\frak r,\theta):= \bigcap_{k \in K} S^k_p(\frak r,\theta).$$

Note that if $z \in S_p^K(\frak r, \theta)$ and $t \le 0$, then $t+z \in
S_p^K(\frak r, \theta)$; in particular, $S_p^K(\frak r, \theta)$ is connected.
%Note also that $S^K(\frak r, \theta) \cap \RR^m = (-\infty, \frak r)$.

\subsection{Generalized multisummable functions}  \label{multi_gms}

We now fix $R \in (0,\infty)^m$, $r>1$, $\theta > \pi/2$, a non-empty finite $K \subseteq [0,\infty)^m$ and a natural $\Delta \subseteq [0,\infty)^m$ that is closed under addition, and we set $\tau:= (K,R,r,\theta,\Delta)$.  In this situation, we write $$S^\tau:= S^K(\log R,\theta) \quad\text{and}\quad S^\tau_p:= S^K_p(\log R,\theta).$$    
\\

We need to introduce the following norms for generalized power series:  
%For $\frak r \in \RR$ and $k = (k_1, \dots, k_m) \in [0,\infty)^m$, we define the \textbf{log-$k$-polydisk} of log-polyradius $\frak r$ to be the set $$H^k(\frak r):= \set{w \in H(\frak r,\dots, \frak r):\ k \cdot \re w < \frak r}.$$  Note that if $m=1$, then $H^k(\frak r) = H(\frak r/k)$; in general we have that $$\exp\left(H^k(\frak r)\right) = D^k\left(e^\frak r\right) = \set{z \in D\left(e^\frak r\right):\ |z|^k < e^\frak r}$$ is the corresponding $k$-polydisk at 0.  Also note that $\left(H^k(\frak r)\right)^\infty = \left(\bar\CC\right)^\infty$.
let $U \subseteq \CC^m$ be an open neighbourhood of the origin such that $|z| \in U$ for every $z \in U$.
For a generalized power series $F \in \Pc{C}{X^*}$, we set $$\|F\|_{U} := \sup\set{\|F\|_s:\ s \in \cl(U) \cap (0,\infty)^n}.$$  It follows from the previous section that, if $\|F\|_{U} < \infty$, then $F$ is convergent and the log-sum of $F$ extends to a log-holomorphic function $\bar F: U \into \CC$ such that $\left\|\bar F\right\|_U \le \|F\|_U$, where $\left\|\bar F\right\|_U$ denotes the sup norm of $\bar F$ on $U$. 
\\

Similar to  \cref{pre_gen_multi_section}, we now define generalized multisummable functions in several variables.  
The role of the usual norm $\|\cdot\|_\rho$ on generalized power series there is taken on here by the norm $\|\cdot\|_U$ as defined above, where $U = D^k_p\left(R\right)$; note indeed  that $z \in D^k_p\left(R\right)$ implies $|z| \in D^k_p\left(R\right)$, as required.  Below, we denote this particular norm $\|\cdot\|_U$ by $\|\cdot\|_{R,k,p}$.

Thus, let $f_p:S^\tau_p \into \CC$ be log-holomorphic and bounded, and assume that there is a natural set $\Delta \subseteq [0,\infty)^m$ and, for each $p$, a convergent generalized power series $T(f_p)(X) \in \Pc{C}{X^*}$ with support contained in $\Delta$ such that $\|T(f_p)\|_{R,k,p} < \infty$ and $$f_p(w) = \bar{ T(f_p)}(w) \quad\text{for } w \in H^k_p\left(\log R\right).$$  Assuming that $$\sum_p \|T(f_p)\|_{R,k,p}\ r^p < \infty \quad\text{and}\quad \sum_p \|f_p\|_{S^\tau_p}\ r^p < \infty,$$ we have that $\sum_p f_p$ converges uniformly on $S^\tau$ to a log-holomorphic function $f:S^\tau \into \CC$.  As before, we abbreviate this situation by writing $$f =_\tau \sum_p f_p,$$  and we set $$\|f\|_\tau:= \inf\set{\max\set{\sum_p \|F_p\|_{R,k,p}\ r^p,\ \sum_p \|f_p\|_{S^\tau_p}\ r^p}:\ f =_\tau \sum_p f_p}.$$
Thus, as in \cref{pre_gen_multi_section}, we obtain a Banach algebra $$\G_\tau:= \set{f: S^\tau \into \CC:\ \text{there exist } f_p \text{ such that } f =_\tau \sum_p f_p}$$
over $\CC$ of functions in $m$ variables with norm $\|\cdot\|_\tau$. 
 Note that if $m=1$, $\G_\tau$ as defined here is the same as $\G_{\tau'}$ defined in  \cref{pre_gen_multi_section}, where $\tau'=(K',R,r,\theta,\Delta)$ with $K'$  obtained from $K$ by replacing all nonzero $k\in K$ by $1/k$ (see the introductory remarks at the beginning of \cref{several-variable_chapter}).

Arguing as in the one-variable case, if $T(f_p)(X) = \sum a_{\alpha,p} X^\alpha$ for each $p$, we obtain a generalized power series $Tf(X) = \sum a_\alpha X^\alpha$, where $a_\alpha = \sum_p a_{\alpha,p}$ for each $\alpha$.  
\subsection{Quasianalyticity}
In view of proving our o-minimality result in Section \ref{o-min_chapter}, we show in this section that the map $T:\G_\tau \into \Ps{C}{X^*}$ is injective.
First, we explain the reason why we define the log-$k$-polysectors and the log-$k$-polydisks via scalar products rather than just taking Cartesian products of the corresponding objects in one variable. This is done because scalar product behaves well with respect to fibers, as shown in Remark \ref{multi-dom_rmk}.

We set $\mu_{K,i} := \min\set{k_i:\ k \in K}$ for $i=1, \dots, m$, $\mu_K:= (\mu_{K,1}, \dots, \mu_{K,m})$ and $$\rho^\tau_p := \left(\frac{R_1}{(1+p)^{1/\mu_{K,1}}}, \dots, \frac{R_m}{(1+p)^{1/\mu_{K,m}}}\right).$$ 

The next lemma shows that, although the log-$k$-polydisks are not themselves log-disks, the set $S^\tau_p$ contains a suitable log-disk.

\begin{lemma}  \label{containment}
	We have $H\left(\log\rho^\tau_p\right) \subseteq H^{\mu_K}_p(\log R) \subseteq \bigcap_{k \in K} H^k_p(\log R) \subseteq  S^\tau_p$.
\end{lemma}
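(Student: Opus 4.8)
The statement is a chain of three inclusions, and the plan is to verify them one at a time; only the middle one carries any real content. The rightmost inclusion $\bigcap_{k\in K} H^k_p(\log R) \subseteq S^\tau_p$ is purely formal: by definition $S^k_p(\log R,\theta) = S^k(\log R,\theta)\cup H^k_p(\log R) \supseteq H^k_p(\log R)$ for each $k\in K$, so intersecting over $k\in K$ gives $S^\tau_p = S^K_p(\log R,\theta) = \bigcap_{k\in K}S^k_p(\log R,\theta) \supseteq \bigcap_{k\in K}H^k_p(\log R)$.

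For the middle inclusion $H^{\mu_K}_p(\log R) \subseteq \bigcap_{k\in K} H^k_p(\log R)$, I would take $w\in H^{\mu_K}_p(\log R)$, so that $\re w < \log R$ and $\mu_K\cdot\re w < \mu_K\cdot\log R - \log(1+p)$, and fix $k\in K$. The key observation is that $\mu_{K,i} = \min\{k'_i : k'\in K\} \le k_i$ for every $i$, while the entries of $\log R - \re w$ are all strictly positive since $w\in H(\log R)$; hence $k_i(\log R_i - \re w_i) \ge \mu_{K,i}(\log R_i - \re w_i)$ for each $i$, and summing over $i$ yields $k\cdot(\log R - \re w) \ge \mu_K\cdot(\log R - \re w) > \log(1+p)$. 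This rearranges to $k\cdot\re w < k\cdot\log R - \log(1+p)$, which together with $w\in H(\log R)$ shows $w\in H^k_p(\log R)$; as $k\in K$ was arbitrary, $w\in\bigcap_{k\in K}H^k_p(\log R)$.

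For the leftmost inclusion $H(\log\rho^\tau_p)\subseteq H^{\mu_K}_p(\log R)$, I would let $w\in H(\log\rho^\tau_p)$ and unwind the definition of $\rho^\tau_p$: in the relevant case where every $\mu_{K,i}>0$, the hypothesis reads $\re w_i < \log R_i - \frac{1}{\mu_{K,i}}\log(1+p)$ for each $i$; since $\log(1+p)\ge 0$ this gives $\re w < \log R$, that is, $w\in H(\log R)$, and multiplying the $i$-th inequality by $\mu_{K,i}>0$ and summing over the $m\ge 1$ coordinates gives $\mu_K\cdot\re w < \mu_K\cdot\log R - m\log(1+p) \le \mu_K\cdot\log R - \log(1+p)$, so $w\in H^{\mu_K}_p(\log R)$. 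The only loose ends are the bookkeeping for coordinates with $\mu_{K,i}=0$ (where the corresponding coordinate of $\rho^\tau_p$ is read off by the appropriate convention and contributes nothing to the dot products) and the fully degenerate case $K=\{0\}$, treated as in \cref{pre_gen_multi_section}; neither touches the substance. I expect no genuine obstacle here — the one idea worth isolating is the monotonicity used in the middle step: on $H(\log R)$ the vector $\log R - \re w$ has strictly positive entries, so $k\mapsto k\cdot(\log R-\re w)$ is componentwise increasing in $k$, which is precisely why enlarging $\mu_K$ to an arbitrary $k\in K$ preserves the defining inequality of $H^k_p(\log R)$.
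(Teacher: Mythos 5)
Your proposal is correct and follows essentially the same route as the paper. The paper also dismisses the first and third inclusions as straightforward and concentrates on the middle one, which it proves by showing $H^l_p(\rho)\subseteq H^k_p(\rho)$ whenever $l\le k$ componentwise, writing $k\cdot\re w = l\cdot\re w + (k-l)\cdot\re w$ and using $\re w < \rho$ together with $k-l\ge 0$; this is the same monotonicity observation you package as positivity of $\log R - \re w$, just algebraically rearranged. (Incidentally, the paper's printed proof of the lemma contains a couple of evident typos — it writes the auxiliary claim as $H^k_p(\rho)\subseteq H^k_p(\rho)$ and ends with the bound $l\cdot\rho - \log(1+p)$ where $k\cdot\rho - \log(1+p)$ is meant — but the intended argument is exactly yours, and you in fact state it more cleanly. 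Your flagging of the degenerate coordinates with $\mu_{K,i}=0$ is reasonable; the paper does not address them explicitly either.)
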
   

\begin{proof}
	The first and third inclusions are straightforward.  For the second, let $k,l \in (0,\infty)^m$ be such that $l \le k$, $p \in \NN$ and $\rho \in \RR$; it suffices to show that $H^k_p(\rho) \subseteq H^k_p(\rho)$.
	To see this, let $w \in H^k_p(\rho)$.  Then $$k \cdot \re w = l \cdot \re w + (k-l) \cdot \re w < l \cdot \rho + (k-l) \cdot \re w - \log(1+p).$$  Since $\re w < \rho$ as well, it follows that $k \cdot \re w < l \cdot \rho - \log(1+p)$, as required.
\end{proof}

For the rest of this section, we set $x':= (x_1, \dots, x_{m-1})$.

\begin{nrmk} \label{multi-dom_rmk}
	Let $a \in \bar\CC^{m-1}$.  Set $\theta(a):= \theta - \max\{k' \cdot \|\im a\|:\ k \in K\}$ and $$\tau(a):=  (\{k_m:\ k \in K\},R_m,r,\theta(a), \Pi_m(\Delta)),$$  where $\Pi_m:\RR^m \into \RR$ is the projection on the last coordinate.  Note that if\ $|\im a|$ is sufficiently small, then $\theta(a) > \pi/2$.
	
	If $\theta(a) > 0$, then $S^{\tau(a)}$ is contained in the fiber $(S^\tau)_a$ of $S^\tau$ over $a$. Moreover, if $\re a < R'$ then, for each $p \in \NN$, the set $H^{k_m}_p\left(\log R_m\right)$ is contained in the fiber of the set $H^k_p(\log R)$ over $a$.  Therefore, if $\theta(a) > 0$ and $\re a < \log R'$, then $S^{\tau(a)}_p$ is contained in the fiber $\left(S^\tau_p\right)_a$, for each $p$.
	
	%		\item Let $b \in \CC$, and set $\theta(b):= \theta - \max\{k_m  |\im b|:\ k \in K\}$ and $$\mu(b):=  (\{k':\ k \in K\},R',r,\theta(b)).$$ Note that if $|\im b|$ is sufficiently small, then $\theta(b) > \pi/2$.
	%		
	%		If $\theta(b) > 0$, then $S^{\mu(b)}$ is contained in the fiber $(S^\tau)^b$ of $S^\tau$ over $b$. \marginpar{\tiny \hspace{1cm} trivial but 
		%			
		%		\hspace{1cm} annoying} Moreover, if $\re b < R_m$, then $H\left(\log\rho^{\mu(b)}_p\right)$ is contained in the fiber $\left(H^k_p(\log R)\right)^b$, for each $p \in \NN$.  Therefore, if $\theta(b) > 0$ and $\re b < \log R_m$, then $\left(S^{\mu(b)}\right)^p$ is contained in the fiber $\left(S^\tau_p\right)^b$, for each $p$.
	%	\end{enumerate}
\end{nrmk}

\begin{lemma} \label{fixing_lemma}
	Let $f \in \G_\tau$ and $a \in \bar\CC^{m-1}$ be such that $\theta(a) > 0$ and $\re a < \log R'$.  Then the function $f_a: S^{\tau(a)} \into \CC$, defined by $f_a(w):= f(a,w)$, belongs to $\G_{\tau(a)}$ and satisfies $\|f_a\|_{\tau(a)} \le \|f\|_\tau$ and $T(f_a)(X_m) = T(f)(e^a,X_m)$.
%		\item If $\theta(b) > 0$ and $\re b < \log R_m$, then the function $f^b$ belongs to $\G_{\mu(b)}$ and satisfies $T(f^b)(Y) = T(f)(Y,e^b)$.
%	\end{enumerate}
\end{lemma}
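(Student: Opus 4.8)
The plan is to take a decomposition $f =_\tau \sum_p f_p$ witnessing $\|f\|_\tau$ up to an arbitrarily small error and restrict it to the fiber over $a$, checking that the restricted data satisfies the requirements of the definition of $\G_{\tau(a)}$ (as given in \cref{multi_gms}). Concretely, for each $p$ I would set $(f_p)_a(w) := f_p(a,w)$ on the fiber $(S^\tau_p)_a$, and take as the associated generalized power series the partial substitution $F_p^a(X_m) := T(f_p)(e^a, X_m) \in \Pc{C}{X_m^*}$, whose support lies in $\Pi_m(\supp T(f_p)) \subseteq \Pi_m(\Delta)$. By \cref{multi-dom_rmk}, the hypotheses $\theta(a) > 0$ and $\re a < \log R'$ guarantee $S^{\tau(a)} \subseteq (S^\tau)_a$ and $S^{\tau(a)}_p \subseteq (S^\tau_p)_a$ for each $p$, so each $(f_p)_a$ is log-holomorphic and bounded on $S^{\tau(a)}_p$ as required.

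The three things to verify are: (i) the series matching $(f_p)_a(w) = \bar{F_p^a}(w)$ on the appropriate log-disk; (ii) the two summability estimates $\sum_p \|F_p^a\|_{R_m, k_m, p}\, r^p < \infty$ and $\sum_p \|(f_p)_a\|_{S^{\tau(a)}_p}\, r^p < \infty$; and (iii) the identity $T(f_a)(X_m) = T(f)(e^a, X_m)$. For (i): on $H^{k_m}_p(\log R_m)$, which by \cref{multi-dom_rmk} sits inside the fiber of $H^k_p(\log R)$ over $a$, we have $f_p(a,w) = \bar{T(f_p)}(a,w)$; since the log-sum of a convergent generalized power series restricts on fibers to the log-sum of the substituted series (this is just rearrangement of an absolutely convergent sum, valid because $|e^a|$ lies in the polydisk of convergence), this equals $\bar{F_p^a}(w)$. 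For the sup-norm estimate in (ii), the key point is that $\|F_p^a\|_{R_m,k_m,p} \le \|T(f_p)\|_{R,k,p}$: evaluating at $s_m \in \cl(D^{k_m}_p(R))$ corresponds to evaluating the full series at a polyradius $(|e^{a_1}|, \dots, |e^{a_{m-1}}|, s_m)$, and $\re a < \log R'$ together with $\re a < \log R$ places this polyradius in $\cl(D^k_p(R))$ (here I would lean on the same polydisk inclusion argument as in the proof of \cref{containment}). The other sum is immediate from $\|(f_p)_a\|_{S^{\tau(a)}_p} \le \|f_p\|_{S^\tau_p}$ since $S^{\tau(a)}_p \subseteq (S^\tau_p)_a$. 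Taking the infimum over decompositions then yields $\|f_a\|_{\tau(a)} \le \|f\|_\tau$. For (iii): writing $T(f_p)(X) = \sum_\alpha a_{\alpha,p} X^\alpha$, we get $T((f_p)_a)(X_m) = \sum_{\alpha'} \left(\sum_{\alpha_m} \big(\sum_{\gamma:\, \Pi_m(\gamma) = \alpha_m,\, \gamma' = \alpha'} a_{\gamma,p} e^{a \cdot \alpha'}\big)\right) X_m^{\alpha_m}$ — more simply, $T((f_p)_a)$ is the $X_m$-series obtained by substituting $e^a$ into the first $m-1$ variables of $T(f_p)$. Summing over $p$ and using that $Tf = \sum_p T(f_p)$ coefficientwise (and that $f_a = \sum_p (f_p)_a$, which follows from $f =_\tau \sum_p f_p$ by restriction) gives $T(f_a)(X_m) = T(f)(e^a, X_m)$, where I must check the coefficient-level interchange of the sum over $p$ with the substitution; this is justified by the absolute convergence furnished by (ii).

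The main obstacle I anticipate is the bookkeeping in (ii), specifically the inequality $\|F_p^a\|_{R_m,k_m,p} \le \|T(f_p)\|_{R,k,p}$: one must be careful that substituting $e^{a'}$ into the first $m-1$ coordinates and then taking the sup over admissible $s_m$ really does land inside the closure of the polydisk $D^k_p(R)$ defining the norm $\|\cdot\|_{R,k,p}$. This requires combining the constraint $\re a < \log R'$ (controlling the first $m-1$ coordinates relative to the log-polydisk condition $k \cdot \re w < k \cdot \log R - \log(1+p)$) with the freedom in $s_m$, exactly the kind of estimate carried out in \cref{containment}; I expect a short computation of the form $k \cdot \re w = k' \cdot \re a' + k_m \re w_m$ and using $\re a' < \log R'$ to absorb the first term. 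Everything else is routine: log-holomorphy and boundedness on fibers are inherited, and the power-series identity is a formal rearrangement legitimized by the summability bounds.
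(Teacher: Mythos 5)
Your proposal is correct and follows essentially the same route as the paper's proof: fix a decomposition $f =_\tau \sum_p f_p$, restrict each $f_p$ and each $T(f_p)$ to the fiber over $a$, verify the norm inequality $\|T(f_p)(e^a,X_m)\|_{R_m,k_m,p} \le \|T(f_p)\|_{R,k,p}$ via the polyradius inclusion $(|e^a|,b) \in \cl\bigl(D^k_p(R)\bigr)$ for $b \in \cl\bigl(D^{k_m}_p(R_m)\bigr)$, and establish $T(f_a)(X_m) = T(f)(e^a,X_m)$ by a summability-justified interchange of the sum over $p$ with the substitution. The one small deviation is cosmetic: you anchor the polydisk inclusion to the computation style of \cref{containment}, whereas the paper invokes \cref{multi-dom_rmk} directly, but the underlying estimate is the same.
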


\begin{proof}
	%We proof part (1); since part (2) is similar, we leave it to the reader.
	Choose $f_p$, for $p \in \NN$, such that $f =_\tau \sum f_p$.  For $p \in \NN$, let $f_{a,p}:S^{\tau(a)}_p \into \CC$ be defined by $f_{a,p}(w):= f_p(a,w)$; these functions are well defined by \cref{multi-dom_rmk}, and $f_a(w) = \sum_p f_{a,p}(w)$ for every $w \in S^{\tau(a)}$.  
	
	For each $p$ we set $F_{a,p}(X_m):= T(f_p)(e^a,X_m)$, a generalized power series in the indeterminate $X_{m}$.  Since $\|T(f_p)\|_{R,k,p} < \infty$ and since, by \cref{multi-dom_rmk}, the polyradius  $\rho(b):= \left( |e^a|,b \right)$ belongs to $\cl\left(D^k_p\left(R\right)\right)$ for every $b \in \cl\left(D^{k_m}_p\left({R_m}\right)\right)$, we have that $$\|F_{a,p}\|_b = \|T(f_p)\|_{\rho(b)} \le \|T(f_p)\|_{R,k,p};$$ in particular, $\|F_{a,p}\|_{R_m,k_m,p} \le \|T(f_p)\|_{R,k,p}$ and $f_{a,p} = \bar{F_{a,p}}$, for each $p$.  
	Therefore $f_a =_{\tau(a)} \sum f_{a,p}$, that is, $f_a \in \G_{\tau(a)}$.  Since the inequality $\sum \|F_{a,p}\|_{R_m,k_m,p}\ r^p \le \sum \|T(f_p)\|_{R,k,p}\ r^p$ holds for all choices of $\{f_p\}$, we also get $\|f_a\|_{\tau(a)} \le \|f\|_\tau$.  \medskip
	
	\noindent\textbf{Claim.} The series $T(f)(e^a,X_m)$ belongs to $\Ps{R}{X_m^*}$ and is equal to $T(f_a)(X_m)$.\medskip
	
	To see this claim, since $\sum \|T(f_p)\|_{R,k,p}\ r^p < \infty$ and $\left(|e^a|,\rho^{\tau(a)}_p\right) \in \cl\left(D^k_p\left(R\right)\right)$ for each $p$, it follows from \cref{containment} that for each $\alpha_m \in [0,\infty)$ and each $p$,
	\begin{align*}
	\sum_{\alpha' \in [0,\infty)^{m-1}} |a_{(\alpha',\alpha_m),p}|\ |e^{\alpha'\cdot a}| &\le \frac{\|Tf_p\|_{R,k,p}}{\left(\rho_p^{\tau(a)}\right)^{\alpha_m}} \\
	&= \frac{\|Tf_p\|_{R,k,p}}{R_m^{\alpha_m}}\ (1+p)^{\alpha_m/k_m} \\
	&\le \frac{\|Tf_p\|_{R,k,p}}{R_m^{\alpha_m}}\ s^p\ (1+p)^{\alpha_m/k_m},
	\end{align*}
	for any $s > 1$.  In particular, for $s \in (1,r)$ we have, for each $\alpha_m \in [0,\infty)$, that
	\begin{equation} \label{b-finite}
	\sum_p \left(\sum_{\alpha'}|a_{(\alpha',\alpha_m),p}|\ |e^{\alpha'\cdot a}|\right) \le \frac1{R_m^{\alpha_m}} \sum_p \|Tf_p\|_{R,k,p}\ r^p \left(\frac sr\right)^p (1+p)^{\alpha_m/k_m} < \infty,
	\end{equation}
	which proves that $T(f)(e^a,X_m)$ belongs to $\Ps{R}{X_m^*}$.
	Moreover, since $f_a \in \G_{\tau(a)}$, we have $$T(f_a)(X_m) = \sum_p T(f_{a,p})(X_m) = \sum_p F_{a,p}(X_m).$$  
	
	It follows from \eqref{b-finite} that for all $\alpha_m$,
\begin{equation}
	\sum_{\alpha'}a_{(\alpha',\alpha_m)}e^{\alpha'\cdot a}=\sum_{\alpha'}\left(\sum_p a_{(\alpha',\alpha_m),p}\right)e^{\alpha'\cdot a}=\sum_p \sum_{\alpha'}a_{(\alpha',\alpha_m),p}e^{\alpha'\cdot a}.
	\end{equation}
Hence, $T(f)(e^a,X_m) = \sum_p F_{a,p}(X_m) = T(f_a)(X_m)$, as claimed.  
\end{proof}

\begin{prop}[Quasianalyticity] \label{multi_algebra_prop}
	The map $T:\G_\tau \into \Ps{C}{X^*}$ is an injective $\CC$-algebra homomorphism.
\end{prop}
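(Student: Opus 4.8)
The plan is to take the $\CC$-algebra homomorphism property more or less for granted (it is proved exactly as in the one-variable case) and to reduce the injectivity of $T$ to the one-variable quasianalyticity theorem \cref{quasianalyticity} by fixing all but the last variable via \cref{fixing_lemma} and then invoking the identity theorem. Concretely, $T$ is additive because $a_\alpha = \sum_p a_{\alpha,p}$ for each $\alpha$, and it is multiplicative because $\Delta$ is closed under addition and $T(f)(e^w)$ is the asymptotic expansion of $f$ at $-\infty$ (the several-variable analogue of \cref{k-sum_asymp_lemma}), which multiplies; so the substance of the proposition is the injectivity.

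So let $f \in \G_\tau$ with $Tf = 0$; we must show $f = 0$. First I would fix a nonempty open set $\Omega_0 \subseteq \bar\CC^{m-1}$ of the form $\set{a:\ \re a < \log R',\ \|\im a\| < \varepsilon}$ with $\varepsilon > 0$ small enough that $\theta(a) > \pi/2$ and that $a$ satisfies the hypotheses of \cref{fixing_lemma} for every $a \in \Omega_0$; this is possible by the last sentence of \cref{multi-dom_rmk}, and $\Omega_0 \cap \CC^{m-1}$ is open and nonempty. For each $a \in \Omega_0$, \cref{fixing_lemma} gives $f_a := f(a,\cdot) \in \G_{\tau(a)}$ with $T(f_a)(X_m) = T(f)(e^a,X_m) = 0$. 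Since $\tau(a)$ is a one-variable tuple (after replacing each nonzero $k_m$ in its index set by $1/k_m$, it is of the kind treated in \cref{pre_gen_multi_section}, as recorded in the remark following \cref{multi_gms}), \cref{quasianalyticity} applies and yields $f_a = 0$. Thus $f$ vanishes identically on $\set{a} \times S^{\tau(a)}$ for every $a \in \Omega_0$.

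It then remains to assemble these vanishing statements. By \cref{multi-dom_rmk}, $\set{a} \times S^{\tau(a)} \subseteq S^\tau$ for $a \in \Omega_0$; and for such $a$ the one-variable polysector $S^{\tau(a)} = \set{w:\ \re w < \log R_m,\ k_m|\im w| < \theta(a)\ \forall k \in K}$ contains a fixed open half-strip $V \subseteq \CC$ depending only on $\log R_m$ and on a positive lower bound for $\theta(a)$ on $\Omega_0$. Hence $f$ vanishes on the nonempty open set $(\Omega_0 \cap \CC^{m-1}) \times V \subseteq S^\tau \cap \CC^m$. Since $S^\tau \cap \CC^m = \set{w \in \CC^m:\ \re w < \log R,\ \sum_i k_i|\im w_i| < \theta\ \forall k \in K}$ is convex, hence a connected domain, and $f$ is holomorphic on it, the identity theorem forces $f \equiv 0$ there, and continuity then gives $f \equiv 0$ on $S^\tau$, i.e. $f = 0$ in $\G_\tau$.

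I expect the only real obstacle to be the verification that the fibration of $S^\tau$ by the sets $S^{\tau(a)}$, $a \in \Omega_0$, is uniform enough to produce an honest open subset of $S^\tau \cap \CC^m$ on which $f$ vanishes — that $S^{\tau(a)}$ does not degenerate as $a$ ranges over the open set $\Omega_0$, and that each such fiber genuinely lies inside $S^\tau$ — which is exactly the content of \cref{multi-dom_rmk}; with that in hand the identity theorem closes the argument. A subsidiary bookkeeping point is keeping the two Gevrey-order conventions straight when citing the one-variable result \cref{quasianalyticity}.
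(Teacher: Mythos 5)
Your proof is correct and follows essentially the same route as the paper's: fix the first $m-1$ variables via \cref{fixing_lemma}, apply one-variable quasianalyticity (the paper invokes \cref{watson_cor} directly, but this is the same thing) to each fiber $f_a$, and then conclude $f=0$ from holomorphicity on the connected domain $S^\tau$. Your extra bookkeeping — pinning down an open set $\Omega_0$ of admissible $a$, a uniform half-strip $V$ lying in every $S^{\tau(a)}$, and the $k \leftrightarrow 1/k$ convention change when citing the one-variable result — is all sound and simply makes explicit what the paper's proof compresses into a sentence.
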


\begin{proof}
	For the injectivity of $T$, let $f \in \G_\tau$ be such that $T(f) = 0$; we need to show that $f = 0$.  If $m=1$, this is done in  \cref{watson_cor}, so we assume $m > 1$.  Let $a \in \bar\CC^{m-1}$, and define $\theta(a)$ and $\tau(a)$ as in  \cref{multi-dom_rmk}.  Assume that $\theta(a) > \pi/2$ and $\re a < R'$; by \cref{fixing_lemma} and the assumption $Tf = 0$, we obtain $f_a \in \G_{\tau(a)}$ with $T(f_a) = 0$.  It follows from quasianalyticity of $\G_{\tau(a)}$ that $f_a = 0$.  Since the set of $a \in \bar\CC^{m-1}$ for which the latter holds contains an open set (by \cref{fixing_lemma}) and $f$ is holomorphic on its (connected) domain, it follows that $f=0$, as claimed.
\end{proof}

As in \cite[Cor. 2.19]{Dries:2000mx}, we now obtain

\begin{cor} \label{real_G}
	Let $f \in \G_\tau$.  Then $f(-\infty,R) \subseteq \RR$ if and only if $Tf \in \Ps{R}{X^*}$. \qed
\end{cor}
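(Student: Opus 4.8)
The plan is to use complex conjugation. Note first that each of the domains $S^\tau$, $S^\tau_p$ and $H^k_p(\log R)$ is invariant under $w\mapsto\overline w$, since all of them are cut out by conditions involving only $\re w$ and the $|\im w_i|$. Hence, for log-holomorphic $f$ on $S^\tau$, the function $f^\ast$ defined by $f^\ast(w):=\overline{f(\overline w)}$ is again log-holomorphic on $S^\tau$, and the first thing I would check is that $f\in\G_\tau$ implies $f^\ast\in\G_\tau$ with $T(f^\ast)=\overline{T(f)}$, the generalized power series obtained from $T(f)$ by conjugating every coefficient. Indeed, if $f=_\tau\sum_p f_p$ with associated series $T(f_p)(X)=\sum_\alpha a_{\alpha,p}X^\alpha$, then $f^\ast=_\tau\sum_p f_p^\ast$, where $f_p^\ast(w):=\overline{f_p(\overline w)}$ is log-holomorphic on $S^\tau_p$ and agrees on $H^k_p(\log R)$ with the log-sum of $\overline{T(f_p)}(X)=\sum_\alpha \overline{a_{\alpha,p}}X^\alpha$ (here one uses $\overline{e^{\alpha\overline w}}=e^{\alpha w}$ for real $\alpha$). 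The sup-norms $\|\cdot\|_{S^\tau_p}$ and $\|\cdot\|_{R,k,p}$ are visibly unchanged under conjugation, so this is an admissible decomposition and $\|f^\ast\|_\tau=\|f\|_\tau$; summing coefficients over $p$ yields $T(f^\ast)=\overline{T(f)}$. Since $T$ is injective by \cref{multi_algebra_prop}, it follows that
$$Tf\in\Ps{R}{X^*}\iff \overline{Tf}=Tf\iff T(f^\ast)=T(f)\iff f^\ast=f.$$

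With this in hand both implications fall out. If $Tf\in\Ps{R}{X^*}$, then $f^\ast=f$, so at every real point $w$ of $S^\tau$ we get $f(w)=f^\ast(w)=\overline{f(\overline w)}=\overline{f(w)}$, hence $f(w)\in\RR$, i.e. $f(-\infty,R)\subseteq\RR$. Conversely, assuming $f(-\infty,R)\subseteq\RR$, i.e. that $f$ is real-valued at all real points of $S^\tau$, the function $h:=f-f^\ast\in\G_\tau$ vanishes on the set of real points of $S^\tau$, which contains the non-empty box $B:=\prod_{i=1}^m(-\infty,\log R_i)$, an open subset of $\RR^m$. Since $B\subseteq S^\tau\cap\CC^m$, the latter is a domain, and $h$ is holomorphic on it, the identity theorem (a holomorphic function on a connected open subset of $\CC^m$ vanishing on a non-empty subset that is open in $\RR^m$ vanishes identically) gives $h\equiv 0$ on $S^\tau\cap\CC^m$, hence $h=0$ on $S^\tau$ by continuity. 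Therefore $f^\ast=f$ and so $Tf=T(f^\ast)=\overline{Tf}$, i.e. $Tf\in\Ps{R}{X^*}$.

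The only genuinely non-formal step is the compatibility of the whole set-up defining $\G_\tau$ — the choice of domains, the decompositions $f=_\tau\sum_p f_p$, the series $T(f_p)$ and the norms — with complex conjugation, which is what I would spell out carefully; once that is done the argument runs exactly as for \cite[Cor. 2.19]{Dries:2000mx}. A minor auxiliary point is that $S^\tau\cap\CC^m$ is indeed connected (in fact convex, being an intersection of half-spaces $\{\re w<\log R\}$ and convex sets $\{\sum_i k_i|\im w_i|<\theta\}$), so the identity theorem applies; alternatively, the converse implication could be obtained as in the one-variable case by fixing the first $m-1$ variables at real values via \cref{fixing_lemma} and invoking the Gevrey estimates of \cref{gevrey_estimates} to recover the coefficients of $Tf$ from the values of $f$ along the real direction.
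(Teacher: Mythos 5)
Your proof is correct and follows the intended approach: the paper gives no argument beyond citing \cite[Cor.\ 2.19]{Dries:2000mx}, and the standard Schwarz-reflection argument you spell out (conjugation-invariance of the domains $S^\tau$, $S^\tau_p$, $H^k_p$ and of the norms, hence $f^\ast\in\G_\tau$ with $T(f^\ast)=\overline{Tf}$, combined with quasianalyticity of $T$ and the identity theorem on the connected set $S^\tau\cap\CC^m$) is exactly the argument that reference is pointing to. The details you flag as needing care — compatibility of the decompositions $f=_\tau\sum_p f_p$ and the two norms with conjugation, and connectedness of $S^\tau\cap\CC^m$ — are indeed the ones to check, and your verifications are right.
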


%\subsection*{Question}
%If $m=1$, then the $\G_\tau$ defined above contain the corresponding $\G_\tau$ defined in \cite{Dries:2000mx}.  Is this still true if $m>1$?  The answer doesn't matter too much to us, but it matters if we want to know that the structure obtained here expands the one obtained in \cite{Dries:2000mx}.  Anyway, I'm guessing that in general, we might only be able to say that the old $\G_\tau$ is contained in a new $\G_{\tau'}$, where the $R$ in $\tau$ is replaced by any $R' < R$; this would be enough to answer the structure expansion question in the positive.

\subsection{Monomial division} \label{mon_div_section}
Let $F = \sum_{a \in [0,\infty)^m} a_\alpha X^\alpha \in \Ps{C}{X^*}$.  Recall from \cite[Section 4]{Dries:1998xr} that $$\ord(F)= \begin{cases}
\min\{|\alpha|:\ a_\alpha \ne 0\} &\text{if } f \ne 0, \\ \infty &\text{if } f = 0.
\end{cases}$$
For $i \in \{1, \dots, m\}$, we also consider $F$ as an element of $\Ps{C}{X_1^*, \dots, X_{i-1}^*, X_{i+1}^*, \dots, X_m^*}\left[\!\left[X_i^*\right]\!\right]$, and we denote by $\ord_i(F)$ the corresponding order of $F$ in the indeterminate $X_i$.  Note that $\ord_i(F)>0$ implies $\ord(F)>0$, for each $i$.

\begin{lemma} \label{mon_div_lemma}
	Let $f \in \G_\tau$ and assume that $\gamma:= \ord_m(Tf) > 0$.  Let also $s \in (1,r)$ and set $\tau':= (K,R,s,\theta,\Delta)$.  Then there exist $g \in \G_{\tau'}$ and $C = C(s/r) > 0$  depending only on $\frac sr$ such that $\|g\|_{\tau'} \le C \|f\|_{\tau'}$ and $$f(w) = e^{\gamma w_m} g(w) \quad\text{for } w \in S^{\tau'}.$$
\end{lemma}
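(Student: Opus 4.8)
The plan is to construct a decomposition of $g := e^{-\gamma w_m} f$ of the form demanded by the definition of $\G_{\tau'}$ directly from a near-optimal decomposition of $f$, after subtracting from each piece --- in a way that remains valid on \emph{all} of $S^\tau_p$ --- the part of its $T$-transform of order $<\gamma$ in $X_m$. Write $X = (X', X_m)$ with $X' = (X_1, \dots, X_{m-1})$ and $R^\flat := (R_1, \dots, R_{m-1})$, and assume $Tf \neq 0$. Fix $f =_\tau \sum_p f_p$ with $\sum_p \|T(f_p)\|_{R,k,p}\, r^p$ and $\sum_p \|f_p\|_{S^\tau_p}\, r^p$ each within a factor $2$ of $\|f\|_\tau$, and write $F_p := T(f_p) = \sum_\alpha a_{\alpha,p} X^\alpha$, so $Tf = \sum_\alpha a_\alpha X^\alpha$ with $a_\alpha = \sum_p a_{\alpha,p}$. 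Since $\gamma = \ord_m(Tf)$, we have $\gamma \in \Delta_m$, and since $\Delta_m$ is natural the set $\Delta_m \cap [0,\gamma)$ is finite, say $\{\beta_1 < \dots < \beta_N\}$ (independent of $p$). Let $c_{j,p}(X') := \sum_{\alpha'} a_{(\alpha',\beta_j),p}\, (X')^{\alpha'}$ be the coefficient of $X_m^{\beta_j}$ in $F_p$, so that the order-$<\gamma$-in-$X_m$ part of $F_p$ equals $\sum_{j=1}^N c_{j,p}(X')\, X_m^{\beta_j}$. The crucial observation is that $\sum_p c_{j,p} = 0$ for each $j$: summing over $p$ yields the coefficient of $X_m^{\beta_j}$ in $Tf$, which vanishes because $\beta_j < \gamma = \ord_m(Tf)$.

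Next I would check that each $c_{j,p}$ converges on the full log-polydisk $H(\log R^\flat)$. This is exactly what the scalar-product definition of the log-$k$-polydisk $D^k_p(R)$ buys: its projection to the first $m-1$ coordinates is all of $D(R^\flat)$, since given $z' \in D(R^\flat)$ one can pick $z_m$ small enough that $(z',z_m)$ lies in $D^k_p(R)$. Hence $\|T(f_p)\|_{R,k,p} < \infty$ forces $\|c_{j,p}\|_{D(R^\flat)} \le C(1+p)^{O(1)}\|T(f_p)\|_{R,k,p}$, so $\bar{c_{j,p}}$ is log-holomorphic and bounded on $H(\log R^\flat)$. Since $S^\tau_p \subseteq H(\log R)$, its projection to the first $m-1$ coordinates lies in $H(\log R^\flat)$, so
\[
g_p(w) := e^{-\gamma w_m}\Bigl(f_p(w) - \sum_{j=1}^N \bar{c_{j,p}}(w')\, e^{\beta_j w_m}\Bigr)
\]
is well defined on all of $S^{\tau'}_p = S^\tau_p$. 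On the log-disk $H(\log \rho^\tau_p) \subseteq S^\tau_p$ (\cref{containment}) we have $f_p = \bar{F_p}$, and there $f_p(w) - \sum_j \bar{c_{j,p}}(w')\, e^{\beta_j w_m} = e^{\gamma w_m}\,\bar{G_p}(w)$ with $G_p := \sum_{\alpha_m \ge \gamma} a_{\alpha,p}\,(X')^{\alpha'}\, X_m^{\alpha_m - \gamma}$ a convergent generalized power series of natural support; enlarging $\Delta$ harmlessly to the addition-closure of $\Delta_1 \times \dots \times \Delta_{m-1} \times \bigl((\Delta_m - \gamma)\cap[0,\infty)\bigr)$, we may take $\supp G_p \subseteq \Delta$. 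The several-variable Gevrey estimates then give that $g_p$ extends log-holomorphically (and boundedly) across the $\{w_m = -\infty\}$ part of $S^\tau_p$, with $T(g_p) = G_p$.

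It remains to verify the two finiteness conditions and to identify the sum. Division by $X_m^\gamma$ together with the shape of $D^k_p(R)$ gives $\|G_p\|_{R,k,p} \le C(1+p)^{O(1)}\|F_p\|_{R,k,p}$; off $H(\log\rho^\tau_p)$ one has $|e^{-\gamma w_m}|$ bounded on $S^\tau_p$ and $|\bar{c_{j,p}}(w')| \le \|c_{j,p}\|_{D(R^\flat)}$, whence $\|g_p\|_{S^\tau_p} \le C(1+p)^{O(1)}\bigl(\|f_p\|_{S^\tau_p} + \|F_p\|_{R,k,p}\bigr)$. Summing against $s^p = (s/r)^p r^p$ and using $\sup_p (1+p)^{O(1)}(s/r)^p < \infty$ absorbs the polynomial factors into the geometric weight, so $\sum_p \|G_p\|_{R,k,p} s^p$ and $\sum_p \|g_p\|_{S^\tau_p} s^p$ are both $\le C(s/r)\,\|f\|_{\tau'}$ (taking the initial decomposition near-optimal), giving $g =_{\tau'} \sum_p g_p \in \G_{\tau'}$ with $\|g\|_{\tau'} \le C(s/r)\|f\|_{\tau'}$. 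Finally, termwise summation --- justified by the uniform convergence coming from $\sum_p \|c_{j,p}\|_{D(R^\flat)}\, r^p < \infty$ --- yields $\sum_p g_p = e^{-\gamma w_m}\sum_p f_p - \sum_{j=1}^N e^{(\beta_j - \gamma) w_m}\sum_p \bar{c_{j,p}}(w') = e^{-\gamma w_m} f$, since $\sum_p \bar{c_{j,p}} = \overline{\sum_p c_{j,p}} = 0$; equivalently $f(w) = e^{\gamma w_m} g(w)$ on $S^{\tau'}$.

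The step I expect to be the main obstacle is the one flagged above: arranging that the correction terms $\bar{c_{j,p}}(w')\, e^{\beta_j w_m}$ make the division by $e^{\gamma w_m}$ legitimate on \emph{all} of $S^\tau_p$ --- not merely on a log-disk around $-\infty$ --- while still summing to zero over $p$. This needs three facts to cooperate: $\Delta_m \cap [0,\gamma)$ is finite (so the correction is an $X_m$-polynomial whose coefficients are generalized series in $X'$), $\sum_p c_{j,p} = 0$, and $D^k_p(R)$ projects onto a full log-polydisk in the first $m-1$ coordinates (so those coefficient series converge far enough out). In the one-variable case none of this arises: there the correction is a finite $\CC$-linear combination of the entire functions $e^{\beta_j w}$, so $g_p = e^{-\gamma w}\bigl(f_p - \sum_j a_{\beta_j,p}\, e^{\beta_j w}\bigr)$ is automatically log-holomorphic on $S^\tau_p$, and only the norm bookkeeping remains.
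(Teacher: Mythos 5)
Your proposal follows the same overall strategy as the paper: fix a near-optimal decomposition $f =_\tau \sum_p f_p$, replace each $f_p$ by $f_p$ minus the log-sum of the truncation $(Tf_p)_\gamma$ of $T(f_p)$ at $X_m$-order $\gamma$, then divide by $e^{\gamma w_m}$ and track norms via $\sup_p (1+p)^{O(1)}(s/r)^p < \infty$. The paper compresses the entire replacement step into one clause --- ``we may assume, after replacing each $f_p$ by $f_p - \bar{(Tf_p)_\gamma}$ if necessary, that $\ord_m(Tf_p) \ge \gamma$'' --- and what you single out as the ``main obstacle'' is precisely the content that this one-liner elides: you must check that (a) the correction terms $\bar{c_{j,p}}(w')e^{\beta_j w_m}$ are well-defined on all of $S^\tau_p$, not just near $-\infty$; (b) $\sum_p c_{j,p} = 0$ so that the modified sequence still sums to $f$; and (c) the modified norms remain under control. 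Your argument for (b) (the coefficient of $X_m^{\beta_j}$ in $Tf$ vanishes since $\beta_j < \gamma = \ord_m(Tf)$) is exactly right, and your norm bookkeeping with the $(1+p)^{O(1)}$ factors absorbed into $(s/r)^p$ matches what the paper does via $\|Tg_p\|_{\rho_p} = \rho_{p,m}^{-\gamma}\|Tf_p\|_{\rho_p}$.

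Two wording issues worth fixing, though they do not undermine the approach. First, your claim that the projection of $D^k_p(R)$ to the first $m-1$ coordinates ``is all of $D(R^\flat)$'' only holds when $k_m > 0$; when $k_m = 0$ the projection is $D^{k'}_p(R^\flat)$, which is strictly smaller. What you actually need (and what suffices) is that for each $s' \in (0,R^\flat)$ one can find $s_m$, depending polynomially on $p$, with $(s',s_m) \in \cl(D^k_p(R))$; this yields $\|c_{j,p}\|_{s'} \le (1+p)^{O(1)}\|Tf_p\|_{R,k,p}$ and hence the desired boundedness of $\bar{c_{j,p}}$ on the relevant region. Second, the estimate ``off $H(\log\rho^\tau_p)$ one has $|e^{-\gamma w_m}|$ bounded on $S^\tau_p$'' is not literally true, because $w$ can fail to be in $H(\log\rho^\tau_p)$ on account of some coordinate $i < m$ while $\re w_m$ is arbitrarily negative; the estimate should be split according to whether $\re w_m < \log\rho^\tau_{p,m}$ or not, using the series bound for $\bar{G_p}$ in the former case and the $|e^{-\gamma w_m}| \le \rho_{p,m}^{-\gamma}$ bound in the latter. (The paper's own estimate $\|g_p\|_{S_p\setminus H(\log\rho_p)} \le \rho_{p,m}^{-\gamma}\|f_p\|_{S_p\setminus H(\log\rho_p)}$ has the same imprecision.) With these adjustments your proof is correct and fills in genuine detail the paper leaves implicit.
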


\begin{proof}
	For simplicity, we write $\rho_p = \rho_p^\tau = \rho_p^{\tau'}$ and $S_p = S_p^\tau = S_p^{\tau'}$, for each $p$; recall that $\rho_p \in \cl\left(D^k_p(R)\right)$ for each $p$.  Say $f =_\tau \sum_p f_p$ with $\sum_p \|Tf_p\|_{R,k,p}\ r^p \le 2\|f\|_\tau$ and $\sum_p \|f_p\|_{S_p}\ r^p \le 2\|f\|_\tau$; since each $Tf_p$ is convergent we may assume, after replacing each $f_p$ by $f_p - \bar{(Tf_p)_\gamma}$ if necessary\footnote{where $(Tf_p)_\gamma$ denotes the truncation of $Tf_p$ at the exponent $\gamma$ with respect to the indeterminate $X_m$}, that $\ord_m(Tf_p) \ge \gamma$ for each $p$.  So there are $G_p \in \Ps{C}{X^*}$ with support contained in $\supp(Tf_p)$ (and hence natural) such that $Tf_p = X_m^\gamma G_p$; since $\|Tf_p\|_{\rho_p} = \rho_{p,m}^\gamma \|G_p\|_{\rho_p}$, it follows that $G_p$ has radius of convergence at least $\rho_p$, and that $$f_p(w) = e^{\gamma w_m} g_p(w) \quad\text{for } w \in H(\log\rho_p),$$ where $g_p:= \bar{G_p}$ for each $p$.  We extend $g_p$ to all of $S_p$ by setting $g_p(w):= f_p(w)/e^{\gamma w_m}$ for $w \in S_p\setminus H(\log\rho_p)$.
	
	Since $\|Tf_p\|_{\rho_p} = \rho_{p,m}^\gamma \|Tg_p\|_{\rho_p}$ for each $p$, we get
	\begin{equation*}
		\sum_p \|Tg_p\|_{\rho_p} s^p = \sum_p \frac{\|Tf_p\|_{\rho_p}}{\rho_{p,m}^\gamma} s^p \le \frac1{R_m^\gamma} \sum_p \|Tf_p\|_{\rho_p} r^p (1+p)^{k_m} \left(\frac sr\right)^p \le C \|g\|_\tau
	\end{equation*}
	for some $C = C(s/r)>0$ depending only on $\frac sr$.  It follows, on the one hand, that $$\sum_p \|g_p\|_{H(\log \rho_p)} s^p \le C \|g\|_\tau$$ as well.  On the other hand, since $\|Tg_p\|_{S_p \setminus H(\log\rho_p)} \le \rho_{p,m}^{-\gamma}\|Tf_p\|_{S_p\setminus H(\log\rho_p)}$, the same argument as above also proves that $\sum_p \|g_p\|_{S_p \setminus H(\log\rho_p)} s^p \le C \|g\|_\tau$.  Therefore, the function $g:S^{\tau'} \into \CC$ defined by $g:= \sum_p g_p$ belongs to $\G_{\tau'}$ and satisfies $\|g\|_{\tau'} \le C \|g\|_\tau$ and $f(w) = e^{\gamma w_m} g(w)$ for $w \in S^{\tau'}$, as claimed.
\end{proof}

\subsection{Generalized multisummable germs} \label{gen_mult_germs_section}
Similar to Section 2 of \cite{Dries:2000mx}, we let $(\T_m, \le)$ be the directed set of all tuples $\tau = (K,R,r,\theta, \Delta)$ as above, where $\tau = (K,R,r,\theta,\Delta) \le \tau' = (K',R',r',\theta',\Delta')$ if and only if $K \supseteq K'$, $R \le R'$, $r \le r'$, $\theta \le \theta'$ and $\Delta \supseteq \Delta'$.  Then $S^{\tau'} \subseteq S^\tau$ whenever $\tau' \le \tau$ and in this situation, for $f \in \G_\tau$, the restriction $f\rest{S^{\tau'}}$ belongs to $\G_{\tau'}$.  The directed limit of the directed system $\left(\G_\tau:\ \tau \in \T_m\right)$ under these restrictions is the set $\G_m$ of \textbf{germs at $-\infty$} of functions in $\G_\tau$, as $\tau$ ranges over $\T_m$.  This $\G_m$ is a $\CC$-algebra containing the germs at $-\infty$ of the functions $e^{\gamma z_i}$, for $i=1, \dots, m$ and $\gamma \ge 0$, and we extend each norm $\|\cdot\|_\tau$ to all of $\G_m$ by setting $\|f\|_\tau := \infty$ whenever $g \notin \G_\tau$.

Below, we write $-\infty:=(-\infty, \dots, -\infty)$.
Note that $f(-\infty) = (Tf)(0)$, since $Tf$ is the asymptotic expansion of $f$ at $-\infty$; hence $f(-\infty) = 0$ if and only if $\ord(Tf)>0$.

\begin{lemma} \label{directed_lemma}
	Let $f \in \G_{m}$.
	\begin{enumerate}
		\item For $g \in \G_m$ and $\tau \in \T_m$, we have $\|fg\|_\tau \le \|f\|_\tau \|g\|_\tau$.
		\item If $f(-\infty) = 0$, then $\,\lim_{\tau} \norm{f}_{\tau} = 0$, where the limit is taken over the downward directed set $\T_m$.
		\item If $f(-\infty) \neq 0$, then $f$ is a unit in $\G_{m}$.
		\item If $f(-\infty) = 0$, then there are $\gamma_i > 0$ and $f_i \in \G_{m}$ for $i =1,
		\mdots,  m$ such that $f = e^{\gamma_1 z_1} \cdot f_1 + \cdots + e^{\gamma_m z_m} \cdot f_m$.
		\item If $m > 1$, then the germ $f(-\infty,\cdot)$ belongs to $\G_{m-1}$.
	\end{enumerate}
\end{lemma}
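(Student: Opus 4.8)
The plan is to prove the five parts in the order (1), (2), (3), (5), (4); no circularity arises, since part (4) will only use part (5), and part (5) is independent of part (4). Throughout, each statement about germs is reduced to its function version by passing to a representative in a suitable $\G_\tau$. \textbf{Part (1)} is a Cauchy-product estimate: for representations $f=_\tau\sum_p f_p$ and $g=_\tau\sum_q g_q$, the log-$k$-polydisks $H^k_p(\log R)$ shrink as $p$ grows, so $S^\tau_n\subseteq S^\tau_p$ and $\cl(D^k_n(R))\subseteq\cl(D^k_p(R))$ for $n\ge p$; hence for $p+q=n$ one has $\|f_pg_q\|_{S^\tau_n}\le\|f_p\|_{S^\tau_p}\|g_q\|_{S^\tau_q}$ and $\|T(f_p)T(g_q)\|_{R,k,n}\le\|T(f_p)\|_{R,k,p}\|T(g_q)\|_{R,k,q}$. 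With $h_n:=\sum_{p+q=n}f_pg_q$ one checks that $T(h_n)=\sum_{p+q=n}T(f_p)T(g_q)$ has support in $\Delta+\Delta=\Delta$, that $h_n=\bar{T(h_n)}$ on $H^k_n(\log R)$, and that $fg=_\tau\sum_n h_n$, the finiteness of the two defining sums following from $\sum_n(\cdots)r^n\le(\sum_p(\cdots)r^p)(\sum_q(\cdots)r^q)$; taking infima over representations gives $\|fg\|_\tau\le\|f\|_\tau\|g\|_\tau$.

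For \textbf{part (2)}, fix a representative $f=_{\tau_0}\sum_p f_p$. If $0\notin\Delta_0$ then $f_p(-\infty)=(Tf_p)(0)=0$ automatically; if $0\in\Delta_0$ then the constant $1$ lies in $\G_{\tau_0}$, and since $\sum_p f_p(-\infty)=f(-\infty)=0$ converges absolutely against $r_0^p$ we may subtract these constants — either way we may assume $f_p(-\infty)=0$, i.e.\ $\ord(Tf_p)>0$, for all $p$. Now let $\tau\le\tau_0$ agree with $\tau_0$ except for a smaller radius $R$: since $\ord(Tf_p)>0$ we have $\|Tf_p\|_{R,k,p}\to0$ as $R\to0$, and since $f_p$ is continuous at $-\infty$ with value $0$ and $\bigcap_{R>0}S^\tau_p=\{-\infty\}$ we have $\|f_p\|_{S^\tau_p}\to0$; dominated convergence against the majorants $\|Tf_p\|_{R_0,k,p}r_0^p$ and $\|f_p\|_{S^{\tau_0}_p}r_0^p$ then gives $\|f\|_\tau\le\max\big(\sum_p\|Tf_p\|_{R,k,p}r_0^p,\ \sum_p\|f_p\|_{S^\tau_p}r_0^p\big)\to0$, which with $\|f\|_{\tau'}\le\|f\|_\tau$ for $\tau'\le\tau$ yields $\lim_\tau\|f\|_\tau=0$. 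For \textbf{part (3)}, put $c:=f(-\infty)\ne0$ and $u:=1-c^{-1}f$, so $u(-\infty)=0$; by (2) choose $\tau$ with $u\in\G_\tau$ and $\|u\|_\tau<1$. Since $\Delta$ is closed under addition, $u^n\in\G_\tau$ for $n\ge1$ with $\|u^n\|_\tau\le\|u\|_\tau^n$ by (1), so by completeness of $\G_\tau$ the series $w:=\sum_{n\ge1}u^n$ converges there; the telescoping identity $uw=w-u$ gives $(1-u)(1+w)=1$ in $\G_m$, so $f=c(1-u)$ is a unit.

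\textbf{Part (5)} follows by fibering the representation $f=_\tau\sum_p f_p$ over $z_1=-\infty$, by the method of \cref{fixing_lemma} (cf.\ \cref{multi-dom_rmk}): the fiber of $S^\tau_p$ over $z_1=-\infty$ contains $S^{\tau'}_p$, where $\tau'$ is obtained from $\tau$ by deleting the first coordinate (the value $\im(-\infty)=0$ causes no loss in the aperture $\theta$), and evaluation at $X_1=0$ only decreases the norms $\|\cdot\|_{R,k,p}$, so $f(-\infty,\cdot)=_{\tau'}\sum_p f_p(-\infty,\cdot)$ lies in $\G_{m-1}$, with $T(f(-\infty,\cdot))=Tf(0,X_2,\dots,X_m)$. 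Finally, \textbf{part (4)} is proved by induction on $m$. For $m=1$, $f(-\infty)=0$ gives $\gamma:=\ord(Tf)>0$ and \cref{mon_div_lemma} yields $f=e^{\gamma z_1}f_1$. For $m>1$, first note that any germ in $\G_{m-1}$ in the variables $z_2,\dots,z_m$ extends canonically to a germ in $\G_m$ that is constant in $z_1$, by adjoining to $\tau$ a first coordinate $k_1=0$, an arbitrary radius $R_1$ and the factor $\{0\}$ to $\Delta$ — this is precisely the closure of the log-polysectors under cartesian product with log-disks recorded in \cref{poly_section}. Now let $f(-\infty)=0$; by (5), $g:=f(-\infty,z_2,\dots,z_m)\in\G_{m-1}$ has $g(-\infty)=0$, so by induction $g=\sum_{i=2}^m e^{\gamma_i z_i}g_i$ with all $\gamma_i>0$, and we let $\tilde g,\tilde g_i\in\G_m$ be the associated $z_1$-constant germs. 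Since $T\tilde g=Tf(0,X_2,\dots,X_m)$, the series $T(f-\tilde g)$ consists exactly of the monomials of $Tf$ in which $X_1$ occurs; if $T(f-\tilde g)=0$ then $f=\tilde g$ by \cref{multi_algebra_prop}, and otherwise $\gamma_1:=\ord_1(T(f-\tilde g))>0$, so \cref{mon_div_lemma} (after a permutation of coordinates moving $X_1$ to the last slot) gives $f-\tilde g=e^{\gamma_1 z_1}f_1$ with $f_1\in\G_m$; in both cases $f=e^{\gamma_1 z_1}f_1+\sum_{i=2}^m e^{\gamma_i z_i}\tilde g_i$ is of the required form. The heart of the matter is this last step: since $\supp(Tf)$ need not be divisible by a single monomial, one must peel off the $X_1$-free part of $f$ (identified through part (5)) and recurse, which is what forces the joint use of \cref{mon_div_lemma} and the embedding $\G_{m-1}\hookrightarrow\G_m$; everything else is bookkeeping with the two norms defining $\|\cdot\|_\tau$, a dominated-convergence argument, and a Neumann series.
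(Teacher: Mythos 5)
Your proof is correct and follows essentially the same route as the paper's (which for parts (2)--(4) simply defers to Lemma 2.14 of van den Dries--Speissegger's multisummable-series paper, adding only ``using \cref{mon_div_lemma} for part (4)'', and for part (5) cites \cref{fixing_lemma}): a Cauchy-product estimate for (1), a dominated-convergence argument as $R\to 0$ for (2), a Neumann series for (3), induction on $m$ peeling off the $X_1$-free part via \cref{fixing_lemma} and \cref{mon_div_lemma} for (4), and fibering for (5). You have filled in the details the paper delegates to the reference, and they all check out — in particular, the reduction to $f_p(-\infty)=0$ in (2), the monotonicity $\|f\|_{\tau'}\le\|f\|_\tau$ for $\tau'\le\tau$, and the observation in (4) that $T(f-\tilde g)$ carries only monomials with $\alpha_1>0$ so that naturality of the support forces $\ord_1(T(f-\tilde g))>0$.
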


\begin{proof}
	Parts (2)--(4) are similar to Lemma 2.14 in \cite{Dries:2000mx}, using \cref{mon_div_lemma} for part (4).  Part (5) follows from \cref{fixing_lemma} with $\nu = 1$.
\end{proof}

Similar to \cite[Section 3]{Dries:2000mx}, we set $\Pc{C}{X^*}_\tau := T(\G_\tau)$, for $\tau \in \T_m$, and $\Pc{C}{X^*}_\G := T(\G_m)$. We refer to the latter as the $\CC$-algebra of all \textbf{generalized multisummable series} in the indeterminates $X$; note that $\Pc{C}{X^*}_\G = \bigcup_{\tau \in \T_m} \Pc{C}{X^*}_\tau$.  We make each $\Pc{C}{X^*}_\tau$ into an isomorphic copy of $\G_\tau$ by setting $\|Tf\|_\tau := \|f\|_\tau$, for $f \in \G_\tau$; and we extend each norm $\|\cdot\|_\tau$ to all of $\Pc{C}{X^*}_\G$ by setting $\|F\|_\tau:= \infty$ for $F \notin \Pc{C}{X^*}_\tau$.

Extending our notation of log-sum of convergent generalized power series, we also shall write $\bar F:= T^{-1}(F)$, for $F \in \Pc{C}{X^*}_\tau$.

\subsection{Mixed series} \label{mixed_section}
We now consider additional indeterminates $Y = (Y_1, \dots, Y_n)$, and we defined mixed series similar to \cite[Section 3]{Dries:2000mx}.  Thus, for $\tau \in \T_m$ and $\rho \in (0,\infty)^n$, and for $F = \sum_{\beta \in \NN^n} F_\beta(X) Y^\beta \in \Pc{C}{X^*}_\tau [\![Y]\!]$, we set
\begin{equation*}
	\|F\|_{\tau,\rho}:= \sum_\beta \|F_\beta\|_\tau \rho^\beta
\end{equation*}
and
\begin{equation*}
	\Pc{C}{X^*;Y}_{\tau,\rho} := \Pc{C}{X^*}_\tau \left\{Y\right\}_\rho = \set{F \in \Pc{C}{X^*}_\tau [\![Y]\!]:\ \|F\|_{\tau,\rho} < \infty}.
\end{equation*}
The latter is a Banach $\CC$-algebra with respect to the former norm.
We sometimes refer to the indeterminates $X_i$ as the \textbf{generalized Gevrey} variables and to the indeterminates $Y_j$ as the \textbf{convergent} variables.  Each $F = \sum_\beta F_\beta(X)Y^\beta \in \Pc{C}{X^*;Y}_{\tau,\rho}$ defines a log-holomorphic function $\bar F:S^\tau \times D(\rho) \into \CC$ by setting $$\bar F(w,y):= \sum_\beta \bar{F_\beta}(w)y^\beta.$$  As in \cref{gen_mult_germs_section}, we set $$\Pc{C}{X^*;Y}_\G := \bigcup_{\tau \in \T_m, \rho \in (0,\infty)^n} \Pc{C}{X^*;Y}_{\tau,\rho},$$  and we extend each norm $\|\cdot\|_{\tau,\rho}$ to all of $\Pc{C}{X^*;Y}_\G$ by setting $\|F\|_{\tau,\rho}:= \infty$ whenever $F \notin \Pc{C}{X^*;Y}_{\tau,\rho}$.  Ordering the product $\T_m \times (0,\infty)^n$ by the product order, we obtain the following generalization of \cref{directed_lemma}:

\begin{lemma} \label{mixed_directed_lemma}
	Let $F \in \Ps{C}{X^*;Y}_\G$.
	\begin{enumerate}
		\item For $G \in \Pc{C}{X^*;Y}_\G$, $\tau \in \T_m$ and $\rho \in (0,\infty)^n$, we have $\|FG\|_{\tau,\rho} \le \|F\|_{\tau,\rho} \|G\|_{\tau,\rho}$.
		\item If $\bar F(-\infty,0) = 0$, then $\,\lim_{(\tau,\rho)} \norm{F}_{\tau,\rho} = 0$, where the limit is taken over the downward directed set $\T_m \times (0,\infty)^n$.
		\item If $\bar F(-\infty,0) \neq 0$, then $F$ is a unit in $\Pc{C}{X^*;Y}_\G$. 
		\item If $m > 1$ and $X' = (X_2, \dots, X_m)$, then $F(0,X',Y)$ belongs to $\Pc{C}{(X')^*;Y}_\G$.
		\item $\Pc{C}{X^*;Y}_\G$ is complete in each norm $\|\cdot\|_{\tau,\rho}$.
		\item $\Pc{C}{X^*;Y}_\G \subseteq \Pc{C}{(X,Y)^*}_\G$.
	\end{enumerate}
\end{lemma}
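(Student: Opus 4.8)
The plan is to get parts (1)--(5) as essentially routine consequences of \cref{directed_lemma} and the Banach-algebra structure of $\Pc{C}{X^*;Y}_{\tau,\rho}$ recorded above, and to put the actual work into part (6).

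For (1): if $F=\sum_\beta F_\beta(X)Y^\beta$ and $G=\sum_\beta G_\beta(X)Y^\beta$, then $(FG)_\beta=\sum_{\beta'+\beta''=\beta}F_{\beta'}G_{\beta''}$; since $T\colon\G_m\to\Pc{C}{X^*}_\G$ is a $\CC$-algebra isomorphism onto its image, \cref{directed_lemma}(1) gives $\|F_{\beta'}G_{\beta''}\|_\tau\le\|F_{\beta'}\|_\tau\|G_{\beta''}\|_\tau$, and multiplying the two absolutely convergent series $\sum_{\beta'}\|F_{\beta'}\|_\tau\rho^{\beta'}$ and $\sum_{\beta''}\|G_{\beta''}\|_\tau\rho^{\beta''}$ yields $\|FG\|_{\tau,\rho}\le\|F\|_{\tau,\rho}\|G\|_{\tau,\rho}$ (with nothing to prove when either factor has infinite norm). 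For (2): $\bar F(-\infty,0)=\bar{F_0}(-\infty)=(T^{-1}F_0)(-\infty)$, so the hypothesis is $(T^{-1}F_0)(-\infty)=0$; by \cref{directed_lemma}(2), $\|F_0\|_\tau\to0$ along $\T_m$, while for $(\tau,\rho)\le(\tau_1,\rho_1)$ with $\|F\|_{\tau_1,\rho_1}<\infty$ the restriction maps are norm-nonincreasing, so $\sum_{\beta\ne0}\|F_\beta\|_\tau\rho^\beta\le\sum_{\beta\ne0}\|F_\beta\|_{\tau_1}\rho^\beta\to0$ as $\rho\to0$; hence $\|F\|_{\tau,\rho}=\|F_0\|_\tau+\sum_{\beta\ne0}\|F_\beta\|_\tau\rho^\beta\to0$. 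For (3): write $F=c(1-G)$ with $c=\bar F(-\infty,0)\ne0$ and $G=1-c^{-1}F$, so $\bar G(-\infty,0)=0$; by (2) choose $(\tau,\rho)$ with $\|G\|_{\tau,\rho}<1$, and then by (1) and completeness of $\Pc{C}{X^*;Y}_{\tau,\rho}$ the Neumann series $\sum_{j\ge0}G^j$ converges there and inverts $1-G$, so $c^{-1}\sum_jG^j$ inverts $F$ in $\Pc{C}{X^*;Y}_\G$.

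For (4): fixing $X_1=0$ and applying \cref{fixing_lemma} (the norm-controlled, function-level form of \cref{directed_lemma}(5)) coefficientwise, with $(\tau,\rho)$ chosen so that $\|F\|_{\tau,\rho}<\infty$, there is $\tau''\in\T_{m-1}$ depending only on $\tau$ with $F_\beta(0,X')\in\Pc{C}{(X')^*}_{\tau''}$ and $\|F_\beta(0,X')\|_{\tau''}\le\|F_\beta\|_\tau$ for all $\beta$; summing over $\beta$ gives $\sum_\beta\|F_\beta(0,X')\|_{\tau''}\rho^\beta\le\|F\|_{\tau,\rho}<\infty$, so $F(0,X',Y)\in\Pc{C}{(X')^*;Y}_{\tau'',\rho}\subseteq\Pc{C}{(X')^*;Y}_\G$. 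For (5): $\Pc{C}{X^*}_\tau\cong\G_\tau$ is complete (the analogue of \cref{complete_rmk}), so $\Pc{C}{X^*;Y}_{\tau,\rho}$, being a weighted $\ell^1$-direct sum over $\beta\in\NN^n$ of copies of the Banach space $\Pc{C}{X^*}_\tau$, is again a Banach space; a sequence Cauchy for the extended norm $\|\cdot\|_{\tau,\rho}$ is eventually contained in $\Pc{C}{X^*;Y}_{\tau,\rho}$ with bounded norm, hence converges there, its limit lying in $\Pc{C}{X^*;Y}_\G$.

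The substance is in (6), which says a mixed generalized multisummable series is genuinely generalized multisummable in all the variables; the idea is that the convergent variables $Y_j$ correspond, in this formalism, to Gevrey indices $0$. Given $F=\sum_\beta F_\beta(X)Y^\beta\in\Pc{C}{X^*;Y}_{\tau,\rho}$ with $\tau=(K,R,r,\theta,\Delta)$, I set $K^+:=\set{(k,0,\dots,0):k\in K}\subseteq[0,\infty)^{m+n}$, $R^+:=(R,\rho)$, $\Delta^+:=\Delta\times\NN^n$ (natural and closed under addition), and $\tau^+:=(K^+,R^+,r,\theta,\Delta^+)\in\T_{m+n}$. A direct check from the definitions in \cref{poly_section} gives, for $k^+=(k,0,\dots,0)$ and all $p$, that $S^{\tau^+}=S^\tau\times H(\log\rho)$, $S^{\tau^+}_p=S^\tau_p\times H(\log\rho)$ and $H^{k^+}_p(\log R^+)=H^k_p(\log R)\times H(\log\rho)$. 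For each $\beta$ pick a representation $\bar{F_\beta}=_\tau\sum_p f_{\beta,p}$ with $\sum_p\|T(f_{\beta,p})\|_{R,k,p}\,r^p$ and $\sum_p\|f_{\beta,p}\|_{S^\tau_p}\,r^p$ both at most $2\|F_\beta\|_\tau$, and put $H_p(X,Y):=\sum_\beta T(f_{\beta,p})(X)Y^\beta$ and $h_p(w',y):=\sum_\beta f_{\beta,p}(w')e^{\beta\cdot y}$. Since $\|f_{\beta,p}\|_{S^\tau_p}\le2\|F_\beta\|_\tau$ and $\sum_\beta\|F_\beta\|_\tau\rho^\beta<\infty$, these $\beta$-series converge uniformly on $S^{\tau^+}_p$ and on $D^{k^+}_p(R^+)$, so each $h_p\colon S^{\tau^+}_p\into\CC$ is log-holomorphic and bounded, each $H_p$ is a convergent generalized power series with support in $\Delta^+$, and $h_p=\bar{H_p}$ on $H^{k^+}_p(\log R^+)$. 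Exchanging $\sum_p$ and $\sum_\beta$ (absolute convergence) and using $|e^{\beta\cdot y}|\le\rho^\beta$ on $H(\log\rho)$,
\[
\sum_p\|H_p\|_{R^+,k^+,p}\,r^p\le\sum_\beta\rho^\beta\sum_p\|T(f_{\beta,p})\|_{R,k,p}\,r^p\le2\|F\|_{\tau,\rho}<\infty,
\]
and likewise $\sum_p\|h_p\|_{S^{\tau^+}_p}\,r^p\le2\|F\|_{\tau,\rho}<\infty$; hence the log-sum $g$ of $F$ in the $(X,Y)$-chart satisfies $g=_{\tau^+}\sum_p h_p$, so $g\in\G_{\tau^+}$, and applying $T$ coefficientwise in $Y$ gives $T(g)=\sum_\beta\bigl(\sum_pT(f_{\beta,p})\bigr)Y^\beta=F$, whence $F\in\Pc{C}{(X,Y)^*}_{\tau^+}\subseteq\Pc{C}{(X,Y)^*}_\G$. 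I expect the only real obstacle to be the bookkeeping: checking the three domain identities and confirming that both $\G_{\tau^+}$-membership sums are controlled by $\|F\|_{\tau,\rho}$; once the domains are matched, the rest is just the interchange of the two summations.
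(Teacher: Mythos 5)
Your proof is correct and follows essentially the same route as the paper: parts (1)--(3) are derived from \cref{directed_lemma} and the Banach-algebra structure, part (4) from the fiber lemma (\cref{fixing_lemma}) applied coefficientwise, part (5) from the $\ell^1$-weighted-sum description of $\Pc{C}{X^*;Y}_{\tau,\rho}$ over the complete space $\Pc{C}{X^*}_\tau$, and part (6) by assigning Gevrey index $0$ to the convergent variables. The paper itself is terser — it cites \cite[Lemma 3.5]{Dries:2000mx} for part (6) rather than writing it out — but your explicit construction of $\tau^+=(K^+,R^+,r,\theta,\Delta^+)$ with $K^+=\{(k,0,\dots,0):k\in K\}$ and the verification of the domain identities and norm bounds is precisely the adaptation that reference prescribes, so the two arguments coincide in substance.
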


\begin{proof}
	Parts (1)--(4) follow from \cref{directed_lemma}.
	Part (5) is just a restatement of the fact that each $\Pc{C}{X^*;Y}_{\tau,\rho}$ is a Banach algebra.  Part (6) is proved along the lines of \cite[Lemma 3.5]{Dries:2000mx}.
\end{proof}

Recall that $F \in \Ps{C}{X^*;Y}$ is \textbf{regular in $Y_n$ of order $d$} if $F(0,0,Y_n) = uY_n +$ terms in $Y_n$ of higher degree, with $u \in \CC$ nonzero.  Using \cref{mixed_directed_lemma}, the proof of \cite[Proposition 4.1]{Tougeron:1994fk} now establishes the following, where $Y' = (Y_1, \dots, Y_{n-1})$:

\begin{prop}\label{weierstrass}
	Let $f \in \Pc{C}{X^*;Y}_\G$, and assume that $n > 0$ and $F$ is regular
	in $Y_n$ of order $d$. Then the series F factors uniquely as $F = GH$, where
	$G \in \Pc{C}{X^*;Y}_\G$ is a unit and $H \in \Pc{C}{X^*;Y'}_\G [Y_n]$ is monic in $Y_n$ of degree $d$. \qed
\end{prop}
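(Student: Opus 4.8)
The plan is to imitate the classical Banach-algebra proof of the Weierstrass preparation theorem, as in \cite[Proposition 4.1]{Tougeron:1994fk}, carrying it out first at a fixed ``level'' $(\tau,\rho)$ and then reading off the germ statement. First I would fix $\tau \in \T_m$ and a polyradius $\rho = (\rho',\rho_n) \in (0,\infty)^n$, with $\rho' \in (0,\infty)^{n-1}$, such that $F \in \Pc{C}{X^*;Y}_{\tau,\rho}$, and set $B := \Pc{C}{X^*;Y'}_{\tau,\rho'}$; Tonelli's theorem identifies $\Pc{C}{X^*;Y}_{\tau,\rho}$ with the Banach algebra $B\{Y_n\}_{\rho_n}$ of convergent series in $Y_n$ over $B$. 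Writing $F = \sum_{j\ge 0} F_j Y_n^j$ with $F_j \in B$, regularity of $F$ in $Y_n$ of order $d$ says exactly that $\bar{F_j}(-\infty,0) = 0$ for $j<d$, while $\bar{F_d}(-\infty,0) = u \ne 0$. By \cref{mixed_directed_lemma}(2) (applied in the $Y'$-variables), $\|F_d/u - 1\|_{\tau,\rho'} < 1$ after shrinking $(\tau,\rho')$, so we may assume $F_d$ is a unit in $B$; and replacing $F$ by $F_d^{-1}F$ — which only modifies the eventual factor $G$ — we may assume $F_d = 1$.

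Next I would establish Weierstrass division in $B\{Y_n\}_{\rho_n}$: every $G \in B\{Y_n\}_{\rho_n}$ can be written uniquely as $G = qF + r$ with $q \in B\{Y_n\}_{\rho_n}$ and $r \in B[Y_n]$ of degree $<d$. With $F_d = 1$ one writes $F = V\cdot Y_n^d + W$, where $V := 1 + \sum_{j>d} F_j Y_n^{j-d} \in B\{Y_n\}_{\rho_n}$ and $W := \sum_{j<d} F_j Y_n^j \in B[Y_n]$ has degree $<d$. Shrinking $\rho_n$ makes $\|V-1\|$ arbitrarily small (so $V$ is a unit with controlled inverse), and since there are only $d$ coefficients $F_j$ with $j<d$, each with $\bar{F_j}(-\infty,0)=0$, a further shrinking of $(\tau,\rho')$ via \cref{mixed_directed_lemma}(2) makes $\|W\|$ arbitrarily small. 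Splitting $B\{Y_n\}_{\rho_n} = Y_n^d\,B\{Y_n\}_{\rho_n} \oplus B[Y_n]_{<d}$ with the associated projections, the division equation becomes a fixed-point equation for $q$ whose associated operator is a contraction once $\|V-1\|$ and $\|W\|$ are small enough; completeness of $B\{Y_n\}_{\rho_n}$ (\cref{mixed_directed_lemma}(5)) then yields existence, and the contraction property yields uniqueness. Since $B\{Y_n\}_{\rho_n} = \Pc{C}{X^*;Y}_{\tau,\rho} \subseteq \Pc{C}{X^*;Y}_\G$ and $B \subseteq \Pc{C}{X^*;Y'}_\G$, the objects produced are honest elements of the direct-limit algebras.

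Preparation then follows by dividing the monomial $Y_n^d$: write $Y_n^d = qF + r$ with $q \in B\{Y_n\}_{\rho_n}$ and $r \in B[Y_n]$ of degree $<d$. Evaluating this identity at $(-\infty,0)$ as a power series in $Y_n$ — where $\bar F(-\infty,0,Y_n)$ has lowest-order term $Y_n^d$ — and comparing the parts of degree $<d$ and of degree $d$, we get $\bar r(-\infty,0,Y_n) = 0$ and that the $Y_n$-constant coefficient $q_0$ of $q$ satisfies $\bar{q_0}(-\infty,0) = 1$; hence $\bar q(-\infty,0) \ne 0$, so $q$ is a unit in $\Pc{C}{X^*;Y}_\G$ by \cref{mixed_directed_lemma}(3). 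Setting $G := q^{-1}$ and $H := qF = Y_n^d - r$, we obtain $F = GH$ with $G$ a unit and $H = Y_n^d + \sum_{j<d}(-r_j)Y_n^j \in \Pc{C}{X^*;Y'}_\G[Y_n]$ monic of degree $d$ (the $r_j \in B \subseteq \Pc{C}{X^*;Y'}_\G$). For uniqueness, if also $F = G'H'$ with $G'$ a unit and $H' = Y_n^d + \sum_{j<d} h'_j Y_n^j$ monic, then $Y_n^d = (G')^{-1}F - \sum_{j<d} h'_j Y_n^j$ is another Weierstrass division of $Y_n^d$ by $F$ (at a common level, after shrinking), so uniqueness of division forces $(G')^{-1} = q$ and $-\sum_{j<d} h'_j Y_n^j = r$, i.e. $G' = G$ and $H' = H$.

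I expect the main obstacle to be the bookkeeping in the contraction estimate for Weierstrass division: one must shrink $\tau$, $\rho'$ and $\rho_n$ in the right order so that the perturbation operator has operator norm strictly less than $1$, and this is precisely where \cref{mixed_directed_lemma}(2) does the essential work. Once that local Banach-algebra picture is set up, the remaining steps — the passage to $\Pc{C}{X^*;Y}_\G$, the unit/monic check via the evaluation at $(-\infty,0)$, and uniqueness — are routine.
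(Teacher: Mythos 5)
Your proposal is correct and follows essentially the same route as the paper, which simply invokes the proof of Tougeron's Proposition 4.1 (the Banach-algebra Weierstrass division argument) with \cref{mixed_directed_lemma} supplying the needed completeness, unit criterion, and the shrinking estimate that makes the contraction possible. The details you spell out — identifying $\Pc{C}{X^*;Y}_{\tau,\rho}$ with $B\{Y_n\}_{\rho_n}$, making $\|V-1\|$ and $\|W\|$ small by shrinking $(\tau,\rho',\rho_n)$, and reading off preparation by dividing $Y_n^d$ — are precisely what is implicit in the paper's one-line appeal to Tougeron.
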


\section{Substitutions}  \label{substitution}

In this section, we introduce the substitutions discussed in Sections 1.8 and 1.15 of \cite{MR3349791}.
Let $m',n' \in \NN$ and $X' = (X'_1, \dots, X'_{m'})$ and $Y' = (Y'_1, \dots, Y'_{n'})$ be indeterminates. Denote by $\{X,Y\}$ the set $\left\{ X_{1},\ldots,X_{m},Y_{1},\ldots,Y_{n}\right\}$ and let $\sigma :
\{X,Y\} \into \Ps{R}{(X')^*,Y'}$ be a map.  We call $\sigma$ a \textbf{substitution} if each $\sigma(X_i)$ is normal in the following sense:
\begin{itemize}
	\item[($\ast$)] there exist $a_i \in [0,\infty)$, nonzero $\gamma_i \in [0,\infty)^{m'}$, $\lambda_i \in (0,\infty)$ and $H_i \in \Ps{R}{(X')^*,Y'}$ such that $H_i(0) = 0$ and $\sigma(X_i) = a_i +  (X')^{\gamma_i}(\lambda_{i} + H_i((X')^*,Y'))$.  
\end{itemize}
If $\sigma$ is a substitution such that $\sigma(0) = 0$ (in particular, $a_i = 0$ for each $i$), then $\sigma$ extends to a unique $\CC$-algebra homomorphism $\sigma:\Ps{C}{X^*, Y}
\into \Ps{C}{(X')^*,Y'}$ by using, for each $r>0$ and $\epsilon \in \Ps{C}{(X')^*,Y'}$ with $\epsilon(0) = 0$,  the binomial theorem $$(\lambda + \epsilon)^r = \sum_{i \in \NN} \binom{r}{i} \lambda^{r-i} \epsilon^i.$$
In this situation, we write $\sigma F$ in place of $\sigma(F)$ for $F \in
\Ps{C}{X^*, Y}$.
If all $\sigma(X_i)$ and $\sigma(Y_j)$ lie in a subring $A$ of
$\Ps{C}{(X')^*,Y'}$, then we refer to $\sigma$ also as a substitution $\sigma
: \{X,Y\} \into A$.   Note that, in this situation, we have $\sigma(\Ps{R}{X^*,Y}) \subseteq \Ps{R}{(X')^*,Y'}$.

\begin{rmk}
	While general substitutions do not extend to all of $\Ps{C}{X^*,Y}$, we describe particular substitutions below that do extend to certain subalgebras of $\Pc{C}{X^*,Y}_\G$.
\end{rmk}

Let $\sigma:\{X,Y\} \into \Pc{R}{(X')^*,Y'}_{\G}$ be a substitution.  For each $\sigma(X_i)$, we let $\gamma_i$, $\lambda_i$ and $H_i$ be as in ($\ast$); by \cref{mixed_directed_lemma}, we have $H_i \in \Pc{R}{(X')^*,Y'}_{\G}$ as well.  We call $\tau' \in \T_\mu$ and $\rho' \in (0,\infty)^\nu$ \textbf{$\sigma$-admissible} if each $\sigma(X_i)$ and $\sigma(Y_j)$, as well as each $H_i$, belongs to $\Pc{R}{(X')^*,Y'}_{\tau',\rho'}$, and we have $\|H_i\|_{\tau',\rho'} < |\lambda_i|$ for each $i$.

Let $\tau' \in \T_\mu$ and $\rho' \in (0,\infty)^\nu$ be \textbf{$\sigma$-admissible}.  Then $\sigma$ induces a log-holomorphic map $\tilde\sigma: S^{\tau'} \times D(\rho') \into \bar\CC^{m+n}$ by setting
\begin{equation*}
\tilde\sigma_i(w',y'):= \gamma_i \cdot w' + \log\left(\lambda_i + \bar{H_i}(w',y')\right)
\end{equation*}
for $i = 1, \dots, m$ (where $\log$ denotes the standard branch of the logarithm), and 
\begin{equation*}
\tilde\sigma_j:= \bar{\sigma(Y_j)}
\end{equation*}
for $j=1, \dots, n$.  (Note that, for each $i$, we have $\exp \circ\ \tilde\sigma_i = \bar{\sigma(X_i)}$.)

\begin{expls} \label{subst_expls}
	\begin{enumerate}
		\item \textbf{(Permutation of Gevrey variables)}  For a permutation $\pi \in \Sigma_m$, $\mu = m$ and $\nu = n$, the substitution defined by $\sigma(X_i):= X'_{\pi(i)}$ and $\sigma(Y_j):= Y_j$.
		\item \textbf{(Blow-up chart in the Gevrey variables)}  $\sigma$ is any of the blow-up charts (1) or (2) found in \cite[Definition 1.13]{MR3349791}, also referred to as \textbf{regular} blow-up charts and \textbf{singular} blow-up charts, respectively.
		\item \textbf{(Ramification of a Gevrey variable)}  Here $m'=m$ and $n'=n$, and we have $\sigma(X_{i_0}) = (X'_{i_0})^\alpha$ for some $\alpha \ge 0$ and $i_0 \in \{1, \dots, m\}$, $\sigma(X_i) = X'_i$ for each $i \ne i_0$, and $\sigma(Y_j) = Y'_j$ for each $j$.
		\item \textbf{(Translation)}  For $(a,b)\in (0,\infty)^m \times \RR^n$, put $m':=|\{i:\ 1\le i\le m,\ a_i =0\}|$ and $n':= n+m-m'$, and choose a permutation $\pi \in \Sigma_m$ such that $\pi(\{i:\ 1 \le i \le m,\ a_i = 0\}) = \{1,...,m'\}$.  Then the \textbf{translation by $(a,b)$} is the substitution defined by $\sigma(X_{\pi(i)}):= X'_i$ for $i = 1, \dots, m'$, $\sigma(X_{\pi(m'+j)}):= a_{\pi(m'+j)} + Y'_j$ for $j = 1, \dots, m-m'$, and $\sigma(Y_j):= b_j + Y'_{m-m'+j}$ for $j=1, \dots, n$.  
		\item \textbf{(Infinitesimal substitution in the convergent variables)} Here $n>0$, $\sigma(0) = 0$, $\sigma(X_i) = X_i$ for each $i$ and $\sigma(Y_j) \in \Pc{R}{(X')^*,Y'}_\G$ for each $j$, where $X' = (X'_1, \dots, X'_{m'})$ and $Y' = (Y'_1, \dots, Y'_{n'})$ with $m'$ and $n'$ arbitrary.  
	\end{enumerate}\medskip

	Note that for each of these substitutions $\sigma$, every corresponding $\tau'$ and $\rho'$ is $\sigma$-admissible.  Also, while permutations and blow-up charts extend to all of $\Ps{C}{X^*,Y}$, translations do not in general do so.
\end{expls}

We start with an elementary lemma similar to \cite[Lemma 4.3]{Dries:2000mx}.  The essential difference between the proofs here and those in \cite[Section 4]{Dries:2000mx} is that we cannot use Taylor expansion to compute the series after substitution as in \cite[Lemma 4.2]{Dries:2000mx}; instead, we have to rely on our additional assumptions built into the norms $\|\cdot\|_\tau$.

\begin{lemma} \label{one_var_for_another}
	Let $X':= (X_1, \dots, X_{m-1})$, and let $\sigma:\{X,Y\} \into \RR[X',Y]$ be the substitution given by $\sigma(X_i) := X_i$ if $i < m$, $\sigma(X_m):= X_{m-1}$ and $\sigma(Y_j):= Y_j$ for each $j$.  Then for every $F \in \Pc{C}{X^*,Y}_\G$, we have $\sigma F \in \Pc{C}{(X')^*, Y}_\G$ and $\bar{\sigma F} = \bar F \circ \tilde\sigma$ (as germs of functions).
\end{lemma}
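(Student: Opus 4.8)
The plan is to reduce to the case $n=0$ (no convergent variables) and a single generalized power series in the Gevrey variables, and then to transport a representation $G=_\tau\sum_p f_p$ through the ``diagonalization'' map $\delta\colon\bar\CC^{m-1}\into\bar\CC^m$, $\delta(w'):=(w'_1,\dots,w'_{m-1},w'_{m-1})$. For the substitution $\sigma$ of the statement one has $\gamma_i=e_i$, $\lambda_i=1$, $H_i=0$ when $i<m$, and $\gamma_m=e_{m-1}$, $\lambda_m=1$, $H_m=0$, and $\sigma(Y_j)=Y_j$, so the associated log-holomorphic map is $\tilde\sigma(w',y)=(\delta(w'),y)$. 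Fix $\tau=(K,R,r,\theta,\Delta)\in\T_m$. For $k\in[0,\infty)^m$ and $A\subseteq[0,\infty)^m$ write $k^\flat:=(k_1,\dots,k_{m-2},k_{m-1}+k_m)$ and $A^\flat:=\{(\alpha_1,\dots,\alpha_{m-2},\alpha_{m-1}+\alpha_m):\alpha\in A\}$, and let $R^\flat:=(R_1,\dots,R_{m-2},\min\{R_{m-1},R_m\})$. A routine check shows that $A^\flat$ is natural when $A$ is (any bounded initial segment of $A^\flat$ is the image under addition of a finite subset of $A$), and that the formal substitution is given by $\sigma\bigl(\sum_\alpha a_\alpha X^\alpha\bigr)=\sum_{\alpha'}\bigl(\sum_{\alpha^\flat=\alpha'}a_\alpha\bigr)(X')^{\alpha'}$, with each inner sum finite when the support is natural. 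Set $\tau':=(K^\flat,R^\flat,r,\theta,\Delta^\flat)\in\T_{m-1}$ (replacing $\Delta^\flat$ by its closure under addition if necessary).

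The proof rests on two compatibility facts. First, because the logarithmic polysectors and polydisks of \cref{poly_section} are defined through the scalar products $k\cdot\re w$ and $k\cdot\|\im w\|$, one computes directly that $k\cdot\re\delta(w')=k^\flat\cdot\re w'$ and $k\cdot\|\im\delta(w')\|=k^\flat\cdot\|\im w'\|$; hence $\delta$ carries $S^{k^\flat}(\log R^\flat,\theta)$ into $S^k(\log R,\theta)$, $H^{k^\flat}_p(\log R^\flat)$ into $H^k_p(\log R)$, and $S^{k^\flat}_p(\log R^\flat,\theta)$ into $S^k_p(\log R,\theta)$ for every $k\in K$, so that $\delta(S^{\tau'})\subseteq S^\tau$, $\delta(S^{\tau'}_p)\subseteq S^\tau_p$ and $\delta\bigl(\bigcap_k H^{k^\flat}_p(\log R^\flat)\bigr)\subseteq\bigcap_k H^k_p(\log R)$; the coordinatewise minimum in $R^\flat$ is exactly what is needed to keep $\delta(w')$ inside $H(\log R)$. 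Second, for a convergent generalized power series $P$ with natural support, regrouping the series $\bar P(w)=\sum_\alpha a_\alpha e^{\alpha\cdot w}$ — which is absolutely convergent for $\re w$ in a suitable log-disk — and then continuing analytically gives $\bar{\sigma P}=\bar P\circ\delta$; the same regrouping, together with the estimate $\|\sigma P\|_{s'}\le\|P\|_{(s'_1,\dots,s'_{m-1},s'_{m-1})}$ and the fact that $(s'_1,\dots,s'_{m-1},s'_{m-1})\in\cl(D^k_p(R))$ whenever $s'\in\cl(D^{k^\flat}_p(R^\flat))$, yields $\|\sigma P\|_{R^\flat,k^\flat,p}\le\|P\|_{R,k,p}$.

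For $G\in\G_\tau$ I would then argue as follows. Choose $G=_\tau\sum_p f_p$ with $F_p:=Tf_p$, and put $\hat f_p:=f_p\circ\delta\colon S^{\tau'}_p\into\CC$; this is bounded, with $\|\hat f_p\|_{S^{\tau'}_p}\le\|f_p\|_{S^\tau_p}$ by the first fact, and log-holomorphic on $S^{\tau'}_p$, being the restriction to $S^{\tau'}_p\subseteq\delta^{-1}(D_p)$ of $g_p\circ\delta$ for a log-holomorphic extension $g_p$ of $f_p$ (the required log-domain is extracted using the connectedness of $S^{\tau'}_p$). On $\bigcap_k H^{k^\flat}_p(\log R^\flat)$ the two facts give $\hat f_p=\bar{F_p}\circ\delta=\bar{\sigma F_p}$, so $T\hat f_p=\sigma F_p$ with $\|\sigma F_p\|_{R^\flat,k^\flat,p}\le\|F_p\|_{R,k,p}$. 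Therefore $\sum_p\|\sigma F_p\|_{R^\flat,k^\flat,p}\,r^p<\infty$ and $\sum_p\|\hat f_p\|_{S^{\tau'}_p}\,r^p<\infty$, so $\sum_p\hat f_p$ converges in the sense of the definition of $\G_{\tau'}$ to some $h\in\G_{\tau'}$ with $Th=\sum_p\sigma F_p=\sigma(TG)$; thus $\sigma(TG)\in\Pc{C}{(X')^*}_{\tau'}\subseteq\Pc{C}{(X')^*}_\G$, and by uniform convergence on $S^\tau$ we get $\bar{\sigma G}=h=\sum_p f_p\circ\delta=\bigl(\sum_p f_p\bigr)\circ\delta=\bar G\circ\delta$. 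Finally, for $F=\sum_\beta F_\beta(X)Y^\beta\in\Pc{C}{X^*,Y}_{\tau,\rho}$ the substitution acts coefficientwise; applying the case $n=0$ to each $F_\beta$ gives $\sigma F_\beta\in\Pc{C}{(X')^*}_{\tau'}$ with $\|\sigma F_\beta\|_{\tau'}\le\|F_\beta\|_\tau$, hence $\|\sigma F\|_{\tau',\rho}\le\|F\|_{\tau,\rho}<\infty$, so $\sigma F\in\Pc{C}{(X')^*,Y}_{\tau',\rho}\subseteq\Pc{C}{(X')^*,Y}_\G$, and $\bar{\sigma F}(w',y)=\sum_\beta\bar{\sigma F_\beta}(w')y^\beta=\sum_\beta(\bar{F_\beta}\circ\delta)(w')y^\beta=\bar F(\tilde\sigma(w',y))$.

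I expect the main obstacle to be the first compatibility fact — checking that $\delta$ genuinely carries $S^{\tau'}_p$ into $S^\tau_p$ (and the relevant log-disks into one another): this is precisely the point at which the scalar-product definition of the logarithmic polydomains in \cref{poly_section} (as opposed to a Cartesian-product definition, under which the inclusions would fail; compare \cref{multi-dom_rmk}) pays off, and it also dictates both the contraction $k\mapsto k^\flat$ and the coordinatewise minimum in $R^\flat$. A secondary but genuine point is the bookkeeping showing that $\hat f_p$, $\bar{F_p}\circ\delta$ and $\bar{\sigma F_p}$ all coincide on the appropriate log-disk around $-\infty$, which requires the regrouping of $\bar{F_p}$ to be legitimate there; this, and the well-definedness and estimation of the coefficients of $\sigma F$, is exactly where the natural-support hypothesis on $F$ is used.
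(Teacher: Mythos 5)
Your proof is correct and follows essentially the same route as the paper: the same choice of $\tau'=(K^\flat,R^\flat,r,\theta,\Delta^\flat)$, the domain inclusions for $\delta=\tilde\sigma$, the transport of a representation $\sum_p f_p$ through $\delta$ with the norm estimate $\|\sigma F_p\|_{R^\flat,k^\flat,p}\le\|F_p\|_{R,k,p}$ obtained by regrouping, and the coefficientwise reduction to $n=0$. The only difference is that the paper cites the proof of \cite[Lemma 4.3]{Dries:2000mx} for the domain inclusions, whereas you verify them directly from the scalar-product identities $k\cdot\re\delta(w')=k^\flat\cdot\re w'$ and $k\cdot\|\im\delta(w')\|=k^\flat\cdot\|\im w'\|$ (and you correctly note that in general $\|\sigma P\|_{s'}\le\|P\|_{(s',s'_{m-1})}$ rather than equality, which is all the argument needs).
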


\begin{proof}
	Fix $\tau = (K,R,r,\theta,\Delta) \in \T_m$ and $\rho \in (0,\infty)^n$.
	Similar to \cite[Lemma 4.3]{Dries:2000mx}, we set $$K':= \set{(k_1, \dots, k_{m-2}, k_{m-1} + k_m):\ k \in K} \quad\text{and}\quad R':= (R_1, \dots, R_{m-2}, \min\{R_{m-1}, R_m\}).$$  Moreover, we set $$\Delta':= \set{(\alpha_1, \dots, \alpha_{m-2}, \alpha_{m-1} + \alpha_m):\ \alpha \in \Delta};$$ since the projection of $\Delta'$ on each of the first $m-2$ coordinates is the same as of $\Delta$, and since $\Pi_{m-1}(\Delta') \subseteq \Pi_{m-1}(\Delta) + \Pi_m(\Delta)$, this $\Delta'$ is natural.  So we set $\tau':= (K',R',r,\theta,\Delta') \in \T_{m-1}$; we get from the proof of \cite[Lemma 4.3]{Dries:2000mx} that $\tilde\sigma\left(S^{\tau'} \times D(\rho')\right) \subseteq S^\tau \times D(\rho)$ and, for each $p$, that $\tilde\sigma\left(S^{\tau'}_p \times D(\rho')\right) \subseteq S^\tau_p \times D(\rho)$.  The lemma then follows from the following more precise statement: \medskip
	
	\noindent\textbf{Claim.} \textsl{For $F \in \Pc{C}{X^*,Y}_{\tau,\rho}$, we have $\sigma F \in \Pc{C}{(X')^*,Y}_{\tau',\rho}$ such that $\|\sigma F\|_{\tau'} \le \|F\|_\tau$ and $\bar{\sigma F} = \bar F \circ \tilde\sigma$}.  \medskip
	
	To prove the claim, let $F \in \Pc{C}{X^*,Y}_{\tau,\rho}$; we distinguish two cases.\medskip
	
	\textbf{Case 1:} $n=0$.  Choose convergent $F_p \in \Pc{C}{X^*,Y}$ such that $\bar F =_\tau \sum_p \bar{F_p}$; as in Case 1 of the proof of \cite[Lemma 4.3]{Dries:2000mx}, it follows that $\sum_p \left\|\bar{F_p}\right\|_{S^{\tau'}_p} r^p \le \|F\|_\tau$ and $\bar{\sigma F} = \bar F \circ \tilde \sigma$.  Also, let $k'$ be defined for $K'$ as $k$ is defined for $K$; it remains to show that $\sum_p \|\sigma F_p \|_{R',k',p}\ r^p < \infty$.  Let $s \in D^{k'}_p\left(e^{R'}\right)$; then we have $\|\sigma F_p\|_s = \|F_p\|_{\sigma(s)}$, for each $p \in \NN$, by the definition of these norms.  Since $\sigma(s) \in D^k_p\left(e^R\right)$ by the above, we obtain $$\sum_p \|\sigma F_p\|_s\ r^p = \sum_p \|F_p\|_{\sigma(s)}\ r^p \le \sum_p \|F_p\|_{R,k,p}\ r^p.$$ Since $s \in D^{k'}_p\left(e^{R'}\right)$ was arbitrary, it follows that $\sum_p \|\sigma F_p \|_{R',k',p}\ r^p  \le \sum_p \|F_p\|_{R,k,p}\ r^p$, so that $\sigma F \in \Pc{C}{(X')^*,Y}_{\tau',\rho}$.   Moreover, since this argument works for all sequences of convergent $F_p \in \Pc{C}{X^*,Y}$ such that $\bar F =_\tau \sum_p \bar{F_p}$, we also get $\|\sigma F\|_{\tau'} \le \|F\|_\tau$ in this case. \medskip
	
	\textbf{Case 2:} $n>0$; this case literally follows the proof of Case 2 of \cite[Lemma 4.3]{Dries:2000mx}, which we reproduce here for the convenience of the reader.  We let $F = \sum F_{\beta}(X) Y^{\beta}$ with each $F_{\beta} \in
	\Pc{C}{X^*}_{\tau}$.
	By Case 1 each $\sigma F_{\beta}$ belongs to
	$\Pc{C}{(X')^*}_{\tau'}$ with $\norm{\sigma F_{\beta}}_{\tau'}
	\leq \norm{F_{\beta}}_{\tau}$.
	Hence $\sigma F$ belongs to $\Pc{C}{(X')^*;Y}_{\tau',\rho}$ and
	satisfies $\norm{\sigma F}_{\tau',\rho} \leq
	\norm{F}_{\tau,\rho}$, and it remains to show that $\bar{\sigma
	F} = \bar F \circ
	\tilde\sigma$.
	For each $d \in \NN$ we put
	$$F_d(X, Y):= \sum_{\Sigma\beta \leq d} F_{\beta}(X) Y^{\beta} \in
	\Pc{C}{X^*;Y}_{\tau,\rho}.$$
	By the same argument as before each $\sigma F_d$ belongs to
	$\Pc{C}{(X')^*, Y}_{\tau',\rho}$, and since each $F_d$ has finite
	support (as a series in $Y$), we get $\bar{\sigma F_d} =
	\bar{F_d} \circ \tilde\sigma$.
	Clearly $\lim_{d \to \infty} \bar{F_d}(w,y) = \bar{F}(w,y)$ for all $(w,y) \in S^{\tau} \times D(\rho)$.
	Moreover, since $\sigma:\Ps{C}{X;Y} \into \Ps{C}{X', Y}$ is a
	homomorphism, we have
	$$\norm{\bar{\sigma F} - \bar{\sigma F_d}}_{\tau',\rho} = \norm{\bar{\sigma(F - F_d)}
	}_{\tau',\rho} = \sum_{\Sigma\beta > d}
	\norm{\bar{\sigma F_{\beta}}}_{\tau'} \rho^{\beta}
	\leq \sum_{\Sigma\beta > d}
	\norm{F_{\beta}}_{\tau} \rho^{\beta},$$
	so that $\lim_{d \to \infty} \bar{\sigma F_d}(w',y) = \bar{\sigma F}(w',y)$ for
	all $(w',y) \in S^{\tau'} \times D(\rho)$.
	Hence
	$$\bar{\sigma F}(w',y)
	= \lim_{d \to \infty} \bar{\sigma F_d} (w',y)
	= \lim_{d \to \infty} \big(\bar{F_d} \circ
	\tilde\sigma\big) (w',y)
	= \big(\bar{F} \circ \tilde\sigma\big)(w',y)$$
	for all $(w',y) \in S^{\tau'} \times D(\rho)$, which finishes the proof.
\end{proof}

We now proceed to proving closure under each of the substitutions in \cref{subst_expls}.

\subsection*{Permutations}
Let $\sigma$ be a permutation of $\{1, \dots, m\}$ and, for $x \in \bar\CC^m$, we denote by $\sigma(x)$ the tuple $ \left(x_{\sigma(1)}, \dots, x_{\sigma(m)}\right)$.
For $\tau \in \T_m$, we set $$\sigma(\tau):= \left(K,\sigma(R),r,\theta,\sigma(\Delta)\right);$$ note that $\sigma:\T_m \into \T_m$ is a bijection.

\begin{lemma} \label{permutation_lemma}
	For $\tau \in \T_m$ and $\rho \in (0,\infty)^n$, we have $F \in \Pc{C}{X^*,Y}_{\tau,\rho}$ if and only if $\sigma F \in \Pc{C}{X^*,Y}_{\sigma^{-1}(\tau),\rho}$, and for such $F$, we have $\bar{\sigma F} = \bar F \circ \tilde\sigma$ and $\|\sigma F\|_{\sigma^{-1}(\tau),\rho} = \|F\|_{\tau,\rho}$.  
\end{lemma}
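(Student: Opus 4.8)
The plan is to treat \cref{permutation_lemma} as a pure relabeling statement: the substitution attached to $\sigma$ acts on a generalized power series by permuting exponent vectors, and the induced map $\tilde\sigma$ is the coordinate permutation of the Gevrey variables extended by the identity on the convergent variables. First I would record this. The permutation substitution has the trivial normal form ($a_i=0$, $\gamma_i$ the $\sigma(i)$-th standard unit vector, $\lambda_i=1$, $H_i=0$), so plugging it into the definition of $\tilde\sigma$ gives $\tilde\sigma_i(w',y')=w'_{\sigma(i)}$, i.e.\ $\tilde\sigma(w',y')=(\sigma(w'),y')$, and every $\tau'\in\T_m$, $\rho'\in(0,\infty)^n$ is $\sigma$-admissible (as noted in \cref{subst_expls}). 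Next, for $F=\sum_\alpha a_\alpha X^\alpha\in\Pc{C}{X^*}$ one has $\sigma F=\sum_\alpha a_\alpha(X')^{\sigma^{-1}(\alpha)}$, whence $\supp(\sigma F)=\sigma^{-1}(\supp F)$ and $\|\sigma F\|_s=\|F\|_{\sigma(s)}$ for every polyradius $s$; the same computation on a monomial gives $\bar{\sigma G}=\bar G\circ\tilde\sigma$ for convergent $G\in\Pc{C}{X^*,Y}$. Since permutations preserve naturality and closure under addition, $\sigma$ thus gives a bijective, exponent-permuting correspondence between convergent series with support in $\Delta$ and convergent series with support in $\sigma^{-1}(\Delta)$.

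Second, I would check that the coordinate permutation carries the relevant domains onto one another, where $\sigma$ is understood to act coordinatewise on each of $R$, $K$ and $\Delta$ (the parameters $r,\theta$ being untouched), so that $\sigma^{-1}(\tau)=(\sigma^{-1}(K),\sigma^{-1}(R),r,\theta,\sigma^{-1}(\Delta))$. Clearly $\sigma$ sends $H(\log R)$ to $H(\log\sigma(R))$; moreover $\|\sigma(z)\|^{\sigma(k)}=\|z\|^k$ and $\sigma(R)^{\sigma(k)}=R^k$, so $\sigma$ sends $D^k_p(R)$ to $D^{\sigma(k)}_p(\sigma(R))$ and, similarly, $S^k(\log R,\theta)$ and $S^k_p(\log R,\theta)$ to $S^{\sigma(k)}(\log\sigma(R),\theta)$ and $S^{\sigma(k)}_p(\log\sigma(R),\theta)$. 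Intersecting over $k\in K$ yields $\sigma(S^{\sigma^{-1}(\tau)})=S^\tau$ and $\sigma(S^{\sigma^{-1}(\tau)}_p)=S^\tau_p$ (in particular $\tilde\sigma(S^{\sigma^{-1}(\tau)}_p\times D(\rho))\subseteq S^\tau_p\times D(\rho)$), and combined with $\|\sigma F\|_s=\|F\|_{\sigma(s)}$ it gives $\|\sigma F\|_{\sigma^{-1}(R),\sigma^{-1}(k),p}=\|F\|_{R,k,p}$ for convergent $F$ and $\|g\circ\tilde\sigma\|_{S^{\sigma^{-1}(\tau)}_p}=\|g\|_{S^\tau_p}$ for log-holomorphic $g$ on $S^\tau_p$. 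The one spot worth a second glance is that $D^k_p(R)$ is defined through the scalar product $\|z\|^k$ rather than a Cartesian product; this is precisely what makes $\sigma(D^k_p(R))=D^{\sigma(k)}_p(\sigma(R))$ work, and is where the ``scalar product rather than Cartesian product'' design of the log-$k$-polydisks interacts with permutations.

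Third, I would assemble these facts along the template of \cref{one_var_for_another}. Reduce first to $n=0$: given $f=\bar F\in\Pc{C}{X^*}_\tau$, choose convergent $F_p$ with $f=_\tau\sum_p\bar{F_p}$ and put $f'_p:=\bar{F_p}\circ\tilde\sigma$ on $S^{\sigma^{-1}(\tau)}_p$. By the previous two paragraphs each $f'_p$ is log-holomorphic, agrees with $\bar{\sigma F_p}$ on the appropriate log-polydisk, and satisfies $\|\sigma F_p\|_{\sigma^{-1}(R),\sigma^{-1}(k),p}=\|F_p\|_{R,k,p}$ and $\|f'_p\|_{S^{\sigma^{-1}(\tau)}_p}=\|f_p\|_{S^\tau_p}$; hence both defining series converge, $\bar{\sigma F}:=\sum_p f'_p$ lies in $\G_{\sigma^{-1}(\tau)}$ with $T(\bar{\sigma F})=\sum_p\sigma F_p=\sigma F$, so $\sigma F\in\Pc{C}{X^*}_{\sigma^{-1}(\tau)}$, and $\bar{\sigma F}=\bar F\circ\tilde\sigma$ because $\bar F=\sum_p\bar{F_p}$ uniformly on $S^\tau$ and $\tilde\sigma$ is continuous. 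As $\{f_p\}\mapsto\{f'_p\}$ is a bijection between the admissible representations of $\bar F$ and of $\bar{\sigma F}$ that preserves both sums in the definition of the norm, the infima coincide and $\|\sigma F\|_{\sigma^{-1}(\tau)}=\|F\|_\tau$. Passing to mixed series in $Y$ is then word for word Case 2 of the proof of \cref{one_var_for_another} (the polyradius $\rho$ is untouched), giving $\sigma F\in\Pc{C}{X^*,Y}_{\sigma^{-1}(\tau),\rho}$ with $\bar{\sigma F}=\bar F\circ\tilde\sigma$ and $\|\sigma F\|_{\sigma^{-1}(\tau),\rho}=\|F\|_{\tau,\rho}$. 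The reverse implication follows by applying what has just been shown to $\sigma^{-1}$ in place of $\sigma$, since the associated substitution inverts $\sigma$ on $\Pc{C}{X^*,Y}_\G$ and $\sigma(\sigma^{-1}(\tau))=\tau$. There is no analytic obstacle here; the only real work is keeping $\sigma$ and $\sigma^{-1}$ consistent across exponent vectors, polyradii, the index set $K$, and the polysectors and polydisks.
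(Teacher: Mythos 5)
Your proof is correct and follows the strategy the paper intends: the paper's own proof is just a pointer to \cref{one_var_for_another} (``follows the general strategy\ldots but is easier and left to the reader''), and your three paragraphs spell out exactly that argument—identify $\tilde\sigma$ as the coordinate permutation, check that it carries the polysectors and polydisks onto each other, and push a representation $f=_\tau\sum_p f_p$ through $\tilde\sigma$, with the $n>0$ case handled by the term-by-term Taylor argument from Case~2 of \cref{one_var_for_another}. One point worth flagging: you take $\sigma^{-1}(\tau)=(\sigma^{-1}(K),\sigma^{-1}(R),r,\theta,\sigma^{-1}(\Delta))$, with $K$ permuted along with $R$ and $\Delta$, whereas the paper's displayed definition of $\sigma(\tau)$ leaves $K$ unchanged. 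Your version is the one actually needed: the identity $\tilde\sigma\bigl(S^{\sigma^{-1}(\tau)}_p\bigr)=S^\tau_p$, on which both your argument and the cited proof rely, translates the constraint $k\cdot\|\im w\|<\theta$ defining $S^\tau$ into $\sigma^{-1}(k)\cdot\|\im w'\|<\theta$ on the source, and this forces $K$ to be replaced by $\sigma^{-1}(K)$; leaving $K$ fixed would make $\tilde\sigma$ fail to map $S^{\sigma^{-1}(\tau)}$ into $S^\tau$ whenever $\sigma(K)\neq K$. So treat the paper's $\sigma(\tau):=(K,\sigma(R),r,\theta,\sigma(\Delta))$ as a typo for $(\sigma(K),\sigma(R),r,\theta,\sigma(\Delta))$, which is what your proof correctly uses.
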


\begin{proof}
	The proof follows the general strategy of the proof \cref{one_var_for_another}, but is easier and left to the reader.
\end{proof}

\subsection*{Blow-up charts}
	We let $1 \le j < i \le m$ and $\lambda > 0$ and consider the regular blow-up chart $\pi^\lambda_{i,j}$; the singular blow-up charts are handled similarly, but are actually easier to deal with and are left to the reader.  Permuting the Gevrey variables if necessary, we assume that $j = m-1$ and $i = m$; to simplify notation, we write $\sigma = \pi^\lambda_{m,m-1}$. 
	
	Fix $\tau \in \T_m$ and $\rho \in (0,\infty)^n$.  Similar to \cite[Lemma 4.7]{Dries:2000mx}, we now choose $\theta' \in (\pi/2,\theta)$ and $\rho_0 > 0$ such that $k_m |\arg(\lambda + v)| < \theta - \theta'$ for all $v \in D(2\rho_0)$ and all $k \in K$.  We set $k':= (k_1, \dots, k_{m-2}, k_{m-1}+k_m)$ for $k \in K$, $l:= \max\{k_m:\ k \in K\}$ and $R_{m-1}':= \min\set{R_{m-1}, R_m, \frac{R_m}{(\lambda + 2\rho_0)^l}}$, as well as $$K':= \set{k':\ k \in K} \quad\text{and}\quad R':= (R_1, \dots, R_{m-2}, R_{m-1}'),$$ $\Delta':= \Pi_{m-1}(\Delta)$ and, finally, $\tau':= (K',R',r,\theta',\Delta')$ and $\rho':= (\rho_0,\rho)$.  By Claims 1 and 2 in the proof of \cite[Lemma 4.7]{Dries:2000mx}, we have $\tilde\sigma\left(S^{\tau'} \times D(\rho')\right) \subseteq S^\tau \times D(\rho)$ and, for each $p$, that $\tilde\sigma\left(S^{\tau'}_p \times D(\rho')\right) \subseteq S^\tau_p \times D(\rho)$. 
	
\begin{prop}\label{blow-up_prop}
	For $F \in \Pc{C}{X^*,Y}_{\tau,\rho}$, we have $\sigma F \in \Pc{C}{(X')^*,Y'}_{\tau',\rho'}$ such that $\bar{\sigma F} = \bar{F} \circ \tilde\sigma$ and $\|\sigma F\|_{\tau',\rho'} \le C \|F\|_{\tau,\rho}$, where $C \ge 1$ is independent of $F$.
\end{prop}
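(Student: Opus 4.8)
The plan is to mimic the proof of \cite[Lemma 4.7]{Dries:2000mx} (and of \cref{one_var_for_another}), using the geometric inclusions $\tilde\sigma\left(S^{\tau'}\times D(\rho')\right)\subseteq S^\tau\times D(\rho)$ and $\tilde\sigma\left(S^{\tau'}_p\times D(\rho')\right)\subseteq S^\tau_p\times D(\rho)$ recorded above, together with two ingredients that the real exponents force on us. The first is the elementary bound
\[
\sum_{i\in\NN}\left|\binom{\alpha_m}{i}\right|\lambda^{\alpha_m-i}\varrho^{i}\ \le\ \left(\frac{\lambda}{1-\varrho/\lambda}\right)^{\alpha_m}\qquad(0<\varrho<\lambda,\ \alpha_m\ge0),
\]
which comes from $\left|\binom{r}{i}\right|\le(-1)^i\binom{-r}{i}$ for $r\ge0$; after shrinking $\rho_0$, I may fix an auxiliary radius $\varrho\in(\rho_0,2\rho_0)$ for which the base $\lambda/(1-\varrho/\lambda)$ does not exceed $\lambda+2\rho_0$. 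The second is the matching transformation rule for polyradii: if $s'\in\cl\left(D^{k'}_p(R')\right)$, then $\hat s(s'):=\bigl(s'_1,\dots,s'_{m-2},s'_{m-1},(\lambda+2\rho_0)s'_{m-1}\bigr)$ lies in $\cl\left(D^k_p(R)\right)$; this is the power-series-side counterpart of Claims~1 and 2 in \cite[Lemma 4.7]{Dries:2000mx}, proved by the same computation from the defining choices of $K'$, $R'_{m-1}$ and $\rho_0$ (this is what the exponent $l=\max\{k_m:k\in K\}$ in $R'_{m-1}$ is engineered for).

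First I would reduce to the case $n=0$. Writing $F=\sum_\beta F_\beta(X)Y^\beta$ with $F_\beta\in\Pc{C}{X^*}_\tau$, the blow-up carries the old convergent indeterminates to $Y'_2,\dots,Y'_{n+1}$ with unchanged polyradii, so $\sigma F=\sum_\beta(\sigma F_\beta)\cdot(Y'_2,\dots,Y'_{n+1})^\beta$; applying the $n=0$ case to each $F_\beta$ and summing gives the membership and the norm inequality, while $\bar{\sigma F}=\bar F\circ\tilde\sigma$ follows by truncating $F$ in $Y$ and passing to the limit, exactly as in Case~2 of \cref{one_var_for_another}. So assume $F\in\Pc{C}{X^*}_\tau$; then $\sigma F=\sum_{i\in\NN}(\sigma F)_i(X')(Y'_1)^i$, and for each $i$ I must exhibit a decomposition of $\bar{(\sigma F)_i}$ in $\G_{\tau'}$ whose two constituent sums, weighted by $\rho_0^i$ and summed over $i$, remain bounded by a multiple of $\|F\|_\tau$. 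Fix a decomposition $\bar F=_\tau\sum_p\bar{F_p}$ with convergent $F_p$ supported in $\Delta$ (so $\sigma F_p$ is supported in $\Delta'$ in the $X'$-variables) and with $\sum_p\|F_p\|_{R,k,p}r^p$ and $\sum_p\|\bar{F_p}\|_{S^\tau_p}r^p$ both $\le2\|F\|_\tau$. Expanding $\sigma(X_m)^{\alpha_m}=\bigl(X'_{m-1}(\lambda+Y'_1)\bigr)^{\alpha_m}$ by the binomial theorem yields $(\sigma F_p)_i=\sum_\alpha a_{p,\alpha}\binom{\alpha_m}{i}\lambda^{\alpha_m-i}(X')^{\alpha'}$ with $\alpha'=(\alpha_1,\dots,\alpha_{m-2},\alpha_{m-1}+\alpha_m)$.

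The core estimate then runs as follows. For each $p$ and each $s'\in\cl\left(D^{k'}_p(R')\right)$, the first ingredient gives $\sum_i\varrho^{i}\|(\sigma F_p)_i\|_{s'}\le\sum_\alpha|a_{p,\alpha}|\,\hat s(s')^{\alpha}=\|F_p\|_{\hat s(s')}\le\|F_p\|_{R,k,p}$ by the second; since the left-hand side is a power series in $\varrho$ with non-negative coefficients, each coefficient satisfies $\|(\sigma F_p)_i\|_{s'}\le\varrho^{-i}\|F_p\|_{R,k,p}$, and taking the supremum over $s'$ gives $\|(\sigma F_p)_i\|_{R',k',p}\le\varrho^{-i}\|F_p\|_{R,k,p}$. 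On the function side, $h_p:=\bar{F_p}\circ\tilde\sigma$ is log-holomorphic on $S^{\tau'}_p\times D(2\rho_0)$ and agrees on $H(\log\rho^{\tau'}_p)$ with the log-sum of the convergent series $\sigma F_p$ (checked on monomials and passed to the limit), so its $Y'_1$-Cauchy coefficient along $\{|y|=\varrho\}$ is a log-holomorphic function $\bar{(\sigma F_p)_i}$ on all of $S^{\tau'}_p$, and $\tilde\sigma\left(S^{\tau'}_p\times D(2\rho_0)\right)\subseteq S^\tau_p$ gives $\|\bar{(\sigma F_p)_i}\|_{S^{\tau'}_p}\le\varrho^{-i}\|\bar{F_p}\|_{S^\tau_p}$. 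Hence $\bar{(\sigma F)_i}=_{\tau'}\sum_p\bar{(\sigma F_p)_i}$ is a legitimate decomposition, so $\|(\sigma F)_i\|_{\tau'}\le\varrho^{-i}\max\bigl(\sum_p\|F_p\|_{R,k,p}r^p,\ \sum_p\|\bar{F_p}\|_{S^\tau_p}r^p\bigr)\le2\varrho^{-i}\|F\|_\tau$, whence $\|\sigma F\|_{\tau',\rho_0}=\sum_i\rho_0^i\|(\sigma F)_i\|_{\tau'}\le2\|F\|_\tau\sum_i(\rho_0/\varrho)^i$, a fixed multiple of $\|F\|_\tau$. The identity $\bar{\sigma F}=\bar F\circ\tilde\sigma$ then follows from $\bar F=\sum_p\bar{F_p}$, $\bar{(\sigma F)_i}=\sum_p\bar{(\sigma F_p)_i}$ and Fubini, the above bounds supplying absolute convergence. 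The singular blow-up charts are handled the same way, and are easier.

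The step I expect to be the main obstacle — the one that genuinely departs from \cite{Dries:2000mx} — is controlling $\sigma(X_m)^{\alpha_m}$ for the real exponent $\alpha_m$: one cannot Taylor-expand, the binomial series has signed coefficients, and so one must absorb them through $\left|\binom{r}{i}\right|\le(-1)^i\binom{-r}{i}$ into a marginally larger base and then verify that dilating the last polyradius coordinate by the corresponding factor keeps $\hat s(s')$ inside $\cl\left(D^k_p(R)\right)$ — i.e. that the auxiliary radii $\varrho,\rho_0$ and the cutoff $R'_{m-1}$ can be chosen compatibly. The remaining pieces — the two-part bookkeeping of $\|\cdot\|_\tau$, the Cauchy estimate in $Y'_1$, the reduction from $n>0$ to $n=0$, and the identity $\bar{\sigma F}=\bar F\circ\tilde\sigma$ — are routine and follow \cref{one_var_for_another} and \cite[Lemma 4.7]{Dries:2000mx}.
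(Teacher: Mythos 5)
Your proof is correct, and it follows the same broad architecture as the paper's (reduce to $n=0$, expand in the new convergent variable $Y'_1$, bound the two pieces of the $\|\cdot\|_{\tau'}$ norm of each coefficient using the geometric inclusions from Claims 1 and 2 of \cite[Lemma 4.7]{Dries:2000mx}, and sum the resulting geometric series), but it departs from the paper at the analytic core. Where the paper invokes \cite[Lemma 6.5]{Dries:1998xr} to bound $\|\sigma F_p\|_s$ --- and must explicitly modify the stated hypotheses of that lemma, since the inequalities $\tau_m < \lambda$ and $\tau_{m-1}^\gamma(\lambda+\tau_m)<\rho_m$ hold only non-strictly in the present application --- you instead bound $\sum_i \varrho^i\|(\sigma F_p)_i\|_{s'}$ directly from the binomial expansion of $(\lambda+Y'_1)^{\alpha_m}$, absorbing the signed coefficients via $\bigl|\binom{r}{i}\bigr|\le(-1)^i\binom{-r}{i}$ into the slightly larger base $\lambda/(1-\varrho/\lambda)\le\lambda+2\rho_0$ (after a harmless shrinking of $\rho_0$ so that a radius $\varrho\in(\rho_0,2\rho_0)$ compatible with this bound exists). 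This makes the argument self-contained and sidesteps the ad hoc weakening of the cited lemma, at the small cost of carrying an auxiliary intermediate radius $\varrho$. The remaining differences --- extracting the $Y'_1$-coefficients of $\bar{F_p}\circ\tilde\sigma$ by a Cauchy integral along $\{|y|=\varrho\}$ rather than by $\nu$-th derivatives at $0$ --- are purely cosmetic, as the two give the same object with the same bound. The reduction from $n>0$ to $n=0$ and the identity $\bar{\sigma F}=\bar F\circ\tilde\sigma$ are handled exactly as in the paper.
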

	
\begin{proof}
	Let $F \in \Pc{C}{X^*,Y}_{\tau,\rho}$; again we distinguish two cases. \medskip
	
	\textbf{Case 1:} $n=0$.  Then $F \in \Pc{C}{X^*}_\tau$, and we choose convergent $F_p \in \Pc{C}{X^*}$ such that $\bar F =_\tau \sum_p \bar{F_p}$.  As in the proof of \cite[Lemma 4.7]{Dries:2000mx}, for each $p, \nu \in \NN$ we define $f_{p,\nu}:S^{\tau'}_p \into \CC$ and $f_{\nu}:S^{\tau'} \into \CC$ by $$f_{p,\nu} (w') := \frac 1{\nu!} \frac {\partial^{\nu}
	\left(\bar{F_p} \circ \tilde\sigma\right)} {\partial v^{\nu}} (w', 0) \quad\text{and}\quad f_{\nu} (w') := \frac 1{\nu!} \frac {\partial^{\nu} \left(\bar F \circ \tilde\sigma\right)} {\partial v^{\nu}} (w', 0).$$  The argument there shows that, for each $\nu$, we have $f_\nu = \sum_p f_{p,\nu}$ on $S^{\tau'}$ with $\|f_{\nu}\|_{S^{\tau'}} \le \left\|\bar F\right\|_{S^\tau}/(2\rho_0)^\nu$.  Hence $\sum_\nu \|f_\nu\|_{S^{\tau'}} \rho_0^\nu \le \left\|\bar F\right\|_{S^\tau}$, and it follows from Taylor's Theorem that $$\left(\bar F \circ \tilde\sigma\right)(w',y) = \sum_\nu f_\nu(w') y^\nu \quad\text{ on } S^{\tau'} \times D(\rho_0).$$  So it remains to show that each $f_\nu$ belongs to $\G_{\tau'}$; to do so, we define $k'$ for $K'$ as $k$ was defined for $K$, and we establish the following \medskip

	\noindent\textbf{Claim.} \textsl{Each $f_{p,\nu}$ is given by a generalized power series $F_{p,\nu}$ with support in $\Delta'$ such that $\|F_{p,\nu}\|_{R',k',p} \le C \cdot \|F_p\|_{R,k,p}/(2\rho_0)^\nu$, where $C>0$ is independent of $F$, $p$ or $\nu$. } \medskip
	
	To see the claim, we use \cite[Lemma 6.5]{Dries:1998xr}---or more precisely, the following modification of it:  the stated hypotheses there, namely, that $\tau \le \rho$, $\tau_m < \lambda$ and $\tau_{m-1}^\gamma(\lambda + \tau_m) < \rho_m$, were sufficient for the purposes of that paper, but not quite necessary to obtain the same conclusion from the proof of that lemma.  Indeed, it suffices to assume that $\tau_i \le \rho_i$ for all $i \ne m$, that $\tau_m < \lambda$ and that $\tau_{m-1}^\gamma(\lambda + \tau_m) \le \rho_m$ to obtain the same conclusion, and we shall verify  these weaker hypotheses below (with $\gamma = 1)$ in order to apply that lemma here, without further mention of this discrepancy.
	
	On the one hand, by Taylor's Theorem, we have for each $p$ that $$\left(\bar{F_p} \circ \tilde\sigma\right)(w',y) = \sum_{\nu} f_{p,\nu}(w') y^\nu \quad\text{on } S^{\tau'}_p \times D(\rho_0).$$  On the other hand, using the binomial formula, we have for each $p$ that $$\sigma F_p = \sum_{\nu} F_{p,\nu}(X') \cdot X_m^\nu \quad\text{with}\quad F_{p,\nu}(X') = \frac1{\nu!} \frac{\partial^\nu (\sigma F_p)}{\partial X_m^\nu}(X',0);$$  note that each $F_{p,\nu}$ has support contained in $\Delta'$.
	
	We now fix an arbitrary $s' \in \cl\left(D^{k'}_p(\log R')\right) \cap (0,\infty)^{m-1}$ and set $s:= (s',2\rho_0)$ and $t:= \sigma(s)$.  By Claim 2 of \cite[Lemma 4.7]{Dries:2000mx}, we have $t \in \cl\left(D^k_p(\log R)\right)$.  Since $s_i = t_i$ for $i=1, \dots, m-1$, $s_m = 2\rho_0 < \lambda$ and $s_{m-1}(\lambda + s_m) = t_m$, we get from \cite[Lemma 6.5]{Dries:1998xr} a constant $C\ge 1$, independent of $F$, $p$ or $\nu$, such that $$\|\sigma F_p\|_s \le C \|F_p\|_t \le C \|F_p\|_{R,k,p}.$$  Since $\|\sigma F_p\|_s = \sum_\nu \|F_{p,\nu}\|_{s'} s_m^\nu$ by definition of $\|\cdot\|_s$, it follows that $$\|F_{p,\nu}\|_{s'} \le \frac C{(2\rho_0)^\nu} \|F_p\|_{R,k,p}, \quad\text{for each } p \text{ and } \nu.$$  Since $s' \in \cl\left(D^{k'}_p(\log R')\right) \cap (0,\infty)^{m-1}$ was arbitrary, we finally get $$\|F_{p,\nu}\|_{R',k',p} \le \frac C{(2\rho_0)^\nu} \|F_p\|_{R,k,p}, \quad\text{for each } p \text{ and } \nu.$$
	
	Finally, we get from \cite[Lemmas 5.9(2,3) and 6.3(4)]{Dries:1998xr} that $f_{p,\nu} = \bar{F_{p,\nu}}$, for each $p$ and $\nu$.  This finishes the proof of the claim. \medskip
	
	It follows from the claim that $f_\nu =_{\tau'} \sum_p f_{p,\nu}$.  Moreover, since the claim holds for all sequences $F_p$ such that $\bar F =_{\tau} \sum_p \bar{F_p}$, we also get that $\|f_\nu\|_{\tau'} \le C \cdot \|F\|_\tau/(2\rho_0)^\nu$ for each $\nu$.  It follows that $\|\sigma F\|_{\tau',\rho_0} = \sum_\nu \|f_\nu\|_{\tau'} \rho_0^\nu \le C \|F\|_\tau$, which finishes the proof of the proposition in Case 1. \medskip
	
	\textbf{Case 2:} $n>0$.  Let $F = \sum_{\beta \in \NN^n} F_\beta(X) Y^\beta$.  Then $\|\sigma F\|_{\tau',\rho'} = \sum_{\beta \in \NN^n}  \|\sigma F_\beta\|_{\tau',\rho_0}\ \rho^\beta$, so the proposition in Case 2 follows from Case 1 as in the proof of \cref{one_var_for_another}.
\end{proof}

\subsection*{Ramifications}
	Let $\sigma$ be a ramification of the Gevrey variable $X_{i_0}$ as in \cref{subst_expls}(3).  Permuting coordinates, we may assume that $i_0 = 1$.  Fix $\tau \in \T_m$ and $\rho \in (0,\infty)^n$.  We define $$K':= \set{(k_1/\alpha, k_2, \dots, k_m):\ k \in K},$$ $$R':= \left(R_1^{1/\alpha}, R_2, \dots, R_m\right)$$ and $$\Delta':= \set{(\beta_1/\alpha, \beta_2, \dots, \beta_m):\ \beta \in \Delta},$$ and we set $\tau':= (K',R',r,\theta, \Delta')$.  then $\Delta'$ is natural, and we have $\tilde\sigma\left(S^{\tau'} \times D(\rho)\right) \subseteq S^\tau \times D(\rho)$ and $\tilde\sigma\left(S^{\tau'}_p \times D(\rho)\right) \subseteq S^\tau_p \times D(\rho)$ for each $p$.  
	
\begin{prop} \label{ramification_prop}
	For $F \in \Pc{C}{X^*,Y}_{\tau,\rho}$, we have $\sigma F \in \Pc{C}{(X')^*,Y'}_{\tau',\rho}$ such that $\bar{\sigma F} = \bar{F} \circ \tilde\sigma$ and $\|\sigma F\|_{\tau',\rho} \le \|F\|_{\tau,\rho}$.
\end{prop}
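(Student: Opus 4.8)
The plan is to imitate the proof of \cref{one_var_for_another} almost line for line, the only change being that the elementary substitution $X_m\mapsto X_{m-1}$ there is replaced by the monomial substitution $X_1\mapsto (X'_1)^{\alpha}$. The two domain containments $\tilde\sigma\bigl(S^{\tau'}\times D(\rho)\bigr)\subseteq S^\tau\times D(\rho)$ and $\tilde\sigma\bigl(S^{\tau'}_p\times D(\rho)\bigr)\subseteq S^\tau_p\times D(\rho)$, together with the naturality of $\Delta'$, have already been recorded in the paragraph preceding the statement, so they may be used freely. I would split the argument according to whether $n=0$. In contrast with the blow-up case, no analogue of \cite[Lemma 6.5]{Dries:1998xr} is needed, because $\sigma$ acts as a single generalized monomial on the Gevrey variable $X_1$ and fixes everything else, so the relevant identity between polyradius norms is immediate.

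First, the case $n=0$. Given $F\in\Pc{C}{X^*}_{\tau}$, choose convergent $F_p$ with $\bar F=_\tau\sum_p\bar{F_p}$ and with both $\sum_p\|F_p\|_{R,k,p}\,r^p$ and $\sum_p\|\bar{F_p}\|_{S^\tau_p}\,r^p$ within a prescribed factor of $\|F\|_\tau$. Since $\tilde\sigma$ carries $S^{\tau'}_p$ into $S^\tau_p$, composition with $\tilde\sigma$ does not increase the sup norm, so $\sum_p\|\bar{\sigma F_p}\|_{S^{\tau'}_p}\,r^p\le\sum_p\|\bar{F_p}\|_{S^\tau_p}\,r^p$; and $\bar{\sigma F}=\bar F\circ\tilde\sigma$ follows by passing to the limit from the convergent case exactly as in \cref{one_var_for_another}. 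For the other norm, fix $p$ and a polyradius $s'$ occurring in the definition of $\|\cdot\|_{R',k',p}$, and set $\psi(s'):=\bigl((s'_1)^{\alpha},s'_2,\dots,s'_m\bigr)$. Reading off the definitions of these norms one has $\|\sigma F_p\|_{s'}=\|F_p\|_{\psi(s')}$, and $K'$ and $R'$ have been chosen precisely so that $\psi(s')$ is one of the polyradii occurring in $\|\cdot\|_{R,k,p}$; hence $\|\sigma F_p\|_{s'}\le\|F_p\|_{R,k,p}$, and as $s'$ was arbitrary, $\|\sigma F_p\|_{R',k',p}\le\|F_p\|_{R,k,p}$. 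Therefore $\sum_p\|\sigma F_p\|_{R',k',p}\,r^p<\infty$, so $\sigma F=_{\tau'}\sum_p\sigma F_p$ and $\sigma F\in\Pc{C}{(X')^*}_{\tau'}$; taking the infimum over the choices of $\{F_p\}$ gives $\|\sigma F\|_{\tau'}\le\|F\|_\tau$.

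Next I would deduce the case $n>0$ from the case $n=0$ exactly as in Case 2 of \cref{one_var_for_another}: write $F=\sum_{\beta\in\NN^n}F_\beta(X)Y^\beta$ with $F_\beta\in\Pc{C}{X^*}_\tau$; the case $n=0$ gives $\sigma F_\beta\in\Pc{C}{(X')^*}_{\tau'}$ with $\|\sigma F_\beta\|_{\tau'}\le\|F_\beta\|_\tau$, so $\sigma F=\sum_\beta(\sigma F_\beta)(Y')^\beta$ lies in $\Pc{C}{(X')^*,Y'}_{\tau',\rho}$ and $\|\sigma F\|_{\tau',\rho}\le\|F\|_{\tau,\rho}$; the identity $\bar{\sigma F}=\bar F\circ\tilde\sigma$ then follows by approximating $F$ by its partial sums $F_d=\sum_{\Sigma\beta\le d}F_\beta Y^\beta$ --- for which it holds by the case $n=0$ together with the finiteness of the support in $Y$ --- and letting $d\to\infty$, the error being controlled by $\sum_{\Sigma\beta>d}\|F_\beta\|_\tau\rho^\beta$.

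I do not expect a genuine obstacle here: the ramification is the gentlest of the substitutions in \cref{subst_expls}, being monomial in a single Gevrey variable, and the argument is almost entirely bookkeeping. The one step that calls for an actual (short) computation is the verification that the reparametrization $\psi$ of polyradii carries the closure of the log-$k'$-polydisk $D^{k'}_p(R')$ into that of $D^k_p(R)$, and, on the angular side, that $\tilde\sigma$ carries $S^{\tau'}$ and $S^{\tau'}_p$ into $S^\tau$ and $S^\tau_p$ --- that is, that the defining inequalities of a log-$k$-polydisk and a log-$k$-polysector are respected when the first coordinate is raised to the power $\alpha$; this is elementary, and $K'$, $R'$, $\Delta'$ were defined so as to make it hold.
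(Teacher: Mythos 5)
Your proof is correct and follows essentially the same route as the paper's own argument: reduce to $n=0$, choose a sequence $F_p$ witnessing $\bar F =_\tau \sum_p \bar{F_p}$, use the identity $\|\sigma F_p\|_{s'} = \|F_p\|_{\psi(s')}$ together with the already-recorded containment of ramified polydisks/polysectors to bound both families of sums, and then take the infimum over witnessing sequences. The paper writes $\sigma(s)$ where you write $\psi(s')$, but these are the same map on polyradii, and the rest is identical bookkeeping, with your treatment of the $n>0$ reduction and the limit argument for $\bar{\sigma F}=\bar F\circ\tilde\sigma$ being a bit more explicit than the paper's one-line reference back to \cref{one_var_for_another}.
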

	
\begin{proof} 
	We let $F \in \Pc{C}{X^*,Y}_{\tau,\rho}$, and we reduce to the case $n=0$ as in the proof of \cref{one_var_for_another}.  In this case, we let $k'$ be defined for $K'$ as $k$ is defined for $K$, and we choose convergent $F_p \in \Pc{C}{X^*}$ such that $\bar F =_{\tau} \sum_p \bar{F_p}$.  
	
	Fix $p$ and let $s \in D^{k'}_p(R') \cap (0,\infty)^m$ be a polyradius.  Then $\sigma(s) \in D^k_p(R)$ by the above and $\|\sigma F_p\|_s = \|F_p\|_{\sigma(s)} \le \|F_p\|_{R,k,p}$.  Since $s \in D^{k'}_p(R')$ was arbitrary, this shows that $\|\sigma F_p\|_{R',k',p} \le \|F_p\|_{R,k,p}$.  Since we obviously have $\|\bar{\sigma F_p}\|_{S^{\tau'}_p} \le \|\bar{F_p}\|_{S^\tau_p}$, it follows that $\sigma F \in \Pc{C}{(X')^*}_{\tau'}$ and $\bar{\sigma F} = \bar F \circ \tilde\sigma$.
	
	Moreover, since the above inequalities hold for all choices of $F_p$ such that $\bar F =_\tau \sum_p F_p$, it follows that $\|\sigma F\|_{\tau'} \le \|F\|_\tau$.
\end{proof}

\subsection*{Translations}  
Let $\sigma:\{X,Y\} \into \Pc{R}{(X,X')^*,Y'}_{\G}$ be the translation by a point $(a,b) \in [0,\infty)^m \times \RR^n$.

\begin{prop} \label{translation_prop}
	Let $\tau \in \T_m$ and $\rho \in (0,\infty)^n$ be  such that $\|(a,b)\| \le (R,\rho)$.  Then for every $F \in \Pc{C}{X^*,Y}_{\tau,\rho}$, we have $\sigma F \in \Pc{C}{(X')^*, Y'}_\G$ and $\bar{\sigma F} = \bar F \circ \tilde\sigma$.
\end{prop}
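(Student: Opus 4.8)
The plan is to decompose $\sigma$ as a composition of elementary substitutions and treat each factor separately, using that a composition of substitutions each satisfying the conclusion of the proposition again satisfies it (the associated maps $\tilde\sigma$ composing in the opposite order) and that permutations of the Gevrey variables are already handled (\cref{permutation_lemma}). Concretely, after a permutation grouping the Gevrey variables $X_i$ with $a_i=0$ first, $\sigma$ factors as the pure convergent translation $\sigma(Y_j)=b_j+Y'_j$ followed by successively translating each remaining Gevrey variable $X_i$ (with $a_i>0$) by $a_i$; so it suffices to prove the proposition when \textbf{(i)} $\sigma(X_i)=X_i$ for all $i$ and $\sigma(Y_j)=b_j+Y'_j$ for all $j$, with $\|b\|<\rho$; and when \textbf{(ii)} $\sigma(X_i)=X_i$ for $i<m$, $\sigma(X_m)=a+Z$ with $Z$ a fresh convergent variable and $0<a<R_m$, and $\sigma(Y_j)=Y_j$. (Note that $\bar F\circ\tilde\sigma$ is only defined when $\|(a,b)\|<(R,\rho)$ strictly, since otherwise the image of $\tilde\sigma$ meets the boundary of $S^\tau\times D(\rho)$, so this is the only case that needs proving.)

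Case (i) I would dispatch directly: writing $F=\sum_{\beta}F_\beta(X)Y^\beta$ with $F_\beta\in\Pc{C}{X^*}_\tau$, the binomial theorem gives $\sigma F=\sum_{\beta'}\big(\sum_{\beta\ge\beta'}\binom{\beta}{\beta'}b^{\beta-\beta'}F_\beta\big)(Y')^{\beta'}$, and for any $\rho'$ with $\|b\|+\rho'<\rho$ componentwise the triangle inequality and homogeneity of $\|\cdot\|_\tau$ yield $\sum_{\beta'}\big\|\sum_{\beta\ge\beta'}\binom{\beta}{\beta'}b^{\beta-\beta'}F_\beta\big\|_\tau(\rho')^{\beta'}\le\sum_{\beta}\|F_\beta\|_\tau(\|b\|+\rho')^{\beta}\le\|F\|_{\tau,\rho}<\infty$; hence $\sigma F\in\Pc{C}{X^*;Y'}_{\tau,\rho'}$, and $\bar{\sigma F}=\bar F\circ\tilde\sigma$ is immediate because $\tilde\sigma(w,y')=(w,b+y')$.

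For Case (ii) — the main point — I would model the argument on \cref{blow-up_prop}. First reduce to $n=0$ by treating the $Y$-coefficients one at a time, as in \cref{one_var_for_another,blow-up_prop}; so take $F\in\Pc{C}{X^*}_\tau$, $f=\bar F$, and a decomposition $f=_\tau\sum_p f_p$ (with $f_p=\bar{F_p}$ on $H^k_p(\log R)$). Put $K':=\{(k_1,\dots,k_{m-1}):k\in K\}$, $R':=(R_1,\dots,R_{m-1})$, $\Delta':=$ the (still natural) image of $\Delta$ under forgetting the last coordinate, pick $\theta'\in(\pi/2,\theta)$ and a small $\rho_0>0$ (depending only on $a,R_m$) so that $\tilde\sigma(w',z):=(w',\log(a+z))$ maps $S^{\tau'}\times D(\rho_0)$ into $S^\tau$ and $S^{\tau'}_p\times D(2\rho_0)$ into $S^\tau_p$ for every $p$ — possible because $a<R_m$ keeps $\log(a+z)$ in the interior of the $m$-th factor — and set $\tau':=(K',R',r,\theta',\Delta')$. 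Writing $f_{p,\nu}(w'):=\frac{1}{\nu!}\partial_z^\nu(f_p\circ\tilde\sigma)(w',0)$ and $f_\nu:=\sum_p f_{p,\nu}$, Cauchy's estimate over $|z|=2\rho_0$ gives $\|f_{p,\nu}\|_{S^{\tau'}_p}\le\|f_p\|_{S^\tau_p}(2\rho_0)^{-\nu}$ and $\|f_\nu\|_{S^{\tau'}}\le\|f\|_{S^\tau}(2\rho_0)^{-\nu}$, and Taylor's theorem gives $(\bar F\circ\tilde\sigma)(w',z)=\sum_\nu f_\nu(w')z^\nu$ on $S^{\tau'}\times D(\rho_0)$. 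Granting that each $f_{p,\nu}$ is the log-sum of a generalized power series $F_{p,\nu}$ with support in $\Delta'$ and $\sum_\nu\|F_{p,\nu}\|_{R',k',p}\,\rho_0^\nu\le C\|F_p\|_{R,k,p}$ with $C$ independent of $p$, summing over $\nu$ and then over $p$ shows $\sigma F:=\sum_\nu\big(\sum_p F_{p,\nu}\big)Z^\nu\in\Pc{C}{(X')^*;Z}_{\tau',\rho_0}$, and by absolute convergence $\bar{\sigma F}(w',z)=\sum_p f_p(w',\log(a+z))=\bar F(w',\log(a+z))=(\bar F\circ\tilde\sigma)(w',z)$; reinstating the $Y$'s as in Case 2 of \cref{one_var_for_another}, then composing with Case (i) and permutations, would finish the proof.

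The hard part will be precisely that granted claim about $f_{p,\nu}$, and here Case (ii) really departs from the blow-up and ramification arguments: one \emph{cannot} simply say ``$f_{p,\nu}$ comes from translating the convergent series $F_p$'', because for large $p$ the point $X_m=a$ lies outside the ordinary polydisk of convergence $D(\rho^\tau_p)$ of $F_p$. The way around this, which I expect to be the crux, is that the norm $\|F_p\|_{R,k,p}$ is a supremum over the closed \emph{cut} polydisk $\cl(D^k_p(R))$, whose closure still contains every point $(s',R_m)$ with $s'\in(0,\infty)^{m-1}$ small; hence $F_p$ in fact converges at $X_m=\zeta$ for all $|\zeta|<R_m$, uniformly for $X'$ in a small polydisk, so $F_p(X',a+Z)=\sum_\nu F_{p,\nu}(X')Z^\nu$ is a genuine convergent mixed series, each $F_{p,\nu}$ has support in $\Delta'$, and expanding by the generalized binomial theorem together with the coefficient estimates coming from $\|F_p\|_{R,k,p}<\infty$ gives $\|F_{p,\nu}\|_{R',k',p}\le C_\nu\,a^{-\nu}\|F_p\|_{R,k,p}$ with $C_\nu:=\sum_{\alpha_m}\big|\binom{\alpha_m}{\nu}\big|(a/R_m)^{\alpha_m}$, where $\sum_\nu C_\nu(\rho_0/a)^\nu<\infty$ for $\rho_0$ small (this is exactly where having convergence radius $R_m$, rather than the shrinking $\rho^\tau_p$, in the $X_m$-direction is decisive, and it is what makes $C$ uniform in $p$). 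The identity $f_{p,\nu}=\bar{F_{p,\nu}}$ on $H^{k'}_p(\log R')$ then follows from the same one-variable facts used in \cref{blow-up_prop} (\cite[Lemmas 5.9 and 6.3]{Dries:1998xr}). The remaining points — the domain inclusions $\tilde\sigma(S^{\tau'}_p\times D(2\rho_0))\subseteq S^\tau_p$ and the reduction to strict inequalities in $\|(a,b)\|<(R,\rho)$ — are routine and parallel Section 4 of \cite{Dries:2000mx} and the earlier parts of this section.
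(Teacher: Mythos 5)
Your proposal follows essentially the same route as the paper. The paper's proof is very terse: it reduces to the case where all but one coordinate of $(a,b)$ vanishes (using permutation of Gevrey variables if needed), handles a single nonzero convergent coordinate by ``the usual Taylor expansion argument,'' and handles a single nonzero Gevrey coordinate by running the proof of \cref{blow-up_prop} with \cite[Lemma 6.6]{Dries:1998xr} in place of \cite[Lemma 6.5]{Dries:1998xr}. Your decomposition into Case (i) and the chain of single-Gevrey-variable translations in Case (ii) is the same reduction, and your binomial computation of $F_{p,\nu}$ together with the observation that $\|F_p\|_{R,k,p}<\infty$ forces convergence of $F_p$ at $X_m=R_m$ (for small $X'$) is precisely the content that \cite[Lemma 6.6]{Dries:1998xr} encapsulates; you have, in effect, inlined that lemma instead of citing it. The structure built around this (reduction to $n=0$, Cauchy estimates for $f_{p,\nu}$ on the cut polydisks, the domain inclusions $\tilde\sigma(S^{\tau'}_p\times D(2\rho_0))\subseteq S^\tau_p$, summing over $\nu$ and $p$, reinstating the $Y$'s) mirrors \cref{blow-up_prop} exactly as the paper intends.

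One small technical slip: in your key estimate $\|F_{p,\nu}\|_{R',k',p}\le C_\nu a^{-\nu}\|F_p\|_{R,k,p}$, the constant $C_\nu$ should be $\sup_{\alpha_m}\bigl|\binom{\alpha_m}{\nu}\bigr|(a/R_m)^{\alpha_m}$ rather than the sum over $\alpha_m$, since the passage from $\sum_{\alpha_m}|a_{(\alpha',\alpha_m),p}|\,|\binom{\alpha_m}{\nu}|\,a^{\alpha_m}$ to $C_\nu\sum_{\alpha_m}|a_{(\alpha',\alpha_m),p}|\,R_m^{\alpha_m}$ is a term-by-term comparison; the sup still satisfies $\sum_\nu C_\nu(\rho_0/a)^\nu<\infty$ for $\rho_0$ small enough (one checks $C_\nu$ grows roughly like $(\log(R_m/a))^{-\nu}$), so the rest of the argument is unaffected.
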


\begin{proof}
	We may assume that all but one of the coordinates of $(a,b)$ are zero.  If $a_i \ne 0$ for some $i$, we may assume, after permuting the Gevrey variables if necessary, that $i = m$.  This case is handled similarly to the proof of \cref{blow-up_prop}, except that we use \cite[Lemma 6.6]{Dries:1998xr} instead of \cite[Lemma 6.5]{Dries:1998xr} (with a similar weakening of hypotheses for the former as used above for the latter).  So we assume that $b_j \ne 0$ for some $j$; in this case, we adapt the usual Taylor expansion argument for convergent series to obtain the conclusion (details are left to the reader).
\end{proof}

\subsection*{Infinitesimal substitutions} 
	We recall the following observations:

\begin{lemma} \label{adding_vars}
	Let $\tau = (K,R,r,\theta,\Delta) \in \T_m$ and $X' = (X'_1, \dots, X'_n)$, and let $F \in \Pc{C}{X^*}_\tau$.
	\begin{enumerate}
		\item For any $\tau' \geq \tau$, we have $F \in \Pc{C}{X^*}_{\tau'}$ with $\|F\|_{\tau'} \le \|F\|_\tau$.
		\item The series $G(X,X'):= F(X)$ belongs to $\Pc{C}{(X,X')^*}_{\tau'}$ with $\|G\|_{\tau'} = \|F\|_{\tau}$, where $\tau' = (K',R',r,\theta,\Delta')$ with $K' := \set{(k,0):\ k \in K}$, $R' := (R,S)$ for any $S>0$, and $\Delta':= \Delta \times \Gamma$ for any natural $\Gamma \subseteq [0,\infty)^n$.
	\end{enumerate}
\end{lemma}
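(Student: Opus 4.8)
The plan is to prove both parts by unwinding the definition of $\G_\tau$ and of the norm $\|\cdot\|_\tau$ from \cref{multi_gms} and following a nearly optimal decomposition $\bar F=_\tau\sum_p f_p$ through the change of data; the only inputs beyond definition-chasing are the product decompositions of the log-polysectors and log-polydisks recorded in \cref{poly_section} and, for one of the two inequalities in part~(2), the norm estimate of \cref{fixing_lemma}.

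For part~(1), fix $\varepsilon>0$ and a decomposition $\bar F=_\tau\sum_p f_p$ whose associated data satisfy $\max\big\{\sum_p\|T(f_p)\|_{R,k,p}\,r^p,\ \sum_p\|f_p\|_{S^\tau_p}\,r^p\big\}<\|F\|_\tau+\varepsilon$, and set $F_p:=T(f_p)$. When $\tau'\le\tau$ in $\T_m$ one reads off directly from the definitions that $S^{\tau'}_p\subseteq S^\tau_p$, that $\cl\big(D^k_p(R')\big)\subseteq\cl\big(D^k_p(R)\big)$ for every relevant $k$ and every $p$ (because $R'\le R$), that $r'\le r$, and that $\Delta\subseteq\Delta'$. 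Hence the restrictions $f'_p:=f_p\rest{S^{\tau'}_p}$ still agree with $\bar{F_p}$ on the relevant log-polydisk, and $\|F_p\|_{R',k,p}\le\|F_p\|_{R,k,p}$ while $\|f'_p\|_{S^{\tau'}_p}\le\|f_p\|_{S^\tau_p}$; so $\bar F\rest{S^{\tau'}}=_{\tau'}\sum_p f'_p$, whence $F=T\big(\bar F\rest{S^{\tau'}}\big)\in\Pc{C}{X^*}_{\tau'}$ and $\|F\|_{\tau'}<\|F\|_\tau+\varepsilon$. Letting $\varepsilon\to0$ gives $\|F\|_{\tau'}\le\|F\|_\tau$. (The membership alone is just the restriction $\G_\tau\to\G_{\tau'}$ noted in \cref{gen_mult_germs_section}; what part~(1) adds is the norm bound.)

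For part~(2), the key structural fact is that, writing $k':=(k,0)$ for $k\in K$, \cref{poly_section} yields $S^{\tau'}=S^\tau\times H(\log S)$, $S^{\tau'}_p=S^\tau_p\times H(\log S)$ and $\cl\big(D^{k'}_p(R,S)\big)=\cl\big(D^k_p(R)\big)\times\cl\big(D(S)\big)$, so that any convergent generalized power series not involving $X'$ has, when regarded as a series in $(X,X')$, the same $\|\cdot\|_{R',k',p}$-norm as its $\|\cdot\|_{R,k,p}$-norm. Thus, starting from $\bar F=_\tau\sum_p f_p$ as above, put $g_p(w,w'):=f_p(w)$ on $S^{\tau'}_p$; each $g_p$ is bounded and log-holomorphic, $T(g_p)=F_p$ (read in $(X,X')$, with support in $\Delta\times\{0\}\subseteq\Delta'$ once we harmlessly arrange that $0\in\Gamma$), $\|T(g_p)\|_{R',k',p}=\|F_p\|_{R,k,p}$ and $\|g_p\|_{S^{\tau'}_p}=\|f_p\|_{S^\tau_p}$. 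Hence $\bar G:=\sum_p g_p$ satisfies $\bar G=_{\tau'}\sum_p g_p$, so $G=T(\bar G)\in\Pc{C}{(X,X')^*}_{\tau'}$, and taking the infimum over decompositions gives $\|G\|_{\tau'}\le\|F\|_\tau$. For the reverse inequality I would fiber $\bar G$ at the logarithmic origin in the $X'$-coordinates: by \cref{fixing_lemma}, applied to the $X'$-variables with hypotheses met trivially at $-\infty$, the function $w\mapsto\bar G(w,-\infty)$ lies in $\G_\tau$ with $\tau$-norm at most $\|G\|_{\tau'}$, and it equals $\bar F$ because $\bar G$ does not depend on its $X'$-argument; therefore $\|F\|_\tau\le\|G\|_{\tau'}$ and equality holds. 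There is no serious obstacle in any of this — it is a somewhat fiddly unwinding of the several-variable norms and domains — the only points needing care being that the admissible support set may only grow (never shrink), the harmless normalization $0\in\Gamma$, and the appeal to \cref{fixing_lemma} for the one inequality in part~(2) that is not a direct comparison of norms.
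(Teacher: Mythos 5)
Your proof is correct and follows the same basic strategy as the paper, which compresses its argument to two short sentences: for (1) it cites the restriction map from the discussion in \cref{gen_mult_germs_section}, and for (2) it records the product decompositions $S^{\tau'} = S^\tau \times H(\log S)$, $S^{\tau'}_p = S^\tau_p \times H(\log S)$ and asserts the norm equality. Two remarks.

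First, you correctly read part (1) as concerning $\tau' \le \tau$; given the directed ordering of \cref{gen_mult_germs_section} and the way the lemma is actually invoked later (to pass from $\eta_1$ with $K_1$ to $\eta$ with $L = K_1 \cup K_2 \supseteq K_1$, i.e.\ downward), the printed ``$\tau' \ge \tau$'' must be a sign slip. Your unwinding of the norm bound — near-optimal decomposition, pointwise domination of each summand norm on the smaller domains — is exactly the content the paper is gesturing at with its citation.

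Second, for the reverse inequality in part (2) you invoke \cref{fixing_lemma}. That is sound in spirit, but \cref{fixing_lemma} as stated fibers out the \emph{first} $m-1$ coordinates of an $m$-variable function; to fiber out the \emph{last} $n$ coordinates of an $(m+n)$-variable germ you need a permutation and the ``$\nu = n$'' version of the lemma (the paper does use such $\nu$-variants, e.g.\ ``\cref{fixing_lemma} with $\nu = 1$'', so this is available but worth flagging). One can avoid \cref{fixing_lemma} altogether: starting from any decomposition $\bar G =_{\tau'} \sum_p g_p$, the functions $f_p(w) := g_p(w,-\infty)$ already yield a $\tau$-decomposition of $\bar F$, with $\|f_p\|_{S^\tau_p} \le \|g_p\|_{S^{\tau'}_p}$ and $\|T(f_p)\|_{R,k,p} \le \|T(g_p)\|_{R',k',p}$ thanks to the product decompositions; this gives $\|F\|_\tau \le \|G\|_{\tau'}$ directly, and is presumably what the paper's terse ``so that $\|G\|_{\tau'} = \|F\|_\tau$'' intends. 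Your observation that one must take $0 \in \Gamma$ (else $\Delta' = \Delta \times \Gamma$ cannot contain $\supp G = \supp F \times \{0\}$) is a genuine point that the paper leaves implicit.
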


\begin{proof}
	Part (1) is a consequence of the discussion in \cref{gen_mult_germs_section}.  For part (2), note that $S^{\tau'} = S^\tau \times H(\log S)$ and $S^{\tau'}_p = S^\tau_p \times H(\log S)$, so that $\|G\|_{\tau'} = \|F\|_\tau$.
\end{proof}

\begin{nrmks} \label{combinatorics}
	We also need the following (crude) estimates from combinatorics.  Let $n \in \NN$.
	\begin{enumerate}
		\item Given $k \in \NN$, the number of elements $\beta \in \NN^n$ such that $\sum\beta = k$ is bounded above by $k^n$, because $\{\beta \in \NN^n:\ \sum\beta = k\} \subseteq \{1, \dots, k\}^n$.
		\item For $\beta \in \NN^n$ and $k \in \NN$, the number of ways to write $\beta$ as the sum of at most $\sum\beta$ many nonzero elements in $\NN^n$ is bounded above by $2^{n\sum\beta}$.  To see this, note that if $n=1$, then each such sum corresponds to a strictly increasing $k$-tuple $0 < a_1 < \cdots < a_k = \beta$ with $k \le \beta = \sum\beta$; so there are at most $2^\beta = 1+ \sum_{k=1}^\beta \binom{\beta}{k}$ many ways to write $\beta$ as the sum of at most $\beta$ many nonzero natural numbers.  The claim for general $n$ follows.
		\item For $\gamma \in \NN^{n'}$ and $k \in \NN$, denote by $N(\gamma,k)$ the number of ways to write $\gamma$ as the sum of exactly $k$ many nonzero elements in $\NN^{n'}$.  Then for any nonzero $\beta \in \NN^n$, we have $$N\left(\gamma,\sum\beta\right) = \sum_{\gamma^1 + \cdots + \gamma^n = \gamma}  \left(\prod_{j=1}^n N\left(\gamma^j,\beta_j\right)\right).$$
	\end{enumerate}
\end{nrmks}

Let $\sigma$ be an infinitesimal substitution as in \cref{subst_expls}(5); in particular, we may assume that $n > 0$.  Let $\tau = (K,R,r,\theta,\Delta) \in \T_m$ and $\rho \in (0,\infty)^n$, and let $F = \sum_{\beta \in \NN^n} F_\beta Y^\beta \in \Pc{C}{X^*,Y}_{\tau,\rho}$.  By \cref{mixed_directed_lemma}(2), there exist $\tau' \in \T_{m'}$ and $\rho' \in (0,\infty)^{n'}$ such that 
\begin{equation}\label{prime_est}
	\|\sigma(Y_j)\|_{\tau',2^{n'+1}\rho'} \le \frac{\rho_j}2 \quad\text{for } j = 1, \dots, n;
\end{equation}
in particular, we have $\tilde{\sigma}\left(S^\tau \times S^{\tau'} \times D(\rho')\right) \subseteq S^\tau \times D(\rho)$.
For each $j$, we write $\sigma(Y_j) = \sum_{\gamma \in \NN^{n'}} G_{j,\gamma}(X') (Y')^\gamma$ with $G_{j,\gamma} \in \Pc{C}{(X')^*}_{\tau'}$, and we write $\tau' = (K',R',r,\theta,\Delta')$ (we can always reduce to the case where $r$ and $\theta$ are the same for both $\tau$ and $\tau'$).
	
By \cref{adding_vars}(2), we have $F_\beta \in \Pc{C}{(X,X')^*}_{\eta_1}$ for each $\beta$, where $$\eta_1 = (K_1,(R,R'),r,\theta,\Delta \times \Delta')$$ with $K_1 := \set{(k,0):\ k \in K}$.  Again by \cref{adding_vars}(2), and by \cref{permutation_lemma}, we have $G_{j,\gamma} \in \Pc{C}{(X,X')^*}_{\eta_2}$ for each $j$ and each $\gamma$, where $$\eta_2 = (K_2,(R,R'),r,\theta,\Delta \times \Delta')$$ with $K_1 := \set{(0,k):\ k \in K'}$.  So from \cref{adding_vars}(1), we get that each $F_\beta$ and each $G_{j,\gamma}$ belongs to $\Pc{C}{(X,X')^*}_\eta$, where $\eta = (L,(R,R'),r,\theta, \Delta \times \Delta')$ with $L:= K_1 \cup K_2$.  Moreover, \cref{adding_vars} implies that $\|F_\beta\|_\eta \le \|F_\beta\|_\tau$ and $\|G_{j,\gamma}\|_\eta \le \|G_{j,\gamma}\|_{\tau'}$ for each $\beta$, $j$ and $\gamma$. 

\begin{prop} \label{infinitesimal_prop}
	For $F \in \Pc{C}{X^*,Y}_{\tau,\rho}$, we have $\sigma F \in \Pc{C}{(X,X')^*,Y'}_{\eta,\rho'}$ such that $\bar{\sigma F} = \bar{F} \circ \tilde\sigma$ and $\|\sigma F\|_{\eta,\rho'} \le A\|F\|_{\tau,\rho}$, for some absolute constant $A>0$.
\end{prop}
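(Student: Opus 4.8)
The plan is to expand $\sigma F$ as an honest mixed series in the Gevrey variables $(X,X')$ and the convergent variables $Y'$, and to bound its $\|\cdot\|_{\eta,\rho'}$-norm by $\|F\|_{\tau,\rho}$ up to an absolute constant. Write $F=\sum_{\beta\in\NN^n}F_\beta(X)Y^\beta$ and $\sigma(Y_j)=\sum_{\gamma\in\NN^{n'}}G_{j,\gamma}(X')(Y')^\gamma$; since $\sigma(0)=0$ we have $G_{j,0}(0)=0$, so each $G_{j,0}$ has positive order in $X'$. Formally $\sigma F=\sum_\beta F_\beta(X)\prod_{j=1}^n\bigl(\sum_\gamma G_{j,\gamma}(X')(Y')^\gamma\bigr)^{\beta_j}$; multiplying out the $\Sigma\beta$ factors and collecting the coefficient of each monomial $(Y')^\delta$ produces $\sigma F=\sum_\delta H_\delta(X,X')(Y')^\delta$, where $H_\delta$ is the sum, over all $\beta$ and all choices of $\gamma^1,\dots,\gamma^{\Sigma\beta}\in\NN^{n'}$ with $\gamma^1+\cdots+\gamma^{\Sigma\beta}=\delta$ (at most $\Sigma\delta$ of the $\gamma^s$ being nonzero), of the products $F_\beta\cdot\prod_sG_{j_s,\gamma^s}$, where $j_s$ records which $\sigma(Y_j)$ the $s$-th factor came from. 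Because each factor with $\gamma^s=0$ contributes a $G_{j_s,0}$ of positive $X'$-order, only finitely many $\beta$ affect any given coefficient of $H_\delta$, so each $H_\delta$ is a well-defined generalized power series with support in $\Delta\times\Delta'$; together with the norm bound below this yields $\sigma F\in\Pc{C}{(X,X')^*,Y'}_{\eta,\rho'}$.

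First I would prove the norm estimate. By submultiplicativity of $\|\cdot\|_\eta$ on $\Pc{C}{(X,X')^*}_\eta$ (\cref{directed_lemma}(1)) and the inequalities $\|F_\beta\|_\eta\le\|F_\beta\|_\tau$, $\|G_{j,\gamma}\|_\eta\le\|G_{j,\gamma}\|_{\tau'}$ from the discussion preceding the proposition, $\|H_\delta\|_\eta$ is dominated by $\sum_\beta\|F_\beta\|_\tau\sum_{\gamma^1+\cdots+\gamma^{\Sigma\beta}=\delta}\prod_s\|G_{j_s,\gamma^s}\|_{\tau'}$. Multiplying by $(\rho')^\delta$ and summing over $\delta$, the double sum $\sum_\delta(\rho')^\delta\sum_{\gamma^\bullet}\prod_s\|G_{j_s,\gamma^s}\|_{\tau'}$ factors (all terms being non-negative) as $\prod_{s=1}^{\Sigma\beta}\bigl(\sum_\gamma\|G_{j_s,\gamma}\|_{\tau'}(\rho')^\gamma\bigr)=\prod_j\|\sigma(Y_j)\|_{\tau',\rho'}^{\beta_j}$. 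Since $\|\sigma(Y_j)\|_{\tau',\rho'}\le\|\sigma(Y_j)\|_{\tau',2^{n'+1}\rho'}\le\rho_j/2$ by \eqref{prime_est}, we obtain
\[
\|\sigma F\|_{\eta,\rho'}=\sum_\delta\|H_\delta\|_\eta(\rho')^\delta\ \le\ \sum_\beta\|F_\beta\|_\tau\prod_j(\rho_j/2)^{\beta_j}\ \le\ \sum_\beta\|F_\beta\|_\tau\rho^\beta=\|F\|_{\tau,\rho},
\]
so in fact $A=1$ works. (If one prefers to carry out this rearrangement term by term, the combinatorial estimates of \cref{combinatorics} bound the multiplicities produced when collecting the coefficient of a fixed $(Y')^\delta$ from the unboundedly many factors, and the slack $2^{n'+1}$ built into \eqref{prime_est} then absorbs them; the outcome is the same bound up to an absolute constant.)

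Next I would identify $\bar{\sigma F}$ with $\bar F\circ\tilde\sigma$ by the truncation argument used for \cref{one_var_for_another} and \cref{blow-up_prop}. For each $d$ put $F_d:=\sum_{\Sigma\beta\le d}F_\beta Y^\beta$; since $F_d$ has finite support in $Y$ and $\sigma$ is a $\CC$-algebra homomorphism there, a direct computation gives $\bar{\sigma F_d}=\bar{F_d}\circ\tilde\sigma$ on $S^\eta\times D(\rho')$. Applying the estimate above to the $\CC$-linear map $\sigma$ on $F-F_d$ gives $\|\sigma F-\sigma F_d\|_{\eta,\rho'}\le\sum_{\Sigma\beta>d}\|F_\beta\|_\tau\rho^\beta\to0$, so $\bar{\sigma F_d}\to\bar{\sigma F}$ uniformly on $S^\eta\times D(\rho')$; meanwhile $\bar{F_d}\to\bar F$ pointwise on $S^\tau\times D(\rho)$ and, by the discussion preceding the proposition, $\tilde\sigma$ maps $S^\eta\times D(\rho')$ into $S^\tau\times D(\rho)$, so $\bar{F_d}\circ\tilde\sigma\to\bar F\circ\tilde\sigma$ pointwise. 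Equating the two limits gives $\bar{\sigma F}=\bar F\circ\tilde\sigma$.

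The main point requiring care is that the number $\Sigma\beta$ of series being multiplied is governed by the $Y$-degree of $F$ rather than the $Y'$-degree of the output, so $H_\delta$ is genuinely an infinite sum over $\beta$; it is the positive $X'$-order of the $G_{j,0}$ (forced by $\sigma(0)=0$) together with the smallness of $\|\sigma(Y_j)\|_{\tau',2^{n'+1}\rho'}$ from \eqref{prime_est} that make this sum converge, both as a formal series and in the $\|\cdot\|_\eta$-norm, and hence place $\sigma F$ in $\Pc{C}{(X,X')^*,Y'}_{\eta,\rho'}$ with the asserted bound.
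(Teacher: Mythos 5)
Your proof is correct, and it actually takes a cleaner route than the paper does. Both proofs begin the same way (expand $\sigma F=\sum_\beta F_\beta(X)\prod_j\sigma(Y_j)^{\beta_j}$, collect the coefficient $H_\delta$ of $(Y')^\delta$, and use $\|F_\beta\|_\eta\le\|F_\beta\|_\tau$, $\|G_{j,\gamma}\|_\eta\le\|G_{j,\gamma}\|_{\tau'}$), but they diverge at the norm estimate. The paper restricts to \emph{nonzero} $\delta^p$ in the formula for $H_{\beta,\gamma}$ (so that $N(\gamma,k)=0$ for $k>\Sigma\gamma$), then applies the combinatorial bounds of \cref{combinatorics} to get $\|H_{\beta,\gamma}\|_\eta\le 2^{n'\Sigma\gamma}(\rho/2)^\beta/\tilde\rho^{\gamma}$, and finally needs the extra factor $2^{n'+1}$ built into $\tilde\rho=2^{n'+1}\rho'$ in \eqref{prime_est} precisely to absorb the $2^{n'\Sigma\gamma}$ and the subsequent $2^{\Sigma\gamma}$; this produces a constant $A=CD$. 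You instead observe that the non-negative double sum $\sum_\delta(\rho')^\delta\sum_{\gamma^\bullet}\prod_s\|G_{j_s,\gamma^s}\|_{\tau'}$ factors \emph{exactly} as $\prod_j\|\sigma(Y_j)\|_{\tau',\rho'}^{\beta_j}$, so $\|\sigma F\|_{\eta,\rho'}\le\sum_\beta\|F_\beta\|_\tau\prod_j(\rho_j/2)^{\beta_j}\le\|F\|_{\tau,\rho}$ directly, giving $A=1$ with no counting and only the monotonicity $\|\cdot\|_{\tau',\rho'}\le\|\cdot\|_{\tau',2^{n'+1}\rho'}$ from the slack built into \eqref{prime_est}. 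An incidental advantage of your factorization is that it handles the terms with $\gamma^s=0$ without modification, whereas the paper's explicit restriction to nonzero $\delta^p$ only matches the actual expansion of $\sigma(Y_j)^{\beta_j}$ when $G_{j,0}=0$ (the standing hypothesis $\sigma(0)=0$ gives only $G_{j,0}(0)=0$); your remark that positivity of $\ord(G_{j,0})$ makes $H_\delta$ a genuine (norm-convergent, by completeness) series is the right observation here. Finally, you spell out the truncation-limit argument identifying $\bar{\sigma F}$ with $\bar F\circ\tilde\sigma$, which the paper leaves implicit since it repeats the Case~2 pattern of \cref{one_var_for_another}; that is a welcome addition but not a genuine difference.
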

	
\begin{proof} 
	A first computation (left to the reader) shows that, for $\beta \in \NN^{n}$, we have $$(\sigma(Y))^\beta = \sum_{\gamma \in \NN^{n'}} H_{\beta,\gamma}(X') (Y')^\gamma,$$  where $$H_{\beta,\gamma}(X') = \sum_{\gamma^1 + \cdots + \gamma^n = \gamma} \left(\prod_{j=1}^n \left(\sum_{\delta^1 + \cdots + \delta^{\beta_j} = \gamma^j} \left(\prod_{p=1}^{\beta_j} G_{j,\delta^p}(X')\right)\right)\right)$$  and each $\gamma^j$ and $\delta^p$ belongs to $\NN^{n'}$ and is nonzero.  Therefore, $$\sigma F(X,X',Y') = \sum_\beta F_\beta(X) \sigma(Y)^\beta = \sum_\gamma \left(\sum_\beta F_\beta(X) H_{\beta,\gamma}(X')\right) (Y')^\gamma.$$  By the above, we have
	\begin{equation} \label{f_beta_norm}
		\|F_\beta\|_\eta \le \frac{\|F\|_{\tau,\rho}}{\rho^\beta}
	\end{equation}
	for each $\beta$, and, setting $\tilde\rho:= 2^{n'+1}\rho'$, 
	\begin{equation}\label{g_gamma_norm}
		\|G_{j,\gamma}\|_{\eta} \le \frac{\|\sigma(Y_j)\|_{\tau',\tilde\rho}}{(\tilde\rho)^\gamma} \le \frac{\rho_j/2}{(\tilde\rho)^\gamma}
	\end{equation}
	for each $j$ and $\gamma$.  Therefore, denoting by $N(\gamma,k)$ the number of ways to write $\gamma$ as the sum of $k$ many nonzero elements in $\NN^{n'}$, we get for each $\beta$ and $\gamma$ from \cref{combinatorics}(3) that
	\begin{align*}
		\|H_{\beta,\gamma}\|_{\eta} & \le \sum_{\gamma^1 + \cdots + \gamma^n = \gamma} \left(\prod_{j=1}^n \left(\sum_{\delta^1 + \cdots + \delta^{\beta_j} = \gamma^j} \left(\prod_{p=1}^{\beta_j} \frac{\rho_j/2}{(\tilde\rho)^{\delta^p}}\right)\right)\right) \\
		&= \sum_{\gamma^1 + \cdots + \gamma^n = \gamma} \left(\prod_{j=1}^n \left(\sum_{\delta^1 + \cdots + \delta^{\beta_j} = \gamma^j} \frac{(\rho_j/2)^{\beta_j}}{(\tilde\rho)^{\gamma^j}}\right)\right) \\
		&= \sum_{\gamma^1 + \cdots + \gamma^n = \gamma} \left(\prod_{j=1}^n \left( \frac{(\rho_j/2)^{\beta_j}}{(\tilde\rho)^{\gamma^j}} N\left(\gamma^j,\beta_j\right) \right)\right) \\
		&= \sum_{\gamma^1 + \cdots + \gamma^n = \gamma} \left(\frac{(\rho/2)^{\beta}}{(\tilde\rho)^{\gamma}} \left(\prod_{j=1}^n N\left(\gamma^j,\beta_j\right)\right)\right) \\
		&= \frac{(\rho/2)^{\beta}}{(\tilde\rho)^{\gamma}} N\left(\gamma,\sum\beta\right). \\
	\end{align*}
	Also, since $N\left(\gamma, k\right) = 0$ for $k > \sum\gamma$, it follows from \cref{combinatorics}(2) that 
	\begin{equation}\label{H_beta_gamma_norm}
		\|H_{\beta,\gamma}\|_{\eta} \le 2^{n' \cdot \sum\gamma} \cdot  \frac{(\rho/2)^{\beta}}{(\tilde\rho)^{\gamma}}.
	\end{equation}
	So for each $\gamma$, we get
	\begin{equation*}
		\left\|\sum_\beta (F_\beta H_{\beta,\gamma}) \right\|_{\eta} 
		\le \|F\|_{\tau,\rho} \frac{2^{n'\cdot\sum\gamma}}{(\tilde\rho)^\gamma} \cdot \sum_\beta \left(\frac12\right)^{\sum\beta}.
	\end{equation*}
	However, since $\sum_\beta \left(\frac12\right)^{\sum\beta} = \sum_{k=0}^\infty \left(\sum_{\sum\beta = k} 1\right) \left(\frac12\right)^k \le C:= \sum_k \frac{k^n}{2^k}$ by \cref{combinatorics}(1), we conclude that 
	\begin{equation*}
		\left\|\sum_\beta (F_\beta H_{\beta,\gamma}) \right\|_{\eta} 
		\le C \cdot \|F\|_{\tau,\rho} \frac{2^{n'\cdot\sum\gamma}}{(\tilde\rho)^\gamma},
	\end{equation*}
	where $C>0$ is an absolute constant.  Finally, since $(\tilde\rho)^\gamma = 2^{(n'+1)\sum\gamma}\cdot (\rho')^\gamma$, we obtain 
	\begin{equation}\label{H_gamma_norm}
		\left\|\sum_\beta (F_\beta H_{\beta,\gamma}) \right\|_{\eta} 
		\le \frac{C \cdot \|F\|_{\tau,\rho}}{2^{\sum\gamma}\cdot (\rho')^\gamma}\ .
	\end{equation}
	Multiplying by $(\rho')^\gamma$ and summing over $\gamma$ therefore yields
	$$\|\sigma F\|_{\eta,\rho'} \le C \cdot \|F\|_{\tau,\rho} \left(\sum_\gamma \left(\frac12\right)^{\sum\gamma}\right) \le C D \|F\|_{\tau,\rho}$$
	for some absolute constant $D>0$, as required.
\end{proof}

\section{Closure properties and o-minimality}  \label{o-min_chapter}

The goal of this section is to verify, for those mixed series of \cref{mixed_section} \textit{with only real coefficients}, the closure properties listed in Paragraphs 1.8 and 1.15 of \cite{MR3349791}.  We will adopt the notations of the latter, and we need to define the real algebras $\A_{m,n,r}$.  So let $m,n \in \NN$, and let $r = (s,t) = (s_1, \dots, s_m, t_1, \dots, t_n) \in (0,\infty)^{m+n}$ be a polyradius of type $(m,n)$.  (While we used the letter $R$ for polyradii in the previous sections to mirror notations  in \cite{Dries:2000mx}, we use the letters $r$, $s$ and $t$ to mirror corresponding notations in \cite{MR3349791}.)
We set $$\T_m^s := \set{\tau = (K,R,\rho,\theta, \Delta) \in \T_m:\ R > s}$$ and define the $\RR$-algebras $$\Pc{R}{X^*,Y}_r:= \bigcup_{\tau \in \T_m^s \atop u > t} \Pc{C}{X^*,Y}_{\tau,u} \cap \Ps{R}{X^*,Y}$$ and $$\bar\A_{m,n,r} := \bigcup_{\tau \in \T_m^s} \set{\bar F \rest{(-\infty,s) \times D(t)}:\ F \in \Pc{C}{X^*,Y}_{\tau,t} \cap \Ps{R}{X^*,Y}}.$$  Recall from \cite[Notation 1.7]{MR3349791} the following definitions:
\begin{align*}
	I_{m,n,r} & :=(0,s_{1})\times\cdots\times(0,s_{m})\times(-t_{1},t_{1})\times\cdots\times(-t_{n},t_{n})\\
	\hat{I}_{m,n,r} & :=[0,s_{1})\times\cdots\times[0,s_{m})\times\left(-t_{1},t_{1}\right)\times\cdots\times\left(-t_{n},t_{n}\right).
\end{align*}
In particular,  each $\bar f \in \bar\A_{m,n,r}$ defines a continuous function $f:\hat I_{m,n,r} \into \RR$ by setting $$f(x,y):= \bar f(\log x,y);$$ this $f$ is real analytic on $I_{m,n,r}$.  We let $\A_{m,n,r}$ be the $\RR$-algebra of all such functions obtained from $\Pc{R}{X^*,Y}_r$, and we define the $\RR$-algebra homomorphism $T_{m,n,r}:\A_{m,n,r} \into \Pc{R}{X^*,Y}_r$ by letting $T_{m,n,r}f$ be the (by quasianalyticity) unique $F \in \Pc{R}{X^*,Y}_r$ such that $f(x,y) = \bar F(\log x,y)$.
We leave it to the reader to verify Properties (1)--(5) and (8) of \cite[Paragraph 1.8]{MR3349791}.  Properties (6) and (7) follow from \cref{mixed_directed_lemma}(4,5).

Let $\{\A_{m,n}:\ m,n \in \NN\}$ be the corresponding family of algebras of germs, as defined in \cite[Section 1.2]{MR3349791}.  By \cref{translation_prop}, every $f \in \A_{m,n,r}$ is $\A$-analytic, as defined in \cite[Definition 1.10]{MR3349791}.

It now remains to verify the properties listed in \cite[Paragraph 1.15]{MR3349791} for the corresponding family $\A$ of algebras of germs.  Property (1) there is obvious here; Property (3) follows from \cref{permutation_lemma}; Property (5) follows from \cref{infinitesimal_prop}; and Property (6) follows from \cref{weierstrass}.  The remaining properties are handled below; we fix arbitrary $\tau = (K,R,r,\theta,\Delta) \in \T_m$ and $\rho \in (0,\infty)^n$.

\subsection*{Monomial divison}  
Let $F \in \Pc{C}{X^*,Y}_{\tau,\rho}$.

First, we  let $\alpha > 0$ and assume that $F = X_1^\alpha G$ with $G \in \Ps{C}{X^*,Y}$.  We write $F = \sum_{\beta \in \NN^n} F_\beta(X) Y^n$ and $G = \sum_{\beta \in \NN} G_\beta(X)Y^n$ with each $F_\beta \in \Pc{C}{X^*}_\tau$ and each $G_\beta \in \Ps{C}{X^*}$, and we set $\tau':= (K,R,s,\theta,\Delta)$ for some fixed (but arbitrary) $s \in (1,r)$.  Then, by \cref{permutation_lemma} and \cref{mon_div_lemma}, there exist $C>0$ (depending only on $\frac sr$)  such that $\|G_\beta\|_\tau' \le C\|F_\beta\|_\tau$, for each $\beta$.  It follows that $\sum_\beta \|G_\beta\|_{\tau'} \rho^n \le C \|F\|_{\tau,\rho}$, so that $G \in \Pc{C}{X^*,Y}_{\tau',\rho}$.

Second, we let $n \in \NN$ and assume that $F = Y_1^n G$ with $G \in \Ps{C}{X^*,Y}$.  Then by definition, we have $\|F\|_{\tau,\rho} = \rho_1^n \|G\|_{\tau,\rho}$, so that $G \in \Pc{C}{X^*,Y}_{\tau,\rho}$.

Putting together the two cases discussed here proves Property (2) of \cite[Paragraph 1.15]{MR3349791}.
	
\subsection*{Setting a variable equal to 0}  
Let $F \in \Pc{C}{X^*,Y}_{\tau,\rho}$, and write $F = \sum_{\beta \in \NN^n} F_\beta(X) Y^n$ with each $F_\beta \in \Pc{C}{X^*}_\tau$.  Applying \cref{fixing_lemma} with $\nu = 1$ and $a = -\infty = \log 0$ followed by \cref{permutation_lemma}, we get that $F_{\beta,0}:= F_\beta(X_1, \dots, X_{n-1},0) \in \Pc{C}{(X_1, \dots, X_{n-1})^*,Y}_{\mu(a)}$ with $\|F_{\beta,0}\|_{\mu(a)} \le \|F_\beta\|_\tau$, for each $\beta \in \NN^n$.  Hence $F_0:= F(X_1, \dots, X_{n-1},0,Y)$ belongs to $\Pc{C}{(X_1, \dots, X_{n-1})^*,Y}_{\mu(a),\rho}$; this proves Property (4) of \cite[Paragraph 1.15]{MR3349791}.

\subsection*{Blow-up charts}
Here we refer to the blow-up charts (1)--(5) of \cite[Definition 1.13]{MR3349791}.  Closure under blow-up charts (1) (regular blow-ups) is proved by \cref{blow-up_prop}; closure under blow-up charts (2) (singular blow-ups) is similar, but easier and left to the reader.  Blow-up charts (3) are infinitesimal substitutions and thus are handled by \cref{infinitesimal_prop}.  For blow-up charts (4), note that $\Pc{C}{X^*,Y}_\G \subseteq \Pc{C}{(X_1, \dots, X_{m+1})^*,Y}_\G$ by \cref{mixed_directed_lemma}(6); so closure under these blow-up charts follows from closure under blow-up charts (2) and \cref{infinitesimal_prop}.  Closure under blow-up charts (5) follows again from \cref{infinitesimal_prop}.  This proves Property (7) of \cite[Paragraph 1.15]{MR3349791}.\medskip

As the results of \cite{MR3349791} do not make use of the Weierstrass Preparation Theorem (which is in general not available in the quasianalytic setting), we may dispense with proving this property here.

\begin{proof}[Proof of Main Theorem]
	The previous discussion implies that our system $\A$ of algebras satisfies Conditions (1) and (4) of \cite[Proviso 1.20]{MR3349791}.  Moreover, Condition (2) is implied by \cref{translation_prop}, while Condition (3) follows from \cref{multi_algebra_prop}.  So the theorem follows from \cite[Theorems A and B]{MR3349791}.  Finally, note that it suffices to add the reciprocal function to obtain quantifier elimination, as all real powers with nonnegative exponents are already in the language $\la_{\G^*}$.
\end{proof}

%\newpage

%\bibliography{mylibrary}
%\bibliographystyle{mybibstyle}

\end{document}